\DeclareSymbolFont{cyrletters}{OT2}{wncyr}{m}{n}
\DeclareMathSymbol{\Sha}{\mathalpha}{cyrletters}{"58}
\let\@wraptoccontribs\wraptoccontribs
\newtheorem{thm}[equation]{Theorem}
\newtheorem{lem}[equation]{Lemma}
\newtheorem*{conjecture*}{Conjecture}
\newtheorem{cor}[equation]{Corollary}
\newtheorem{prop}[equation]{Proposition}
\newtheorem{conj}[equation]{Conjecture}
\theoremstyle{definition}
\newtheorem{rem}[equation]{Remark}
\newtheorem{defn}[equation]{Definition}
\newcommand{\rank}{\mathrm{rank}}
\numberwithin{equation}{section}
\def\Z{\mathbb{Z}}
\def\F{\mathbb{F}}
\def\A{\mathcal{A}}
\def\O{\mathcal{O}}
\def\P{\mathcal{P}}
\def\I{\mathcal{I}}
\def\cC{\mathcal{C}}
\def\p{\mathfrak{p}}
\def\q{\mathfrak{q}}
\def\Hom{\mathrm{Hom}}
\def\Gal{\mathrm{Gal}}
\def\Res{\mathrm{Res}}
\def\End{\mathrm{End}}
\def\Frob{\mathrm{Fr}}
\def\image{\mathrm{image}}
\def\Sel{\mathrm{Sel}}
\def\loc{\mathrm{loc}}
\def\cond{\mathrm{cond}}
\def\ur{\mathrm{ur}}
\def\GL{\mathrm{GL}}
\def\map#1{\;\xrightarrow{#1}\;}
\def\bmu{\boldsymbol{\mu}}
\def\too{\longrightarrow}
\def\dirsum#1{\underset{#1}{\textstyle\bigoplus}}
\def\Hu{H^1_\ur}
\title{$p$-twisted Selmer near-companion curves}
\author{Minseok Kim}
\address{Department of Mathematics, Yonsei University, Seoul 03722, Republic of Korea}
\email{\href{mailto:kms727@yonsei.ac.kr}{minseokkim.math@gmail.com}}
\begin{document}

\begin{abstract}
Let $E_1$ and $E_2$ be elliptic curves over a number field $K$. In \cite{scc}, Mazur and Rubin define the concept of $n$-Selmer near-companions and conjecture that if $E_1$ and $E_2$ are $n$-Selmer near-companions over $K$, then $E_1[n]$ is $G_K$-isomorphic to $E_2[n]$.
In \cite{2snc}, Yu proves the conjecture on $n$-Selmer near-companion curves in the case $n=2$. We introduce the notion of $p$-twisted Selmer near-companions ($p$-TSNC) over $K$ and prove that if $E_1$ and $E_2$ are $p$-TSNC over $K$, then $K(E_1[p])=K(E_2[p])$ under certain conditions.
\end{abstract}

\maketitle
\section{Introduction}

Let $p$ be a prime, and let $\F_p$ be the finite field with $p$-elements. For a $\F_p$-vector space $V$, we write $\dim_p(V)$ for the dimension of $V$ over $\F_p$.
For a field $k$, denote by $G(F/k)$ the Galois group of field extension $F/k$ and by $G_k$ the absolute Galois group.

Selmer groups of elliptic curves over number fields are fundamental objects in arithmetic geometry, offering deep insights into the Mordell–Weil group, the Birch and Swinnerton-Dyer conjecture, and Iwasawa theory. In \cite{scc}, Mazur and Rubin introduced the concept of Selmer near-companion curves, formalizing the idea that certain pairs of elliptic curves exhibit similar behavior in their Selmer groups when twisted by quadratic characters. They conjectured that if two elliptic curves $E_1$ and $E_2$ are 
$p$-Selmer near-companions, then $E_1[p]$ and $E_2[p]$ should be isomorphic as $G_K$-modules.

\begin{defn}
    Let $p$ be a prime number. Let $E_1$ and $E_2$ be elliptic curves over a number $K$. We say $E_1$ and $E_2$ are {\em $p$-Selmer near-companions} over $K$ if there exists a constant $C := C(E_1,{E_2},K)$ such that for every $\chi \in \Hom(G_K, \{\pm 1\}),$ $$\ | \dim_{p} \Sel_{p}(E_2^\chi/K) - \dim_{p} \Sel_{p}(E_1^\chi/K) | < C.$$ 
\end{defn}

\begin{conj}{\cite[Conjecture 7.15]{scc}}
    If $E_1$ and ${E_2}$ are $p$-Selmer near companions over $K$, then there exists a $G_K$-module isomorphism $E_1[p] \cong {E_2}[p]$.
\end{conj}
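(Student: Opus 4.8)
The plan is to prove the contrapositive: if $E_1[p]$ and $E_2[p]$ are \emph{not} isomorphic as $G_K$-modules, then for every constant $C$ there is a quadratic character $\chi \in \Hom(G_K,\{\pm1\})$ with $|\dim_p \Sel_p(E_2^\chi/K) - \dim_p \Sel_p(E_1^\chi/K)| \ge C$, so $E_1$ and $E_2$ are not $p$-Selmer near-companions. The first step is to rewrite each twisted Selmer group as a Selmer group of a fixed Galois module in the sense of Mazur--Rubin: for $p$ odd one has $E_i^\chi[p]\cong E_i[p]\otimes\chi$ as $G_K$-modules, and in all cases $\Sel_p(E_i^\chi/K) = H^1_{\mathcal{F}_i(\chi)}(K,E_i[p])$ for a self-dual Selmer structure $\mathcal{F}_i(\chi)$ whose local condition at a place $v$ coincides with the unramified (finite) condition away from $p$, the conductor of $E_i$, and the conductor of $\chi$. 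The main bookkeeping tool is then the Greenberg--Wiles global Euler-characteristic formula together with Poitou--Tate duality, which with $T=E_i[p]$ self-dual reads $\dim_p H^1_{\mathcal{F}}(K,T) - \dim_p H^1_{\mathcal{F}^*}(K,T) = \sum_v\bigl(\dim_p \mathcal{F}_v - \dim_p H^0(K_v,T)\bigr) + \dim_p H^0(K,T)$, so that differences of Selmer dimensions are reduced to purely local quantities plus a dual-Selmer correction term.

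Next I would analyze the effect of a single new twist. Fix a prime $q$ of good reduction for both curves with $q\nmid p$ and $q$ coprime to the conductor of $\chi$, and let $\chi_q$ be the quadratic character ramified exactly at $q$. Passing from $\chi$ to $\chi\chi_q$ changes $\mathcal{F}_i(\chi)$ only at $q$ (from the unramified line to the transverse line), and the induced change in $\dim_p\Sel_p(E_i^{\chi\chi_q}/K)$ is governed by local invariants at $q$ that depend only on the action of $\Frob_q$ on $E_i[p]$ — essentially on $\dim_p E_i[p]/(\Frob_q-1)E_i[p]$ and on the image of the global restriction map to $H^1(K_q,E_i[p])$. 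Here Chebotarev enters: since $E_1[p]\not\cong E_2[p]$, by Brauer--Nesbitt their semisimplifications either have distinct Brauer characters, in which case there is a conjugacy class in $\Gal(K(E_1[p],E_2[p])/K)$ on which the characteristic polynomials of Frobenius on the two modules disagree; or the semisimplifications agree and one must instead separate the two nontrivial extension classes in $\mathrm{Ext}^1_{G_K}$. In either case the goal is to produce a positive-density set $S$ of primes $q$ for which the per-twist change in the local term, hence (after the dual side is controlled) in $\dim_p\Sel_p$, is provably different for $E_1$ and $E_2$.

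Finally I would iterate. Choosing primes $q_1,\dots,q_t \in S$ inductively — each new $q_j$ selected so that the restriction maps stay in ``general position'' and the dual Selmer group $H^1_{\mathcal{F}^*}(K,E_i[p])$ does not grow — one arranges $\dim_p \Sel_p\bigl(E_i^{\chi\cdot\prod_j\chi_{q_j}}/K\bigr) = c_i\, t + O(1)$ with constants $c_1\ne c_2$ coming from the discriminating Chebotarev class. Then $|\dim_p\Sel_p(E_2^{\chi'}/K)-\dim_p\Sel_p(E_1^{\chi'}/K)| = |c_2-c_1|\,t + O(1)$ along $\chi' = \chi\cdot\prod_j\chi_{q_j}$, which exceeds any prescribed $C$ for $t$ large, giving the contradiction.

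The main obstacle is twofold. The softer difficulty is the inductive control of the dual Selmer group: making the local terms translate into \emph{monotone} growth of $\dim_p\Sel_p$ rather than being absorbed by $H^1_{\mathcal{F}^*}$ requires a Kolyvagin-system-style ``useful prime'' argument, and this is where hypotheses on the images $\Gal(K(E_i[p])/K)\subseteq\Aut(E_i[p])$ (large image, or absence of small $G_K$-submodules of $E_i[p]$) are likely unavoidable — which is presumably why the theorem in this paper is conditional and is phrased through the intermediate notion of $p$-twisted Selmer near-companions and through the field equality $K(E_1[p])=K(E_2[p])$ rather than a full module isomorphism. The genuinely hard part is the non-semisimple case: when $E_1[p]^{\mathrm{ss}}\cong E_2[p]^{\mathrm{ss}}$ but $E_1[p]\not\cong E_2[p]$, Frobenius characteristic polynomials cannot detect the difference, so one must find a Galois-cohomological invariant of $E_i[p]$ that is both computable from local data at a positive density of primes and sensitive enough to shift the twisted Selmer dimension — and it is exactly at this point that the general conjecture remains delicate.
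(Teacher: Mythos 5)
The statement you were asked to prove is not a theorem of this paper: it is Mazur--Rubin's Conjecture 7.15 of \cite{scc}, restated here for $n=p$, and it remains open. The paper never claims to prove it; instead it introduces the \emph{different} notion of $p$-twisted Selmer near-companions (twists by all $\chi\in\Hom(G_K,\mu_p)$, not just quadratic characters) and proves, under auxiliary hypotheses, only the \emph{field} equality $K(E_1[p])=K(E_2[p])$ (Theorem \ref{theorem}) — strictly weaker than a $G_K$-module isomorphism $E_1[p]\cong E_2[p]$. So there is no paper proof of this statement against which to compare yours.

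Your proposal is a fair sketch of the Klagsbrun--Mazur--Rubin machinery (quadratic twists changing the local condition at one prime at a time, Poitou--Tate and Greenberg--Wiles, Chebotarev in $\Gal(K(E_1[p],E_2[p])/K)$), and to your credit you locate precisely where it breaks: (a) converting local-condition changes into monotone growth of $\dim_p\Sel_p$ rather than having them absorbed by the dual Selmer group needs ``large image''-type hypotheses that do not follow from $E_1[p]\not\cong E_2[p]$ alone; and (b) in the non-semisimple case, where $E_1[p]^{\mathrm{ss}}\cong E_2[p]^{\mathrm{ss}}$ yet $E_1[p]\not\cong E_2[p]$, no Chebotarev-accessible (conjugacy-class-level) invariant is known to separate the two modules, so the argument has no target. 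You propose no way to close either gap, so this is a description of an obstruction, not a proof — which is the correct conclusion, since the conjecture is open. Note that the paper's actual strategy is exactly the workaround your diagnosis suggests would be needed: passing to $\mu_p$-valued twists makes the local condition $\gamma_\q(\chi_\q)$ two-dimensional at $\q\in\P_2$ (Lemma \ref{4.6}), so one can choose $\q\in\P_2(E_1)\cap\P_0(E_2)$ to force a rank-$2$ gain for $E_1$ and no change for $E_2$ (e.g.\ Proposition \ref{4.3} or Theorem \ref{5ththm}), and replacing the module isomorphism by the field invariant $K(E_i[p])$ yields a target Chebotarev can see — both at the cost of a stronger hypothesis and a weaker conclusion.
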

(The original conjecture is about $n$, but for simplicity, we only consider the case when $n=p$.)

    Mazur and Rubin proved the converse of this conjecture, requiring a stronger assumption specifically when $p=2$ or $p=3$, as follows.

\begin{thm}{\cite[Theorem 7.12]{scc}}
    Let $E_1$ and $E_2$ be elliptic curves over a number field $K$. Let $n_p=p^2$ if $p=2$ or $3$, and $n_p=p$ if $p>3$. If $E_1[n_p]\cong E_2[n_p]$ as $G_K$-modules, then $E_1$ and $E_2$ are $p$-Selmer near companions over $K$.
\end{thm}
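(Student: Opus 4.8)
The plan is to show that $E_1[n_p]\cong E_2[n_p]$ forces, for every $\chi\in\Hom(G_K,\{\pm1\})$, the groups $\Sel_p(E_1^\chi/K)$ and $\Sel_p(E_2^\chi/K)$ to be cut out of one and the same cohomology group by local conditions that differ at only a $\chi$-independent finite set of places, of $\chi$-independent bounded total size. First I record that $E_1[n_p]\cong E_2[n_p]$ gives in particular a $G_K$-isomorphism $\phi\colon E_1[p]\cong E_2[p]$. Since $\chi^2=1$, tensoring with $\chi$ yields $\phi_\chi\colon E_1^\chi[p]=E_1[p]\otimes\chi\cong E_2[p]\otimes\chi=E_2^\chi[p]$, hence compatible isomorphisms on $H^1(K,-)$ and on every $H^i(K_v,-)$. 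Writing $M_\chi$ for this common module, $\Sel_p(E_1^\chi/K)$ and $\Sel_p(E_2^\chi/K)$ become two subgroups of $H^1(K,M_\chi)$, each defined by the local Kummer conditions $\mathcal{L}_v^{(i)}:=\image\!\big(E_i^\chi(K_v)/pE_i^\chi(K_v)\to H^1(K_v,M_\chi)\big)$.

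I would then apply the standard comparison principle for Selmer structures: if two families of local conditions on a finite $G_K$-module $M$ agree at all places outside a finite set $\Sigma$, then the two Selmer dimensions differ by at most $\sum_{v\in\Sigma}\dim_p H^1(K_v,M)$ (via the intermediate Selmer structure with local conditions $\mathcal{L}_v^{(1)}\cap\mathcal{L}_v^{(2)}$ at $v\in\Sigma$, which sits below both and within the stated distance of each). So it suffices to produce a finite set $\Sigma=\Sigma(E_1,E_2,K)$, \emph{independent of $\chi$}, with $\phi_{\chi,v}(\mathcal{L}_v^{(1)})=\mathcal{L}_v^{(2)}$ for all $v\notin\Sigma$, together with the observation that $\dim_p H^1(K_v,M_\chi)$ is bounded uniformly in $\chi$ for each fixed $v$: the Weil pairing gives $M_\chi^{*}(1)\cong M_\chi$, so local Tate duality and the local Euler characteristic formula yield $\dim_p H^1(K_v,M_\chi)\le 4$ for $v\nmid p\infty$, $\le 4+2[K_v:\Q_p]$ for $v\mid p$, and $\le 2$ for $v\mid\infty$. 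Then $\sum_{v\in\Sigma}\dim_p H^1(K_v,M_\chi)$ is a constant depending only on $E_1,E_2,K$, which is exactly the statement that $E_1$ and $E_2$ are $p$-Selmer near-companions.

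The crux is therefore the local statement: taking $\Sigma$ to consist of the archimedean places, the places above $p$, and the places of bad reduction of $E_1$ or $E_2$, the subgroup $\mathcal{L}_v(E^\chi/K_v)\subseteq H^1(K_v,E[p]\otimes\chi)$ depends, for $v\notin\Sigma$, only on $\chi|_{G_{K_v}}$ and on the isomorphism class of $E[n_p]$ as a $G_{K_v}$-module. When $\chi|_{G_{K_v}}$ is unramified this is immediate, since then $E^\chi$ has good reduction at $v$ and $\mathcal{L}_v(E^\chi/K_v)=H^1_{\ur}(K_v,M_\chi)$, which the $G_{K_v}$-equivariant map $\phi_{\chi,v}$ preserves; in particular $\phi_{\chi,v}(\mathcal{L}_v^{(1)})=\mathcal{L}_v^{(2)}$ there. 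When $\chi|_{G_{K_v}}$ is ramified, $E^\chi$ acquires good reduction only over the tamely ramified quadratic extension $L_v/K_v$ cut out by $\chi$, and one must describe the image of the Kummer map of this potentially good curve in terms of $H^1_{\ur}(L_v,M_\chi)$ and the $G_{K_v}$-action.

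This is exactly where the level $n_p$ enters. One needs level at least $3$ merely to recover the reduction type from the Galois module (N\'eron--Ogg--Shafarevich), and for $p=2$ and $p=3$ one works at level $p^2$ — genuinely necessary when $p=2$, where $[L_v:K_v]=2$ is not prime to $p$, so restriction to $L_v$ fails to be injective on $p$-power cohomology and the finer structure of the Kummer image is not visible modulo $p$ — whereas for $p>3$ level $p$ already suffices, which accounts for the shape of the exponents $n_p$. I expect this local lemma, making precise the dependence of $\mathcal{L}_v(E^\chi/K_v)$ on $E[n_p]|_{G_{K_v}}$ under ramified quadratic twisting, uniformly over $v\notin\Sigma$, to be the main obstacle; here I would follow \cite{scc} closely, the remaining steps being formal manipulations with Poitou--Tate duality and local Euler characteristics.
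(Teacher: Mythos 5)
The paper does not actually prove this theorem: it is quoted in the introduction as a result of Mazur--Rubin, with \cite{scc} as the source, so there is no ``paper's own proof'' for me to compare against. The closest thing the paper supplies is the proof of Theorem~\ref{tsncthm} (the $p$-TSNC analogue, for twists by $\mu_p$-valued characters with $p\ge 3$), and your outline mirrors that proof's structure exactly: fix a finite set $S$ of bad places independent of the twist; show $\gamma_v^1(\chi_v)=\gamma_v^2(\chi_v)$ for $v\notin S$ (unramified and ramified cases separately, the ramified case via \cite[Lemma~7.4]{ds} plus a dimension count); conclude that the two Selmer groups sit inside the same relaxed Selmer group $\Sel_p(\cdot/K,\chi)^S$ with quotient controlled by $\prod_{v\in S}|H^1(K_v,E_1[p])|$, a $\chi$-independent constant. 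Your formal reductions (restricting $\phi$ to $p$-torsion, twisting by $\chi$, the intermediate-Selmer-structure comparison, and the Euler-characteristic bounds on $\dim_p H^1(K_v,M_\chi)$) are all sound.

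The gap is that you do not actually establish the one non-trivial step, namely the local lemma at $v\notin\Sigma$ with $\chi_v$ ramified: that $\mathcal{L}_v(E^\chi/K_v)$ is determined by $E[n_p]$ as a $G_{K_v}$-module. You say ``here I would follow \cite{scc} closely,'' which turns the proposal into a roadmap that defers the theorem's entire content back to its source. Furthermore, your heuristic for the exponents $n_p$ is not quite right: the N\'eron--Ogg--Shafarevich remark is off-target, since at $v\notin\Sigma$ good reduction is an assumption built into the choice of $\Sigma$, not something to be recovered from the Galois module; and the observation that $[L_v:K_v]=2$ is not prime to $p$ only explains $p=2$. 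For $p=3$ the quadratic extension $L_v/K_v$ is tamely ramified and restriction is injective, yet Mazur--Rubin still require $n_3=9$; your account does not address that case, which is in fact the more delicate one. A complete proof would need to identify $\gamma_v(\chi_v)$ with $H^1(L_v/K_v,\,E^\chi(L_v)[p^\infty])$ and show that the relevant piece of $E(L_v)[p^\infty]$ is controlled by $E[n_p]|_{G_{K_v}}$; that is where the distinction between $n_p=p$ and $n_p=p^2$ actually appears.
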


Yu proved the conjecture for $p=2$ \cite[Theorem 2.1]{2snc}.

\begin{thm}
    Let $E_1$ and $E_2$ be elliptic curves over a number field $K$. If they are $2$-Selmer near companions over $K$, then there is a $G_K$-module isomorphism $E_1[2] \cong {E_2}[2]$.
\end{thm}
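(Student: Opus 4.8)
The plan is to prove the contrapositive: assuming $E_1[2]\not\cong E_2[2]$ as $G_K$-modules, I will produce, for every constant $C$, a character $\chi\in\Hom(G_K,\{\pm1\})$ with $|\dim_2\Sel_2(E_1^\chi/K)-\dim_2\Sel_2(E_2^\chi/K)|\ge C$. First fix the setup. Twisting by a quadratic character is trivial modulo $2$, so $E_i^\chi[2]\cong E_i[2]$ as $G_K$-modules, and $\Sel_2(E_i^\chi/K)$ is the subgroup of $H^1(K,E_i[2])$ cut out by local conditions $\L_v\subseteq H^1(K_v,E_i[2])$ which, for $v\nmid 2\infty\cdot\cond(E_i)$ lying outside the ramification set of $\chi$, equal the unramified subspace $H^1_\ur(K_v,E_i[2])$ and hence do not depend on $\chi$; the finitely many $v\mid 2\infty\cdot\cond(E_i)$ contribute only a bounded perturbation to the Selmer dimension as $\chi$ varies, so they may be discarded. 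Since $\Aut(E_i[2])=\GL_2(\F_2)\cong S_3$, the $G_K$-module $E_i[2]$ is determined by the conjugacy class of the homomorphism $\rho_i\colon G_K\to S_3$ it induces, and a one-line check on the subgroups of $S_3$ (every automorphism of a subgroup $H\le S_3$ is induced by conjugation in $S_3$, and isomorphic subgroups are conjugate) shows this class is determined by $\ker\rho_i=G_{L_i}$ alone, where $L_i:=K(E_i[2])$. Hence $E_1[2]\cong E_2[2]\iff L_1=L_2$, and we may assume $L_1\ne L_2$.

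Next isolate the local mechanism. For a prime $q$ of $K$ that is odd, of good reduction for $E_1$ and $E_2$, and unramified in $L_1L_2$, set $d_q^{(i)}:=\dim H^0(K_q,E_i[2])=\dim E_i(K_q)[2]$; this is $2$, $1$, or $0$ according as $\Frob_q$ has order $1$, $2$, or $3$ in $G(L_i/K)$. By the local Euler characteristic formula and Tate duality (using the Weil pairing to identify $E_i[2]$ with its Cartier dual), $\dim H^1(K_q,E_i[2])=2d_q^{(i)}$ and the local condition is a maximal isotropic subspace, of dimension $d_q^{(i)}$. Consequently, if $\psi_q$ is a quadratic character ramified exactly at $q$, replacing $\chi$ by $\chi\psi_q$ alters only the local condition at $q$ and changes $\dim_2\Sel_2(E_i^\chi/K)$ by at most $d_q^{(i)}$ --- and not at all when $d_q^{(i)}=0$. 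In the opposite direction, the Mazur--Rubin twisting method supplies, relative to the Selmer and dual Selmer groups of the current $E_i^\chi$, a positive-density set of admissible $q$ for which $\chi\mapsto\chi\psi_q$ \emph{raises} $\dim_2\Sel_2(E_i^\chi/K)$ by exactly $d_q^{(i)}$; iterating $t$ times yields a twist with $\dim_2\Sel_2(E_i^\chi/K)\ge\dim_2\Sel_2(E_i/K)+t\,d_q^{(i)}$.

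Now force the divergence. Since $L_1\ne L_2$, after relabeling $L_1\not\subseteq L_2$, i.e. $G(L_1L_2/L_2)\not\subseteq G(L_1L_2/L_1)$; choose $\sigma$ in the difference. By the Chebotarev density theorem the primes $q$ (admissible as above) with $\Frob_q$ conjugate to $\sigma$ in $G(L_1L_2/K)$ have positive density, and for them $d_q^{(2)}=2$ while $d_q^{(1)}\le1$. Building $\chi=\psi_{q_1}\cdots\psi_{q_t}$ by repeatedly adjoining primes that lie simultaneously in this class and in the Mazur--Rubin growth class for the current $E_2^\chi$, we get $\dim_2\Sel_2(E_2^\chi/K)\ge\dim_2\Sel_2(E_2/K)+2t$ whereas $\dim_2\Sel_2(E_1^\chi/K)\le\dim_2\Sel_2(E_1/K)+t$, so $|\dim_2\Sel_2(E_1^\chi/K)-\dim_2\Sel_2(E_2^\chi/K)|\ge t-O(1)\to\infty$, contradicting the $2$-Selmer near-companion hypothesis. (In the favorable subcases where one can arrange $d_q^{(1)}=0$, one need only note that $\Sel_2(E_1^\chi/K)=\Sel_2(E_1/K)$ is literally unchanged at each step.)

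The step I expect to be the genuine obstacle is the simultaneity in the last paragraph: at each stage the added prime must lie both in the fixed Chebotarev class in $G(L_1L_2/K)$ that keeps $d_q^{(1)}<d_q^{(2)}$ \emph{and} in the Mazur--Rubin class --- a condition living in the much larger field $K\big(E_2[2],\mu_{2^\infty},\text{Selmer cocycles of }E_2^\chi,\dots\big)$ --- that forces the maximal jump for $E_2$; one must verify that $L_1$ does not embed into this larger field in a way that empties the combined Chebotarev class, equivalently that the local conditions imposed at the new primes stay generic on the $E_1$-side. Carrying this out uniformly over all configurations of the pair $\big(G(L_1/K),G(L_2/K)\big)\subseteq S_3\times S_3$ --- most delicately when one curve has a rational $2$-torsion point, so $d_q^{(i)}\ge1$ for every $q$ and the whole divergence must be squeezed out of the unit gap $d_q^{(2)}-d_q^{(1)}\ge1$ rather than from a vanishing local cohomology group --- is where the real work lies, and where I expect to need the full Poitou--Tate and Greenberg--Wiles formalism rather than the crude inequalities used above.
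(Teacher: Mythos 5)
Your reduction to showing $K(E_1[2]) = K(E_2[2])$ is correct and is precisely the reduction the paper attributes to Yu: since every isomorphism between subgroups of $S_3$ is realized by conjugation in $S_3$, the conjugacy class of $\rho_i\colon G_K \to \GL_2(\F_2)\cong S_3$ is determined by its kernel, i.e.\ by $L_i := K(E_i[2])$. So far the outline agrees with the paper's description of Yu's strategy, and the local dimension counts ($\dim H^1(K_q,E_i[2]) = 2d_q^{(i)}$ with maximal isotropic local conditions of dimension $d_q^{(i)}$) are also correct.

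The genuine gap is exactly the one you flag at the end, and it is not a small one --- it is the proof. To raise $r_2(E_2^\chi)$ by $2$ at a given step via a prime $q$, the Mazur--Rubin/Klagsbrun--Mazur--Rubin machinery requires $\Frob_q$ to be trivial on a finite extension $N$ of $K$ that contains $L_2$, the field cut out by the Selmer cocycles of the current twist of $E_2$, and $K(\sqrt{\O_{K,\Sigma}^{\times}})$ (this is the class-field-theoretic constraint that lets one build a global character whose only new ramification is at $q$ --- one cannot simply decree ``$\psi_q$ ramified exactly at $q$''). Simultaneously you want $\Frob_q|_{L_1}\ne 1$. If $L_1 \subseteq N$ this is impossible, and whether that happens depends delicately on the pair $(\rho_1(G_K),\rho_2(G_K))\subseteq S_3\times S_3$: the dangerous configurations are when $G(L_1/K)$ is a $2$-group (so $L_1$ may sit inside a $2$-power extension of $L_2$) and when a curve has a rational $2$-torsion point (so $d_q^{(1)}\ge 1$ for every admissible $q$ and all the divergence must come from the unit gap, which in turn requires the local growth for $E_2$ and the local bound for $E_1$ to be achieved at the \emph{same} prime). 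This case analysis over the subgroup lattice of $S_3$ is where Yu's proof actually lives; your proposal names the obstacle but does not resolve it.

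One further caution: the local machinery developed in this paper (Lemma~\ref{4.6}, Remark~\ref{canlem}, and the rank-raising argument of Theorem~\ref{firstthm}) is stated and proved only for $p\ge 3$. The identity $E^{\psi}(K_\q)[p^\infty] = E^{\psi}[\p]$ and the clean statement $\gamma_\q(\psi)=\Hom(G(F/K_\q),E[p])$ are specific to odd $p$, and the $p=2$ analogues involve additional parity and archimedean subtleties; you cannot invoke ``the Mazur--Rubin twisting method'' at $p=2$ as a black box but must use the specifically $2$-adic form of it.
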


Yu proved the conjecture using the fact that \( K(E_1[2]) = K(E_2[2]) \) if and only if there exists a \( G_K \)-module isomorphism \( E_1[2] \cong E_2[2] \). However, we do not know whether a \( G_K \)-module isomorphism \( E_1[p] \cong E_2[p] \) necessarily exists when \( K(E_1[p]) = K(E_2[p]) \).

In this paper, we extend the framework of Mazur and Rubin by introducing the notion of $p$-twisted Selmer near-companion curves (abbreviated as $p$-TSNC). Instead of quadratic twists, we consider twists by all characters $\chi\in\Hom(G_K,\mu_p)$, and we compare the dimensions of the corresponding twisted $p$-Selmer groups.
\begin{defn}
We say $E_1$ and $E_2$ are {\em $p$-twisted Selmer near-companions ($p$-TSNC)} if there exists a constant $C := C(E_1,{E_2},K)$ such that for every $\chi \in \Hom(G_K, \mu_p),$ $$\ | \dim_{p} \Sel_{p}(E_2/K,\chi) - \dim_{p} \Sel_{p}(E_1/K,\chi) | < C.$$ 
\end{defn}

We are interested in elliptic curves that are $p$-TSNC over $K$. Our main theorems are as follows:

\begin{thm} \label{theorem}
Let $E_1$ and $E_2$ be elliptic curves over a number field $K$. Let $M_i:=K(E_i[p])$ for $i=1,2.$
Suppose that $M_1\ne M_2.$
Assume that one of the following conditions holds:
\begin{enumerate}
    \item $E_i[p]\subset E_i(K)$ for some $i=1,2$, or \label{H1}
    \item $E_i(K)[p]\cong \Z/p\Z$ for $i=1,2$, \label{H2}or
    \item $E_1(K)[p]\cong \Z/p\Z$ and $K(\mu_p)\subsetneq M_2$, or\label{H3}
    \item $[M_i:K(\mu_p)]\nmid p$ for $i=1,2$,\label{H4}
    \item $\mu_p\subset K.$ \label{H5}
\end{enumerate}
Then $E_1$ and $E_2$ are not $p$-TSNC over $K$. Equivalently, if $p$ satisfies one of the above conditions and if $E_1$ and $E_2$ are $p$-TSNC over $K$, then $K(E_1[p])=K(E_2[p]).$
\end{thm}

\begin{thm} \label{tsncthm}
    Let $E_1$ and $E_2$ be elliptic curves over a number field $K$. Assume that there is a $G_K$-module isomorphism $E_1[p]\cong E_2[p]$ for $p\ge3$. Then $E_1$ and $E_2$ are $p$-TSNC over $K$.
\end{thm}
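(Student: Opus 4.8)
The plan is to realize each twisted $p$-Selmer group as the Selmer group of a single $G_K$-module cut out by local conditions, to transport everything across the given isomorphism $E_1[p]\cong E_2[p]$, and then to compare the two local-condition systems by the standard machinery for Selmer structures.

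Write $T_i=E_i[p]$, and recall that for $\chi\in\Hom(G_K,\mu_p)$ the twisted Selmer group has the form $\Sel_p(E_i/K,\chi)=H^1_{\L^{(i,\chi)}}(K,T_i^\chi)$, where $T_i^\chi$ is the $G_K$-module attached to $E_i[p]$ and $\chi$ (e.g.\ obtained from $E_i[p]$ by inducing along the cyclic extension cut out by $\chi$) and $\L^{(i,\chi)}=(\L^{(i,\chi)}_v)_v$ is a system of local conditions $\L^{(i,\chi)}_v\subseteq H^1(K_v,T_i^\chi)$, built from $E_i/K_v$ and $\chi|_{G_{K_v}}$. The assignment $T\mapsto T^\chi$ and the formation of the local conditions at places of good reduction are functorial in $E_i[p]$; hence a fixed $G_K$-module isomorphism $\phi\colon T_1\xrightarrow{\sim}T_2$ induces, for every $\chi$, an isomorphism $\phi^\chi\colon T_1^\chi\xrightarrow{\sim}T_2^\chi$ of $G_K$-modules compatible with all restriction maps; in particular $\dim_p T_i^\chi$ equals a constant $d=d(E_1,E_2,p)$ independent of $\chi$.

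Let $\Sigma$ be the finite set of places of $K$ consisting of the archimedean places, the places above $p$, and the primes of bad reduction of $E_1$ or of $E_2$; crucially $\Sigma$ is independent of $\chi$. The first and central step is to show that $\phi^\chi$ carries $\L^{(1,\chi)}_v$ onto $\L^{(2,\chi)}_v$ for every $v\notin\Sigma$ and every $\chi$. For such a $v$ both curves have good reduction and $v\nmid p$, and good reduction persists over every finite extension of $K_v$; so --- whether or not $\chi$ is ramified at $v$ --- the local condition $\L^{(i,\chi)}_v$ is the unramified subgroup of $H^1(K_v,T_i^\chi)$, which depends only on $T_i^\chi$ as a $G_{K_v}$-module and is therefore preserved by $\phi^\chi$. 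Granting this, transport $\L^{(2,\chi)}$ along $\phi^\chi$ to a local-condition system on $T_1^\chi$; it agrees with $\L^{(1,\chi)}$ outside $\Sigma$, so the standard comparison lemma for Selmer structures yields
\[
\bigl|\dim_p\Sel_p(E_1/K,\chi)-\dim_p\Sel_p(E_2/K,\chi)\bigr|\ \le\ \sum_{v\in\Sigma}\bigl(\dim_p\L^{(1,\chi)}_v+\dim_p\L^{(2,\chi)}_v\bigr).
\]
Finally I would bound the right-hand side uniformly in $\chi$: each $\dim_p\L^{(i,\chi)}_v\le\dim_p H^1(K_v,T_i^\chi)$, and Tate's local Euler characteristic formula gives $\dim_p H^1(K_v,T_i^\chi)=\dim_p H^0(K_v,T_i^\chi)+\dim_p H^2(K_v,T_i^\chi)+\delta_v$ with $\delta_v=[K_v:\Q_p]\,d$ if $v\mid p$ and $\delta_v=0$ otherwise, while $\dim_p H^0,\dim_p H^2\le d$ and (since $p\ge 3$) $H^1(K_v,T_i^\chi)=0$ for $v\mid\infty$; hence every term is bounded in terms of $E_1,E_2,K$ alone, and as $\Sigma$ is finite and $\chi$-independent the whole sum is at most some $C=C(E_1,E_2,K)$. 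This is exactly the $p$-TSNC property.

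The main obstacle is the central step above: proving that the twisted local condition outside $\Sigma$ is intrinsic to the local Galois module uniformly over all $\chi$, including at the infinitely many places at which $\chi$ ramifies. That uniformity is indispensable, since the ramification set of $\chi$ is unbounded as $\chi$ varies and a merely place-by-place size bound on the $\L^{(i,\chi)}_v$ (which one has only on the fixed finite set $\Sigma$) would not control the sum. What makes the intrinsic matching possible here --- in contrast with the quadratic-twist setting of Mazur and Rubin, where twisting turns a prime of good reduction into one of additive reduction and the image of the local Kummer map is no longer the unramified subgroup --- is that replacing $K$ by the cyclic extension cut out by $\chi$ preserves good reduction, so the relevant local condition stays unramified; making this rigorous, together with compatibility with the Shapiro-type identification of $H^1(K_v,T_i^\chi)$ underlying the definition of $T_i^\chi$, is where the real work lies, while the estimates over $\Sigma$ and the vanishing at $v\mid\infty$ (the point where $p\ge 3$ is used) are routine.
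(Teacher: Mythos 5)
Your overall architecture matches the paper's proof: both compare the two Selmer groups inside the common cohomology $H^1(K,E_1[p])=H^1(K,E_2[p])$ (via the given $G_K$-isomorphism), show that the local conditions agree outside a fixed finite set $S$, and bound the discrepancy by the sizes of the local cohomology groups at $S$, which are $\chi$-independent. You also correctly identify the crux: matching the twisted local conditions at the infinitely many places $v\notin S$ where $\chi$ ramifies. But the mechanism you propose for that step is false.

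You assert that for $v\notin\Sigma$ with $\chi_v$ ramified, ``the local condition $\L^{(i,\chi)}_v$ is the unramified subgroup of $H^1(K_v,T_i^\chi)$,'' on the grounds that $E_i$ retains good reduction over the cyclic extension $F/K_v$ cut out by $\chi_v$. This is not correct. The local condition is the Kummer image of $E_i^{\chi}(K_v)/\p E_i^{\chi}(K_v)$, and $E_i^\chi$ has \emph{additive} reduction at $v$ precisely when $\chi_v$ is ramified; under the canonical $G_{K_v}$-isomorphism $E_i^\chi[\p]\cong E_i[p]$ (which does respect unramified subgroups), the image $\gamma_v(\chi_v)$ in fact satisfies $\gamma_v(\chi_v)\cap\Hu(K_v,E_i[p])=0$ (this is Proposition~7.8 of \cite{ds}, quoted in Section~2 of the paper). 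So your claimed contrast with the quadratic setting --- that here twisting does \emph{not} destroy good reduction --- is the opposite of what happens; the twist is additive in both settings. What actually saves the argument (and this is the content the paper extracts from \cite[Lemma~7.4]{ds}) is a different intrinsic description: since $E_i$ has good reduction at $v\nmid p$, inertia acts trivially on $E_i[p]$, and one shows
\[
\gamma_v^i(\chi_v)\;=\;\ker\bigl(H^1(K_v,E_i[p])\to H^1(F,E_i[p])\bigr)\;=\;\Hom\bigl(G(F/K_v),E_i[p]^{G_{K_v}}\bigr),
\]
the containment coming from \cite[Lemma~7.4]{ds} and the equality by the dimension count $\dim_p\gamma_v^i(\chi_v)=\dim_p E_i(K_v)[p]=\dim_p\Hom(G(F/K_v),E_i[p]^{G_{K_v}})$. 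This description is manifestly functorial in the $G_{K_v}$-module $E_i[p]$, hence transported by $\phi$, which is the intrinsic matching you need. In short: your plan is sound and essentially the paper's, but the decisive step --- the only one you flag as ``where the real work lies'' --- is carried by a wrong mechanism (unramified subgroup) in place of the correct one (kernel of restriction to $F$, via good reduction and triviality of the inertia action), and as stated your argument would not close.
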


Note that a celebrated paper of Serre~\cite{serre} shows that all but finitely 
many primes $p$ satisfy condition~\ref{H4}. More precisely, if $E/K$ has no complex 
multiplication, then
\[
  \rho_{E,p} : G_K \to \GL_2(\F_p)
\]
is surjective for all but finitely many primes $p$.
If $E/K$ has complex multiplication by $\mathcal O_M$, then for all but 
finitely many primes $p$, the image of
\[
  \rho : G_K \too \GL_2(\F_p)
\]
is contained in a Cartan subgroup $C_p \subset \GL_2(\F_p)$ or in its normalizer 
$N(C_p)$. Hence,
\[
  \Gal(K(E[p])/K) \cong C_p \quad \text{or} \quad N(C_p),
\]
where $C_p$ is isomorphic to 
$(\mathcal O_M/p\mathcal O_M)^\times$, 
$\F_p^\times \times \F_p^\times$, 
or $\F_{p^2}^\times$,
depending on the splitting behavior of $p$ in $M$.  
In all cases, the prime $p$ satisfies condition~\ref{H4}.

Suppose that $\mu_p \subset K$. Then any two elliptic curves $E_1$ and $E_2$
over $K$ satisfy at least one of the conditions \ref{H1}--\ref{H4}.
Suppose that $E_1$ and $E_2$ do not satisfy \ref{H4}, that is,
\[
[M_i : K(\mu_p)] = [M_i : K] \mid p
\quad \text{for some } i=1,2.
\]
We first consider $E_1$.
If $[M_1 : K]=1$, then $E_1[p]\subset E_1(K)$, and hence \ref{H1} holds.

If $[M_1 : K]=p$, then the image of the mod-$p$ Galois representation
\[
\rho_{E_1,p} : G_K \to \GL_2(\F_p)
\]
is a $p$-Sylow subgroup of $\GL_2(\F_p)$.
Since $\mu_p \subset K$, the determinant of $\rho_{E_1,p}$ is trivial, and hence
\[
G(M_1/K) \cong
\left\langle
\begin{pmatrix}
1 & 1\\
0 & 1
\end{pmatrix}
\right\rangle.
\]
In particular,
\[
E_1(K)[p] \cong \Z/p\Z.
\]
Now consider $E_2$.
If $[M_2 : K]=1$, then $E_2[p]\subset E_2(K)$, so \ref{H1} holds.
If $[M_2 : K]=p$, then the same argument shows that
\[
E_2(K)[p] \cong \Z/p\Z,
\]
and hence \ref{H2} holds. If $[M_2:K]\nmid p$, then \ref{H3} holds.
Therefore, when $\mu_p \subset K$, any two elliptic curves $E_1$ and $E_2$
satisfy at least one of the conditions \ref{H1}--\ref{H4}.

Let $E_1$ and $E_2$ be elliptic curves over a number field $K$ without complex multiplication. If $E_1$ is not isogenous to $E_2$ over $K$, then $K(E_1[p])\ne K(E_2[p])$ for infinitely many primes $p$, by Faltings’ theorem (see \cite[Theorem 8.7]{sscc}). In other words, if $E_1$ and $E_2$ are $p$-TSNC over $K$ for all but finitely many primes $p$, then they are isogenous over $K$.
If $E_1$ or $E_2$ has complex multiplication, the same conclusion holds under certain additional assumptions (see \cite{sscc}).

Moreover, we have the following observations:
\begin{rem}
    Let $\rho_i : G_K \too \GL_2(\F_p)$ be the mod $p$ Galois representation attached to elliptic curve $E_i$. Note that $E_1[p]\cong E_2[p]$ as $G_K$-modules if and only if there exists an element $A\in\GL_2(\F_p)$ such that $\rho_1(\sigma)=A^{-1}\rho_2(\sigma)A$ for all $\sigma\in G_K.$
\end{rem}

\begin{cor} \label{1rational}
    Let $K$ be a number field containing $\mu_p$. Suppose, for $i=1,2$, $E_i(K)[p]\cong \Z/p\Z.$ If $E_1$ and $E_2$ are $p$-TSNC over $K$, then $E_1[p]$ and $E_2[p]$ are $G_K$-isomorphic.
\end{cor}
\begin{proof}
    Assume that $\mu_p\subset K$ and $E_i(K)[p]\cong \Z/p\Z$ for $i=1,2$. If $E_1$ and $E_2$ are $p$-TSNC over $K$, by case \ref{H2} of Theorem \ref{theorem}, we have that $K(E_1[p])=K(E_2[p]).$
    Then we have $G(M_i/K)\cong \Z/p\Z$ for $i=1,2$.
    The Galois group $G(M_i/K)$ are $p$-Sylow subgroups of $\GL_2(\F_p)$. So, $\rho_i(G_K)$ is conjugate to $\left\langle\begin{pmatrix}
1 & 1 \\
0 & 1
\end{pmatrix}\right\rangle$. Note that \begin{align*}
    \begin{pmatrix}
1 & 1 \\
0 & 1
\end{pmatrix}=\begin{pmatrix}
1/m & 0 \\
0 & 1
\end{pmatrix}\begin{pmatrix}
1 & m \\
0 & 1
\end{pmatrix}\begin{pmatrix}
m & 0 \\
0 & 1
\end{pmatrix}.\end{align*} Thus, there exists $Q\in\GL_2(\F_p)$ such that $\rho_1(\sigma)=Q^{-1}\rho_2(\sigma)Q$ for all $\sigma\in G_K.$ Hence $\rho_1\sim\rho_2.$
\end{proof}

\begin{cor} \label{2ratioanal}
    Assume that $E_1[p]\subset E_1(K)$. If $E_1$ and $E_2$ are $p$-TSNC over $K$, then $E_1[p]$ and $E_2[p]$ are $G_K$-isomorphic.
\end{cor}
\begin{proof}
    If $E_1[p]\subset E_1(K)$ and if $E_1$ and $E_2$ are $p$-TSNC over $K$, then, by case \ref{H1} of Theorem \ref{theorem}, $K(E_2[p])=K$. Thus, $E_2(K)[p]=E_2[p]$. Hence $E_1[p]\cong E_2[p]$ with the trivial $G_K$-action. 
\end{proof}

Our strategy is as follows. Suppose that $K({E_1}[p])\ne K({E_2}[p])$. We view all Selmer groups $\Sel_p(E'/K,\chi)$ as subspaces of $H^1(K,E'[p])$ for $E'=E_1$ or $E_2$.
Then, we will divide all possible cases based on the field extension degrees, and in each case, by Chebotarev's density theorem, there exist infinitely many characters $\chi\in\Hom(G_K,\mu_p)$ such that, for every positive integer $d$,
\begin{align*}
    |\dim_p&(\Sel_p(E_1/K))-\dim_p(\Sel_p(E_2/K))| +2d \\ &\le |\dim_p(\Sel_p(E_1/K,\chi))-\dim_p(\Sel_p(E_2/K,\chi)) |.
\end{align*}

\section{Selmer groups}
In this section, we present the lemmas required for the proof of our main theorems. Although these lemmas are not original to this work, we have included them for the reader's convenience.
Fix a prime $p\ge 3$. Let $K$ be a number field and $v$ a place of $K$. Define $\mathcal{C}(K) := \Hom(G_K,\mu_p)$ and $\cC(K_v):=\Hom(G_{K_v},\mu_p)$. The subsequent definitions are from \cite[Definition 5.1 and 5.3]{kmr1}.

Let $\Sigma$ be a finite set of places of $K$ containing
all places where $E$ has bad reduction, all places dividing $p\infty$,
and sufficiently large such that
\begin{equation}\label{Sigma1}
\text{the primes in } \Sigma \text{ generate the ideal class group of } K,    
\end{equation}
\begin{equation}\label{Sigma2}
    \text{the natural map } \O_{K,\Sigma}^\times/(\O_{K,\Sigma}^\times)^p \to 
   \prod_{v\in\Sigma} K_v^\times/(K_v^\times)^p \text{ is injective,}
\end{equation}
where $$\O_{K,\Sigma}^\times:=\{x\in K^\times : x\in\O_{K_v}^\times\text{ for every }v\notin\Sigma\}.$$

\begin{rem}
 The set $\Sigma$ can always be enlarged to satisfy the above conditions, as shown in \cite[Lemma 6.1]{kmr1}.
\end{rem}
\begin{defn} \label{defnsel}
    Let $\chi \in \mathcal{C}(K)$ (resp. $\mathcal{C}(K_v)$) be nontrivial. Let $L$ denote the cyclic extension of $K$ (resp., $K_v$) corresponding to $\chi$. Define $\I_L:=\ker(\Z[G(L/K)]\to\Z)$. Then $\rank_\Z(\I_L)=p-1.$ We define the $p$-twist $E^\chi$ of $E$ to be the abelian variety $\I_L\otimes E$ of dimension $p-1$. Concretely, $$E^\chi = \ker(\Res^L_K(E) \rightarrow E)$$ where $\Res^L_K(E)$ denotes the Weil restriction of scalars of $E$ from $L$ to $K$. For details, see \cite{MRS}.

    Let $\O$ denote the ring of integers of the cyclotomic field of $p$-th roots of unity, and let $\p$ denote the unique prime of $\O$ lying above $p$. Note that $\O\subset\End_K(E^\chi)$ if $\chi$ is nontrivial(see \cite[Proposition 6.3.]{ds}).
    There exists a canonical $G_K$-isomorphism $E^\chi[\p] \cong E[p]$ (see \cite[Lemma 5.2.]{kmr1}).

    For a place $v$ of $K$, let $$\loc_v : H^1(K,E[p]) \too H^1(K_v,E[p])$$ denote the restriction map of group cohomology and if $c\in H^1(K,E[p]),$ denote $c_v:=\loc_v(c)$.

    For a place $v$ of $K$, let $\chi$ denote an element of $\cC(K_v)$. Define $$\gamma_v(\chi):=\image(E^\chi(K_v)/{\p E^\chi(K_v)} \rightarrow H^1(K_v,E^\chi[\p])\cong H^1(K_v,E[p])).$$ Note that this Kummer map depends on the choice of a generator $\p/\p^2$, but its image is independent of this choice.
    For a trivial character $1_v$, define $$\gamma_v(1_v):=\image(E(K_v)/pE(K_v)\hookrightarrow H^1(K_v,E[p]))$$ is the image of classical Kummer map.

    For a non-archimedean place $v$ with residue characteristic different from $p$, if $E$ has a good reduction at $v$, define $$\Hu(K_v,E[p]) := H^1(K_v^{\mathrm{ur}}/{K_v},E[p]),$$ where $K_v^{\ur}$ denotes the maximal unramified extension of $K_v$.

    For $\chi \in \cC(K)$, define $$\Sel_p(E/K,\chi):=\{c \in H^1(K,E[p]) : c_v \in \gamma_v(\chi_v) \text{ for all $v$} \},$$ where $\chi_v=\chi|_{G_{K_v}}.$

    Let $S$ be a set of primes of $K$. For $\psi \in \prod_{v\in S} \cC(K_v)$, define 
    \begin{align*}
      \Sel_p(E/K,\psi) := \{c \in H^1(K,E[p]) : c_v \in \gamma_v(\psi_v)& \text{ for $v\in S$}, \\&c_v \in \gamma_v(1_v) \text{ for $v\notin S$} \}.
    \end{align*}
    Note that if $\chi\in\cC(K)$ is trivial, $\Sel_p(E/K,\chi)$ is the classcial $p$-Selmer group of elliptic curve $E/K$. Thus, $$\Sel_p(E/K):=\{c \in H^1(K,E[p]) : c_v \in \gamma_v(1_v) \text{ for all $v$} \},$$
    Define $r_p(E):=\dim_p(\Sel_p(E/K))$ and $r_p(E,\chi):=\dim_p(\Sel_p(E/K,\chi)).$
\end{defn}

\begin{defn}
    For $1 \le i \le 2$, define \begin{align*}
    \P(E)& :=\{\q : \q\notin\Sigma\},\\
        \P_i(E)& := \{\q\in\P(E) : \mu_p \subset K_\q \text{  and } \dim_p \Hu(K_\q,E[p]) = i \},
    \end{align*} and define $\P_0(E) :=\{\q : \q \notin\Sigma\cup\P_1(E)\cup\P_2(E) \}.$
    Observe that $\P(E)=\P_0(E)\cup\P_1(E)\cup\P_2(E).$
\end{defn}

\begin{prop} \label{ramunr}
    Assume that $v\nmid p\infty$, $E$ has good reduction at $v$ and $\chi_v$ is ramified.
    Then $\gamma_v(\chi_v) \cap \Hu(K_v,E[p])=0$.
    \end{prop}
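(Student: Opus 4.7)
The plan is to unravel the Kummer cocycle defining $\gamma_v(\chi_v)$, use the unramified hypothesis to pin down the lifting point over $K_v^{\ur}$, and then exploit the ramification of $\chi_v$ via the N\'eron model of $E^{\chi_v}$ to force $c = 0$.

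First, any class $c \in \gamma_v(\chi_v)$ is the image of some $P \in E^{\chi_v}(K_v)$ under the Kummer map: choosing $Q \in E^{\chi_v}(\bar K_v)$ with $\p Q = P$, $c$ is represented by the cocycle $\sigma \mapsto \sigma Q - Q \in E^{\chi_v}[\p]$, viewed in $H^1(K_v, E[p])$ via the canonical $G_{K_v}$-isomorphism $E^{\chi_v}[\p] \cong E[p]$ from \cite[Lemma 5.2]{kmr1}. Now suppose $c \in \Hu(K_v, E[p])$, so $c|_{I_v} = 0$. A short coboundary calculation shows that, after replacing $Q$ by $Q - R$ for a suitable $R \in E^{\chi_v}[\p]$, one may assume $Q \in E^{\chi_v}(K_v^{\ur})$. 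Hence $P \in \p E^{\chi_v}(K_v^{\ur})$, and the proposition reduces to showing that the natural map
\[
E^{\chi_v}(K_v)/\p E^{\chi_v}(K_v) \to E^{\chi_v}(K_v^{\ur})/\p E^{\chi_v}(K_v^{\ur})
\]
is injective.

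For this descent I would use the N\'eron model of $E^{\chi_v}$. Because $\chi_v$ is ramified and $v \nmid p$, the field $L_v = K_v^{\ker \chi_v}$ is a tamely totally ramified cyclic extension of degree $p$, and the twist trivializes over $L_v$: one has $E^{\chi_v}_{L_v} \cong E^{p-1}_{L_v}$, which has good reduction because $E$ does. Consequently $E^{\chi_v}(L_v K_v^{\ur})$ is $\p$-divisible (using $v \nmid p$). Descending through $\Gal(L_v K_v^{\ur}/K_v^{\ur}) \cong \Z/p\Z$, and using the ramified action of $\chi_v$ on the $\O = \Z[\zeta_p]$-module structure of $E^{\chi_v}$, should yield the required injectivity.

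The main obstacle is this last descent step: one must track carefully how the $\O$-action on $E^{\chi_v}(L_v K_v^{\ur})$ interacts with the tame inertia via $\chi_v$ to verify that the relevant $H^1$ of $\Gal(L_v K_v^{\ur}/K_v^{\ur})$ vanishes. The ramification hypothesis is essential here: if $\chi_v$ were unramified then $E^{\chi_v}$ would have good reduction at $v$ and one would instead have $\gamma_v(\chi_v) = \Hu(K_v, E[p])$, contrary to the conclusion of the proposition.
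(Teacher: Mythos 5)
Your reduction to the injectivity of
\[
E^{\chi_v}(K_v)/\p E^{\chi_v}(K_v) \longrightarrow E^{\chi_v}(K_v^{\ur})/\p E^{\chi_v}(K_v^{\ur})
\]
is correct (and the coboundary manipulation replacing $Q$ by $Q-R$ is fine, since $I_v$ acts trivially on $E^{\chi_v}[\p]\cong E[p]$ by good reduction). But the part you yourself flag as ``the main obstacle'' is a genuine gap, and as outlined it does not close. The descent you propose controls the degree-$p$ extension $L_vK_v^{\ur}/K_v^{\ur}$ and so, at best, gives information about $E^{\chi_v}(K_v^{\ur})/\p$; it says nothing about the map in question, whose source is $E^{\chi_v}(K_v)/\p$ and which lives over the entirely different extension $K_v^{\ur}/K_v\cong\hat{\Z}$. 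You never compare the two sides. Moreover, $E^{\chi_v}$ still has bad reduction over $K_v^{\ur}$ (because $L_v\not\subset K_v^{\ur}$), so the Frobenius-descent from $K_v^{\ur}$ to $K_v$ is precisely where the component group and the failure of $\p$-divisibility would have to be confronted, and nothing in your sketch addresses that. Note also that the target injectivity is genuinely false without ramification: if $\chi_v$ is trivial and $E[p]\subset E(K_v)$, then $E(K_v)/pE(K_v)$ is two-dimensional while $E(K_v^{\ur})/pE(K_v^{\ur})=0$, so the argument must use the ramification at the exact point you leave open.

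The argument actually cited by the paper (\cite[Proposition 7.8]{ds}) is purely cohomological and much shorter. By \cite[Lemma 7.4]{ds} one has
\[
\gamma_v(\chi_v)\;\subset\;\ker\bigl(H^1(K_v,E[p])\to H^1(F,E[p])\bigr),
\]
where $F$ is the cyclic degree-$p$ extension of $K_v$ cut out by $\chi_v$; since $\chi_v$ is ramified, $F/K_v$ is totally ramified, so $I_v G_F=G_{K_v}$. Take $c\in\gamma_v(\chi_v)\cap\Hu(K_v,E[p])$. Because $I_v$ acts trivially on $E[p]$, coboundaries vanish on $I_v$, so every cocycle representative of $c$ satisfies $c|_{I_v}=0$ identically, while $c|_{G_F}$ is a coboundary, say $c(\rho)=(\rho-1)x_0$ for $\rho\in G_F$. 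Writing any $\tau\in G_{K_v}$ as $\tau=\sigma\rho$ with $\sigma\in I_v$ and $\rho\in G_F$ and using $\sigma x_0=x_0$ gives $c(\tau)=\sigma(\rho-1)x_0=(\tau-1)x_0$, so $c$ is a coboundary and $c=0$. This single cocycle computation replaces the N\'eron-model descent you were attempting, and it is where the ramification hypothesis does its work.
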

\begin{proof}
    See \cite[Proposition 7.8]{ds}.
\end{proof}

\begin{prop} \label{localcond}
    Assume that $v\nmid p\infty$, $E$ has good reduction at $v$ and $\chi_v$ is unramified.
    Then \begin{itemize}
        \item $\gamma_v(\chi_v)=\Hu(K_v,E[p])$,
        \item $\dim_p \gamma_v(\chi_v) = \dim_p E[p]^{\Frob_v=1}$, where $\Frob_v$ denotes the Frobenius generator,
        \item there exists an isomorphism $\Hu(K_v,E[p]) \cong E[p]/{(\Frob_v-1)E[p]}$ given by evaluating cocycles at $\Frob_v$.
    \end{itemize}
\end{prop}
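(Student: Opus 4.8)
My proposed proof follows the pattern of the cited ramified case (\cite[Proposition 7.8]{ds}): I would reduce all three assertions to the local Kummer map of the abelian variety $E^\chi$ at the good place $v$, and thence to the cohomology of $\hat{\Z}$. (When $\chi_v$ is trivial one reads $E^\chi=E$ and takes $\gamma_v(1_v)$ to be the image of the ordinary Kummer map $E(K_v)/pE(K_v)\to H^1(K_v,E[p])$; the argument is uniform in the trivial and nontrivial unramified cases.) The first thing to check is that $E^\chi$ has good reduction at $v$. Since $\Res^L_K E$ is isogenous to $E\times E^\chi$, the rational Tate module $V_p(E^\chi)$ is a $G_K$-direct summand of $V_p(\Res^L_K E)\cong\mathrm{Ind}_{G_L}^{G_K}V_p(E)$. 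As $v\nmid p$, good reduction of $E$ makes $V_p(E)$ unramified at $v$; as $\chi_v$ is unramified, $L/K$ is unramified at $v$, so inertia at $v$ lies in $G_L$ and acts trivially on $\mathrm{Ind}_{G_L}^{G_K}V_p(E)$. Hence $V_p(E^\chi)$ is unramified at $v$, and by N\'eron--Ogg--Shafarevich $E^\chi$ has good reduction at $v$.

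Next I would identify $\gamma_v(\chi_v)=\image(\delta)$, where $\delta\colon E^\chi(K_v)/\p E^\chi(K_v)\hookrightarrow H^1(K_v,E^\chi[\p])$ is the (injective) Kummer map, with $\Hu(K_v,E^\chi[\p])$. Because $\p$ lies above $p$, its generator $1-\zeta_p$ is prime to the residue characteristic of $v$, so multiplication by $1-\zeta_p$ is an automorphism of the formal group of $E^\chi$ over $K_v^{\ur}$ and an isogeny of the reduction $\overline{E^\chi}$ over $\bar k_v$; hence $E^\chi(K_v^{\ur})$ is $\p$-divisible, every Kummer class splits over $K_v^{\ur}$, and $\image(\delta)\subseteq\Hu(K_v,E^\chi[\p])$. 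For the reverse inclusion I would count: $|\image(\delta)|=|E^\chi(K_v)/\p E^\chi(K_v)|=|\overline{E^\chi}(k_v)/\p\,\overline{E^\chi}(k_v)|=|\overline{E^\chi}(k_v)[\p]|=|E^\chi[\p](K_v)|$, the last equality because $E^\chi[\p]$ is finite \'etale over $\O_{K_v}$, while $|\Hu(K_v,E^\chi[\p])|=|H^1(\hat{\Z},E^\chi[\p])|=|E^\chi[\p](K_v)|$ since $H^0$ and $H^1$ of $\hat{\Z}$ acting on a finite module have the same order. As $\image(\delta)\subseteq\Hu(K_v,E^\chi[\p])$ and the two sides are finite of equal cardinality, they coincide. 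Transporting along the canonical $G_{K_v}$-isomorphism $E^\chi[\p]\cong E[p]$, which carries unramified classes to unramified classes because it is $G_{K_v}$-equivariant, gives $\gamma_v(\chi_v)=\Hu(K_v,E[p])$.

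It remains to make $\Hu(K_v,E[p])$ explicit. Since $E$ has good reduction and $v\nmid p$, the module $E[p]$ is unramified at $v$, so $\Hu(K_v,E[p])=H^1(\Gal(K_v^{\ur}/K_v),E[p])=H^1(\hat{\Z},E[p])$, where $\hat{\Z}=\Gal(\bar k_v/k_v)$ is topologically generated by $\Frob_v$. The standard description of the cohomology of $\hat{\Z}$ acting on a finite module $M$ is the isomorphism $H^1(\hat{\Z},M)\xrightarrow{\sim}M/(\Frob_v-1)M$, $[c]\mapsto c(\Frob_v)$; with $M=E[p]$ this is the third assertion. Finally, $\ker(\Frob_v-1)$ and $\coker(\Frob_v-1)$ have equal order on the finite group $E[p]$, so $\dim_p\gamma_v(\chi_v)=\dim_p\bigl(E[p]/(\Frob_v-1)E[p]\bigr)=\dim_p E[p]^{\Frob_v=1}$, the second assertion. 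The only step that is not purely formal is the middle one: it genuinely uses good reduction of $E^\chi$ at $v$ together with the reduction-map and cardinality facts for abelian varieties at primes away from $p$, whereas the $\hat{\Z}$-cohomology and the identity $|\ker|=|\coker|$ dispose of the rest mechanically. If preferred, the equality $\gamma_v(\chi_v)=\Hu(K_v,E[p])$ may instead be quoted directly from the references supplying the ramified case, leaving only the last paragraph to verify.
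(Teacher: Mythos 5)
The paper does not supply its own proof of this proposition; it cites \cite[Lemmas 7.2 and 7.3]{ds}, and your argument is a correct, self-contained version of exactly the argument in those lemmas. The key steps are all sound: $\chi_v$ unramified and $E$ of good reduction at $v\nmid p$ force $E^\chi$ to have good reduction at $v$ (whether you argue via $\mathrm{Ind}_{G_L}^{G_K}V_p(E)$ and N\'eron--Ogg--Shafarevich, or directly via good reduction of the Weil restriction along a finite \'etale extension); the generator of $\p$ has norm a power of $p$, so it acts invertibly on the formal group and the kernel of reduction is $\p$-divisible with trivial $\p$-torsion, giving both $\image(\delta)\subseteq \Hu$ and the cardinality chain $|E^\chi(K_v)/\p E^\chi(K_v)|=|\overline{E^\chi}(k_v)/\p\overline{E^\chi}(k_v)|=|\overline{E^\chi}(k_v)[\p]|=|E^\chi[\p](K_v)|=|\Hu(K_v,E^\chi[\p])|$, whence equality; transport along the canonical $G_{K_v}$-isomorphism $E^\chi[\p]\cong E[p]$ (and the convention $E^{1_v}=E$, $\p=(p)$, for the trivial local character) then gives the first bullet, and the $\hat\Z$-cohomology identification plus $|\ker(\Frob_v-1)|=|\coker(\Frob_v-1)|$ on the finite module $E[p]$ give the other two. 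One cosmetic slip: you write $\Res^L_K E$ where the setup is local, so it should be $\Res^L_{K_v}E$ with $L/K_v$ the cyclic extension cut out by $\chi_v$; the argument is unchanged.
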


\begin{proof}
    See \cite[Lemma 7.2 and 7.3]{ds}
\end{proof}

\begin{thm}\label{mod2}
    Let $\chi\in\cC(K)$. We have $$r_p(E,\chi)-r_p(E)\equiv \sum_{v}h_v(\chi_v) \text{   mod $2$},$$ where $\chi_v$ is the restriction of $\chi$ to $G_{K_v}$ and $$h_v(\chi_v):=\dim_p(\gamma_v(1_v)/({\gamma_v(\chi_v)\cap\gamma_v(1_v)})).$$
\end{thm}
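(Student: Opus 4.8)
The plan is to reduce the congruence to a statement about intersections of maximal totally isotropic subspaces in an orthogonal $\F_p$-space, and then to extract the parity from the two ``families'' of such subspaces.

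First I would fix a finite set $S$ of places of $K$ containing $\Sigma$, every place at which $\chi$ ramifies, and every place above $p\infty$. For $\psi\in\{1,\chi\}$, Proposition~\ref{localcond} gives $\gamma_v(\psi_v)=\Hu(K_v,E[p])$ for all $v\notin S$. Writing $H^1_S(K,E[p])$ for the subgroup of classes in $H^1(K,E[p])$ unramified outside $S$, and $\loc_S\colon H^1_S(K,E[p])\to H_S:=\bigoplus_{v\in S}H^1(K_v,E[p])$ for the total localization map, we then have $\Sel_p(E/K,\psi)=\loc_S^{-1}(\Lambda_\psi)$ with $\Lambda_\psi:=\bigoplus_{v\in S}\gamma_v(\psi_v)$. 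Hence $r_p(E,\psi)=\dim_p\ker(\loc_S)+\dim_p\big(V\cap\Lambda_\psi\big)$ with $V:=\mathrm{image}(\loc_S)$, and since $\ker(\loc_S)$ is independent of $\psi$ this gives
$$r_p(E,\chi)-r_p(E)=\dim_p(V\cap\Lambda_\chi)-\dim_p(V\cap\Lambda_1).$$

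Next I would equip $H_S$ with an orthogonal structure. For each finite place $v$, cup product followed by the Weil pairing $E[p]\otimes E[p]\to\mu_p$ yields a pairing $\langle\,,\,\rangle_v\colon H^1(K_v,E[p])\times H^1(K_v,E[p])\to H^2(K_v,\mu_p)=\F_p$; because $p$ is odd, the sign from graded-commutativity of cup products cancels the sign coming from antisymmetry of the Weil pairing, so $\langle\,,\,\rangle_v$ is symmetric, and it is non-degenerate by local Tate duality. Set $\langle\,,\,\rangle_S:=\bigoplus_v\langle\,,\,\rangle_v$, a non-degenerate symmetric $\F_p$-space of even dimension $2g_S$. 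Two facts are then required. (a) Each $\gamma_v(\psi_v)$ is a maximal totally isotropic subspace of $(H^1(K_v,E[p]),\langle\,,\,\rangle_v)$: it equals its own orthogonal complement by local Tate duality applied to $E^\psi$, once one knows that under the canonical isomorphism $E^\psi[\p]\cong E[p]$ the polarization pairing of $E^\psi$ corresponds to a unit multiple of the Weil pairing of $E$ (so that the isotropic subspaces agree). Consequently $\Lambda_1,\Lambda_\chi$ are maximal totally isotropic in $H_S$. (b) $V$ is maximal totally isotropic in $H_S$: it is isotropic by global reciprocity ($\sum_v\langle c_v,c'_v\rangle_v=0$ for $c,c'\in H^1_S(K,E[p])$, the terms at $v\notin S$ vanishing because $\Hu(K_v,E[p])$ is isotropic), and $\dim_p V=g_S$ because $V=V^{\perp}$ by Poitou--Tate global duality for the self-dual module $E[p]$. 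In particular $(H_S,\langle\,,\,\rangle_S)$ is split.

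Finally I would invoke the combinatorics of maximal totally isotropic subspaces of a split non-degenerate symmetric $\F_p$-space of dimension $2g_S$, $p$ odd: they fall into exactly two families, and two of them $U_1,U_2$ lie in the same family if and only if $\dim_p\big(U_1/(U_1\cap U_2)\big)$ is even; equivalently, for any maximal totally isotropic $U$ one has $\dim_p(U\cap U_i)\equiv g_S\pmod2$ precisely when $U$ and $U_i$ lie in the same family. Applying this with $U=V$, $U_1=\Lambda_1$, $U_2=\Lambda_\chi$, we find that $\dim_p(V\cap\Lambda_\chi)-\dim_p(V\cap\Lambda_1)$ is even if and only if $\Lambda_1$ and $\Lambda_\chi$ lie in the same family, which holds precisely when $\dim_p\big(\Lambda_1/(\Lambda_1\cap\Lambda_\chi)\big)$ is even. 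Since $\Lambda_1\cap\Lambda_\chi=\bigoplus_{v\in S}\big(\gamma_v(1_v)\cap\gamma_v(\chi_v)\big)$ and $\gamma_v(1_v)=\gamma_v(\chi_v)$ for $v\notin S$, this last dimension equals $\sum_v h_v(\chi_v)$, so $\dim_p(V\cap\Lambda_\chi)-\dim_p(V\cap\Lambda_1)\equiv\sum_v h_v(\chi_v)\pmod2$; together with the displayed identity this is the theorem.

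The step I expect to be the genuine obstacle is (a): that the twisted local condition $\gamma_v(\chi_v)$ is totally isotropic for the \emph{same} form $\langle\,,\,\rangle_v$ attached to $E[p]$ itself, which amounts to the compatibility of the polarization on $E^\chi$ with the Weil pairing of $E$ under $E^\chi[\p]\cong E[p]$. This is where one really uses the structure of $E^\chi$ as an abelian variety with $\O$-action and a suitable polarization, as developed in \cite{kmr1} and the references behind the propositions above; by contrast, the localization-at-$S$ bookkeeping, global reciprocity, and the orthogonal-Grassmannian combinatorics are standard.
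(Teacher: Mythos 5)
Your proposal is correct and reconstructs the standard proof of the cited result \cite[Theorem 4.11]{kmr1}: reduce to a finite orthogonal $\F_p$-space $H_S=\bigoplus_{v\in S}H^1(K_v,E[p])$ with the (symmetric, since $p$ is odd) cup-product/Weil pairing, show that $V=\loc_S(H^1_S)$, $\Lambda_1$, and $\Lambda_\chi$ are all Lagrangian, and invoke the two-orbit combinatorics of maximal isotropic subspaces. Since the paper itself proves nothing here but simply cites Klagsbrun--Mazur--Rubin, whose argument is exactly this Lagrangian framework, your route is essentially the same; you also correctly single out the genuine technical content, namely step (a) that each twisted local condition $\gamma_v(\chi_v)$ (in particular at $v\mid p$ and at bad places) is Lagrangian for the form attached to $E[p]$ itself via the compatibility of the polarization on $E^\chi$ with the Weil pairing of $E$ under $E^\chi[\p]\cong E[p]$, which is precisely what is established in \cite{kmr1} and the references behind it.
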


\begin{proof}
    See \cite[Corollary 4.6]{alc}.    
\end{proof}
\begin{defn}
    Let $E/K$ be an elliptic curve over a number field $K$ and let $\q$ be a place of $K$. The relaxed twisted $p$-Selmer group $\Sel_p(E/K,\chi)^\q$ at $\q$ and the strict twisted $p$-Selmer group $\Sel_p(E/K,\chi)_\q$ at $\q$ are defined by the following exact sequences : 
\begin{equation*}
\raisebox{19pt}{
\xymatrix@C=12pt@R=7pt{
0 \ar[r] & \Sel_p(E/K,\chi)^\q \ar[r] & H^1(K,E[p]) \ar^-{\dirsum{v\ne\q}\loc_v}[rr] 
   && \dirsum{v\ne\q}\displaystyle\frac{H^1(K_v,E[p])}{\gamma_v(\chi_v)} \\
0 \ar[r] & \Sel_p(E/K,\chi)_{\q} \ar[r] & \Sel_p(E/K,\chi) \ar^-{\loc_\q}[rr] 
   && \gamma_\q(\chi_\q).
}}
\end{equation*}
In particular, if $\chi$ is trivial, $\Sel_p(E/K)^\q$ and $\Sel_p(E/K)_\q$ are defined as above.
\end{defn}

\begin{thm}\label{dual}
The images of the two right-hand maps in the following exact sequences are orthogonal complements of each other under the sum of the local Tate pairings.
\begin{equation}
\label{gdd}
\raisebox{19pt}{
\xymatrix@C=12pt@R=7pt{
0 \ar[r] & \Sel_p(E/K) \ar[r] & \Sel_p(E/K)^{\q} \ar^-{\loc_\q}[rr] 
   && \displaystyle\frac{H^1(K_\q,E[p])}{\gamma_\q(1_\q)} \\
0 \ar[r] & \Sel_p(E/K)_{\q} \ar[r] & \Sel_p(E/K) \ar^-{\loc_\q}[rr] 
   && \gamma_\q(1_\q).
}}
\end{equation}
In particular, $$\dim_p(\Sel_p(E/K)^{\q}) - \dim_p(\Sel_p(E/K)_{\q})=\dim_p(\gamma_\q(1_\q)) = \displaystyle\frac{1}{2}\dim_p(H^1(K_\q,E[p])).$$
\end{thm}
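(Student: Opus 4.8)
The plan is to deduce Theorem~\ref{dual} from Poitou--Tate global duality together with a short reciprocity computation; the two displayed exact sequences in \eqref{gdd} are essentially definitional (relaxing, resp.\ tightening, the local condition at $\q$), so the real content is the description of the images of the maps $\loc_\q$. I would first record two standard inputs. The Weil pairing $E[p]\times E[p]\to\mu_p$ and the local invariant isomorphism $H^2(K_v,\mu_p)\xrightarrow{\sim}\tfrac1p\Z/\Z$ equip each $H^1(K_v,E[p])$ with a perfect symmetric pairing $\langle\,,\,\rangle_v$, and the local condition $\gamma_v(1_v)=\image\!\bigl(E(K_v)/pE(K_v)\to H^1(K_v,E[p])\bigr)$ is its own orthogonal complement under $\langle\,,\,\rangle_v$ (compatibility of cup product with the Kummer sequence of $E$). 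In particular $\gamma_v(1_v)$ is maximal isotropic, so $\dim_p\gamma_v(1_v)=\tfrac12\dim_p H^1(K_v,E[p])$, and $\langle\,,\,\rangle_\q$ induces a perfect pairing between $\gamma_\q(1_\q)$ and $H^1(K_\q,E[p])/\gamma_\q(1_\q)$. The second input is global reciprocity: $\sum_v\langle\loc_v(a),\loc_v(b)\rangle_v=0$ for all $a,b\in H^1(K,E[p])$.

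Next I would show that the two images annihilate each other under this pairing. Take $a\in\Sel_p(E/K)^\q$ and $b\in\Sel_p(E/K)$. For each place $v\ne\q$ we have $\loc_v(a),\loc_v(b)\in\gamma_v(1_v)$, which is isotropic, so $\langle\loc_v(a),\loc_v(b)\rangle_v=0$; reciprocity then forces $\langle\loc_\q(a),\loc_\q(b)\rangle_\q=0$, and since $\loc_\q(b)\in\gamma_\q(1_\q)$ this says precisely that the class of $\loc_\q(a)$ in $H^1(K_\q,E[p])/\gamma_\q(1_\q)$ pairs trivially with $\loc_\q(b)$. For the reverse inclusion --- that the two images are the \emph{full} orthogonal complements of each other, not merely mutually orthogonal --- I would invoke the Poitou--Tate nine-term exact sequence for the self-dual $G_K$-module $E[p]$ and a finite set of places $S\supseteq\Sigma\cup\{\q\}$: applied to the system of local conditions that is unrestricted at $\q$ and equal to $\gamma_v(1_v)$ for $v\neq\q$, its exactness at the term $\bigoplus_{v\in S}H^1(K_v,E[p])/\gamma_v(1_v)$ identifies $\loc_\q\bigl(\Sel_p(E/K)\bigr)$ with the full annihilator of $\loc_\q\bigl(\Sel_p(E/K)^\q\bigr)$. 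I expect this to be the only substantive step; the rest is formal, whereas here one must either cite the standard global duality (in the form used in \cite{ds}) or reproduce the relevant portion of the Poitou--Tate sequence.

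Granting the orthogonal-complement statement, the displayed numerical identity is pure bookkeeping. Put $d:=\dim_p\gamma_\q(1_\q)=\tfrac12\dim_p H^1(K_\q,E[p])$, so the quotient $H^1(K_\q,E[p])/\gamma_\q(1_\q)$ also has dimension $d$. Since the two images are orthogonal complements under a perfect pairing between $d$-dimensional spaces, $\dim_p\loc_\q(\Sel_p(E/K)^\q)+\dim_p\loc_\q(\Sel_p(E/K))=d$. On the other hand, exactness of the two rows of \eqref{gdd} gives $\dim_p\Sel_p(E/K)^\q-\dim_p\Sel_p(E/K)=\dim_p\loc_\q(\Sel_p(E/K)^\q)$ and $\dim_p\Sel_p(E/K)-\dim_p\Sel_p(E/K)_\q=\dim_p\loc_\q(\Sel_p(E/K))$; adding and substituting yields $\dim_p\Sel_p(E/K)^\q-\dim_p\Sel_p(E/K)_\q=d=\tfrac12\dim_p H^1(K_\q,E[p])$. (Alternatively, this last identity falls out directly from the Greenberg--Wiles Euler-characteristic formula for the relaxed-at-$\q$ local condition and its dual, the global correction terms cancelling because $E[p]$ is self-dual.)
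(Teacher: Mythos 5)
Your argument is correct; the paper itself gives no proof and simply cites \cite[Theorem~2.3.4]{kolysys}, and your sketch reproduces the standard proof of that cited theorem (local self-duality of the Kummer image under Tate's local pairing, global reciprocity for mutual orthogonality, the Poitou--Tate nine-term sequence for the exact-annihilator statement, then the dimension count). In short, you have supplied the argument the paper delegates to the reference, so the approach is the same.
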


\begin{proof}
    See \cite[Theorem 2.3.4]{kolysys}
\end{proof}
\section{Increasing the Selmer rank}

In this section, let $E/K$ be an elliptic curve with $E(K)[p]\ne 0$ and let $n$ be any positive integer. We will prove that there exist infinitely many $\chi \in \cC(K)$ satisfying $r_p(E,\chi)=r_p(E)+2n$. In this section, we denote $M:=K(E[p])$.

The following lemma is frequently used to verify $\q \in \P_i(E)$.

\begin{lem} \label{plem}
   Let $\q$ be a prime of $K$ such that $\q \notin \Sigma$, and let $\Frob_{\q} \in G(K(E[p])/K)$ denote a Frobenius element for some choice of prime above $\q$. Then :
    \begin{enumerate}
        \item $\q \in \P_2(E)$ if and only if $\Frob_{\q} = 1$;
        \item $\q \in \P_1(E)$ if and only if $\Frob_{\q}$ has order exactly $p$;
        \item $\q \in \P_0(E)$ if and only if $\Frob_{\q}^{p} \ne 1$.
    \end{enumerate}
\end{lem}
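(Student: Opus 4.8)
The plan is to reduce the statement to elementary linear algebra for the operator by which $\Frob_\q$ acts on $E[p]\cong\F_p^{2}$, using Proposition~\ref{localcond}. First I would observe that $\q\notin\Sigma$ forces $E$ to have good reduction at $\q$ and $\q\nmid p\infty$, so $K(E[p])/K$ is unramified at $\q$ and Proposition~\ref{localcond}, applied to the trivial (hence unramified) character at $\q$, gives
\[
\dim_p \Hu(K_\q,E[p]) = \dim_p E[p]^{\Frob_\q=1}.
\]
Thus the invariant $\dim_p\Hu(K_\q,E[p])$ is exactly the dimension of the $\Frob_\q$-fixed subspace of $E[p]$. Next I would record the standard fact that $\mu_p\subset K_\q$ is equivalent to $\det(\Frob_\q)=1$: the Weil pairing identifies $\det$ of the $G_K$-representation on $E[p]$ with the mod $p$ cyclotomic character, and $\mu_p\subset K_\q$ says precisely that $\Frob_\q$ acts trivially on $\mu_p$ (again using $\q\nmid p$, so that $K(\mu_p)/K$ is unramified at $\q$).

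With these two facts in hand, everything becomes a classification of $\Frob_\q\in\GL_2(\F_p)$. For (i): $\dim_p E[p]^{\Frob_\q=1}=2$ is equivalent to $\Frob_\q$ acting trivially on $E[p]$, i.e.\ $\Frob_\q=1$ in $G(K(E[p])/K)$, and in that case $\det(\Frob_\q)=1$ is automatic, so the condition $\mu_p\subset K_\q$ is free; hence $\q\in\P_2$ iff $\Frob_\q=1$. For (ii): if $\Frob_\q$ has order exactly $p$, then inside $\GL_2(\F_p)$ it is conjugate to a nontrivial unipotent Jordan block, so $\det(\Frob_\q)=1$ (whence $\mu_p\subset K_\q$) and $\dim_p E[p]^{\Frob_\q=1}=1$, giving $\q\in\P_1$; conversely, if $\q\in\P_1$ then $\det(\Frob_\q)=1$ and $1$ is an eigenvalue with one-dimensional eigenspace, so $\tr\Frob_\q=2$ and the characteristic polynomial is $(x-1)^2$, forcing $\Frob_\q$ to be a nontrivial unipotent of order $p$.

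Finally, (iii) follows formally: since $\P_0,\P_1,\P_2$ partition $\{\q:\q\notin\Sigma\}$, part (iii) for $\q\notin\Sigma$ amounts to saying $\Frob_\q$ has order neither $1$ nor $p$; and because the Sylow $p$-subgroup of $\GL_2(\F_p)$ has order $p$, the $p$-primary part of the order of any element of $\GL_2(\F_p)$ is at most $p$, so $\Frob_\q^{p}=1$ iff $\Frob_\q$ has order $1$ or $p$. I do not expect a genuine obstacle here; the whole argument is just a translation of local-field data into the linear algebra of $2\times 2$ matrices over $\F_p$. The only points that need a little care are (a) tracking the determinant/cyclotomic condition $\mu_p\subset K_\q$ — in particular noting that a prime with one-dimensional fixed space but $\det(\Frob_\q)\ne1$ lands in $\P_0$, which is consistent with (iii) since such $\Frob_\q$ has order dividing $p-1$ and hence prime to $p$ — and (b) invoking the bound on the $p$-part of element orders in $\GL_2(\F_p)$ for part (iii).
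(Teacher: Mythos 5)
Your argument is correct, and it is essentially the proof that appears in the cited source \cite[Lemma~4.3]{kmr1}, to which the paper defers: translate $\dim_p\Hu(K_\q,E[p])$ into $\dim_p E[p]^{\Frob_\q=1}$ via Proposition~\ref{localcond}, translate $\mu_p\subset K_\q$ into $\det(\Frob_\q)=1$ via the Weil pairing, and then run the $\GL_2(\F_p)$ case analysis (with part (iii) following from the partition $\P=\P_0\cup\P_1\cup\P_2$ and the fact that the $p$-part of $|\GL_2(\F_p)|$ is exactly $p$). The two subtleties you flag at the end — that a one-dimensional fixed space with $\det(\Frob_\q)\ne1$ falls into $\P_0$ because such an element has order prime to $p$, and the Sylow bound for part (iii) — are precisely the points that need care, and you handle them correctly.
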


\begin{proof}
    See \cite[Lemma 4.3]{kmr1}
\end{proof}

\begin{rem} \label{zerorem}
   By the Chebotarev density theorem, $\P_2(E)$ has positive density. If $\q \in \P_0(E)$ and $\psi_\q\in\mathcal{C}(K_\q)$, $$\dim_p\Hu(K_\q,E[p])=0=\dim_pH^1(K_\q,E[p]).$$ Thus, $\gamma_\q(1_\q)=\gamma_\q(\psi_\q)=0$ and $\Sel_p(E/K,\psi_\q)=\Sel_p(E/K)$.
\end{rem}

The following two lemmas are used to construct global characters from local characters.

\begin{lem} \cite[Lemma 6.6]{kmr1}
\label{elem}
Suppose $G$ and $H$ are abelian groups, and $J \subset G \times H$ is a subgroup. Let $\pi_G$ and $\pi_H$ denote the projection maps from $G \times H$ to $G$ and $H$, respectively.
Let $J_0 := \ker(J \map{\pi_G} G/G^p)$.
\begin{enumerate}
\item
The image of the natural map $\Hom((G \times H)/J,\bmu_p) \to \Hom(H,\bmu_p)$ is 
$\Hom(H/\pi_H(J_0),\bmu_p)$.
\item

If $J/J^p \to G/G^p$ is injective, 
then $\Hom((G \times H)/J,\bmu_p) \to \Hom(H,\bmu_p)$ 
is surjective.
\end{enumerate}
\end{lem}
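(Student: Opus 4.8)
The plan is to reduce the whole statement to linear algebra over $\F_p$. The observation I would start from is that for any abelian group $A$ there is a natural identification $\Hom(A,\mu_p)=\Hom_{\F_p}(A/A^p,\mu_p)$, in which $A/A^p=A\otimes_\Z\F_p$ is an $\F_p$-vector space and $\mu_p\cong\F_p$. Two consequences are used repeatedly: first, $\Hom(A,\mu_p)$ separates the points of $A/A^p$; second, if $B\subseteq A$ is a subgroup and $\beta\colon B\to\mu_p$ kills $B\cap A^p$, then $\beta$ descends to a linear functional on the subspace $(B+A^p)/A^p$ of $A/A^p$ and hence extends to a homomorphism $A\to\mu_p$ (extend the functional using a complement). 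I would record this as a preliminary remark, as it is the only ingredient beyond diagram-chasing.

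Next I would unwind the ``natural map'': it is restriction along $H\hookrightarrow G\times H\onto(G\times H)/J$, so it carries $\phi$ to $h\mapsto\phi\bigl((0,h)+J\bigr)$. For the inclusion ``image $\subseteq\Hom(H/\pi_H(J_0),\mu_p)$'' in (i), I would take $\phi$ with restriction $\psi$ and $(g,h)\in J_0$, write $g=pg_1$, and use $(0,h)+J=(-g,0)+J$ in the quotient to get $\psi(h)=\phi\bigl((-g_1,0)+J\bigr)^p=1$, so $\psi$ kills $\pi_H(J_0)$. For the reverse inclusion, given $\psi\colon H\to\mu_p$ vanishing on $\pi_H(J_0)$, I would look for $\phi\colon G\times H\to\mu_p$ trivial on $J$ with $\phi|_{\{0\}\times H}=\psi$; any such $\phi$ is forced to have the shape $(g,h)\mapsto\alpha(g)\psi(h)$ for a homomorphism $\alpha\colon G\to\mu_p$, and triviality on $J$ forces $\alpha(g)=\psi(h)^{-1}$ whenever $(g,h)\in J$. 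The two points to verify are that this prescription is a well-defined homomorphism $\alpha_0$ on $\pi_G(J)$ — well-definedness because $(g,h),(g,h')\in J$ forces $(0,h-h')\in J_0$ and hence $\psi(h)=\psi(h')$ — and that $\alpha_0$ kills $\pi_G(J)\cap G^p$, because $g\in G^p$ together with $(g,h)\in J$ puts $(g,h)$ in $J_0$. Then the preliminary remark extends $\alpha_0$ to $\alpha\colon G\to\mu_p$, and $\phi(g,h):=\alpha(g)\psi(h)$ does the job.

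For (ii), by (i) it suffices to show $\pi_H(J_0)\subseteq H^p$ under the hypothesis that $J/J^p\to G/G^p$ is injective, since then $H\onto H/\pi_H(J_0)$ induces an isomorphism on $\mu_p$-duals and the image in (i) becomes all of $\Hom(H,\mu_p)$. Given $h\in\pi_H(J_0)$, I would pick $(g,h)\in J$ with $g\in G^p$; the class of $(g,h)$ in $J/J^p$ maps to $0$ in $G/G^p$, so it is $0$ by injectivity, whence $(g,h)=p(g',h')$ for some $(g',h')\in J$ and therefore $h=ph'\in H^p$.

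The one step I expect to be the real obstacle is the extension statement in the preliminary remark: homomorphisms into $\mu_p$ do not extend from arbitrary subgroups (as $p\Z\subset\Z$ already shows, where restriction $\Hom(\Z,\mu_p)\to\Hom(p\Z,\mu_p)$ is the zero map), so the argument must use the vanishing hypothesis $\beta(B\cap A^p)=1$ in an essential way in order to pass to $\F_p$-vector spaces, where extension of a functional along an inclusion is automatic. Everything else is routine bookkeeping inside $G\times H$.
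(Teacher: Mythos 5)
Your proof is correct and is in substance the same as the paper's, which simply writes down the sequence $0 \to \pi_H(J_0)H^p/H^p \to H/H^p \to (G\times H)/J(G\times H)^p$ of $\F_p$-vector spaces (exact at $H/H^p$) and applies $\Hom(\cdot,\mu_p)$. Your preliminary remark is precisely the exactness of $\Hom(\cdot,\F_p)$ on $\F_p$-vector spaces, your forward inclusion is the verification that the kernel of the middle map is $\pi_H(J_0)H^p/H^p$, and your construction of $\phi(g,h)=\alpha(g)\psi(h)$ via extending $\alpha_0$ from $\pi_G(J)$ is exactly the functional-extension step, carried out by hand; you also spell out (ii) by showing $\pi_H(J_0)\subseteq H^p$, which the paper leaves tacit. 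Incidentally, the paper's displayed sequence ends in ``$\to 0$'', but the middle map need not be surjective (it would require $G=\pi_G(J)G^p$); since only exactness at $H/H^p$ is used, this is a harmless slip which your element-wise treatment avoids.
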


\begin{lem} \label{cftlemma}
    Let $\Sigma$ be a (finite) set of places of $K$ such that $\mathrm{Pic}(\O_{K,\Sigma}) = 0$. Then the image of the restriction map \begin{align*}
        \cC(K)=\Hom(G_K,\mu_p)=\Hom((\prod_{v\in \Sigma}&K_v^\times \times \prod_{v \notin \Sigma} \O_v^\times)/\O_{K,\Sigma}^\times, \mu_p) \too \\ &\prod_{v \in \Sigma} \Hom(K_v^\times, \mu_p) \times \prod_{v \notin \Sigma} \Hom(\O_v^\times, \mu_p)
    \end{align*}
     is the set of all $((\phi_v)_{v\in\Sigma},(\psi_v)_{v\notin\Sigma})$ such that $\prod_{\p\in\Sigma}\phi_
     \p(b)\prod_{v\notin\Sigma}\psi_v(b) = 1$ for all $b\in\O_{K,\Sigma}^\times$ where $\phi_\p\in\Hom(K_\p,\mu_p)$ and $\psi_v\in\Hom(\O_{K_v},\mu_p).$
\end{lem}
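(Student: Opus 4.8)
The plan is to deduce the statement from global class field theory together with a formal property of the contravariant functor $\Hom(-,\mu_p)$. First I would recall why the two displayed identifications hold, since this is where the hypothesis $\mathrm{Pic}(\O_{K,\Sigma})=0$ enters. By global class field theory the reciprocity map identifies $\Hom(G_K,\mu_p)$ with the continuous homomorphisms $\bA_K^\times/K^\times\to\mu_p$ of the id\`ele class group (any such homomorphism kills the connected component of the identity, which is divisible, hence factors through the profinite completion). Put $U_\Sigma:=\prod_{v\in\Sigma}K_v^\times\times\prod_{v\notin\Sigma}\O_v^\times$, an open subgroup of $\bA_K^\times$. Then $\bA_K^\times=K^\times\cdot U_\Sigma$: given $(a_v)_v$, the fractional ideal $\prod_{v\notin\Sigma}\p_v^{v(a_v)}$ of $\O_{K,\Sigma}$ is principal by hypothesis, say equal to $b\,\O_{K,\Sigma}$ with $b\in K^\times$, and then $(a_vb^{-1})_v\in U_\Sigma$. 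Also $K^\times\cap U_\Sigma=\O_{K,\Sigma}^\times$, because an element of $K^\times$ lies in $U_\Sigma$ iff it is a unit at every place outside $\Sigma$. Hence $\bA_K^\times/K^\times\cong U_\Sigma/\O_{K,\Sigma}^\times$, which is the second identification in the statement, and the ``restriction map'' there is exactly the map $q^{*}\colon\Hom(U_\Sigma/\O_{K,\Sigma}^\times,\mu_p)\to\Hom(U_\Sigma,\mu_p)$ induced by the quotient $q\colon U_\Sigma\onto U_\Sigma/\O_{K,\Sigma}^\times$, followed by the decomposition of a character of the product $U_\Sigma$ into its local components $\psi_v\in\Hom(K_v^\times,\mu_p)$ for $v\in\Sigma$ and $\psi_v\in\Hom(\O_v^\times,\mu_p)$ for $v\notin\Sigma$.

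The main step is then immediate: apply the left-exact contravariant functor $\Hom(-,\mu_p)$ to the short exact sequence $1\to\O_{K,\Sigma}^\times\xrightarrow{\ \Delta\ }U_\Sigma\xrightarrow{\ q\ }U_\Sigma/\O_{K,\Sigma}^\times\to1$, where $\Delta$ is the diagonal embedding $b\mapsto(b)_v$. This yields an exact sequence $0\to\Hom(U_\Sigma/\O_{K,\Sigma}^\times,\mu_p)\xrightarrow{\,q^{*}\,}\Hom(U_\Sigma,\mu_p)\xrightarrow{\,\Delta^{*}\,}\Hom(\O_{K,\Sigma}^\times,\mu_p)$, so the image of the restriction map, i.e.\ the image of $q^{*}$, equals $\ker(\Delta^{*})$. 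Since a character of $U_\Sigma$ with local components $(\psi_v)_v$ satisfies $\Delta^{*}\big((\psi_v)_v\big)(b)=\prod_v\psi_v(b)$, we get $\ker(\Delta^{*})=\{(\psi_v)_v:\prod_v\psi_v(b)=1\text{ for all }b\in\O_{K,\Sigma}^\times\}$, which is precisely the asserted description of the image.

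The one point that requires care — and hence the main obstacle, although it is routine — is the topological bookkeeping: one must check that the class-field-theoretic identifications are compatible with continuity (so that the archimedean factors drop out, which holds because $p$ is odd, whence $\Hom(\R^\times,\mu_p)=\Hom(\C^\times,\mu_p)=0$), and that a continuous character of $U_\Sigma$ corresponds precisely to a family $(\psi_v)_v$ of local characters that is trivial for all but finitely many $v$, so that the product $\prod_v\psi_v(b)$ in the statement is a genuine finite product on the image of the restriction map. These verifications are standard and are already implicit in the set-up of \cite{kmr1}.
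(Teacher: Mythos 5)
Your proof is correct and follows exactly the route the paper intends: the paper's own proof is the single sentence ``The Lemma follows from the class field theory,'' and your argument is precisely the unpacking of that citation (reciprocity, $\bA_K^\times = K^\times \cdot U_\Sigma$ under the class-group hypothesis, $K^\times\cap U_\Sigma=\O_{K,\Sigma}^\times$, then left-exactness of $\Hom(-,\mu_p)$). Your closing remark about restricting to almost-everywhere-trivial families is a genuine point of care that the paper's terse statement glosses over, and you handle it correctly.
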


\begin{proof}
Global class field theory, together with the assumption $\mathrm{Pic}(\O_{K,\Sigma})=0$, yields the equalities. The image follows as claimed.
\end{proof}

\begin{lem}
\label{4.6}
Suppose $p > 2$, $\q\in\P_2(E)$, and $\psi\in\cC(K_\q)$ is nontrivial. If $F$ is the cyclic extension of $K_\q$ corresponding to $\psi$, then
$$
\gamma_v(\psi) = \Hom(G(F/K_\q),E[p]) \subset \Hom(G_{K_\q},E[p]) = H^1(K_\q,E[p]).
$$
\end{lem}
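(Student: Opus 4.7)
Since $\q\in\P_2$, Lemma~\ref{plem} gives $\Frob_\q=1$ on $E[p]$, so $E[p]\subset E(K_\q)$ and $G_{K_\q}$ acts trivially on $E[p]$. Consequently $H^1(K_\q,E[p])=\Hom(G_{K_\q},E[p])$, and the inflation-restriction sequence for $F/K_\q$ identifies $\Hom(G(F/K_\q),E[p])$ with the kernel of the restriction map $H^1(K_\q,E[p])\to H^1(F,E[p])$. The plan is to prove the equality $\gamma_\q(\psi)=\Hom(G(F/K_\q),E[p])$ by matching $\F_p$-dimensions and then establishing the single containment of $\gamma_\q(\psi)$ into this kernel.

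Both sides have $\F_p$-dimension $2$: for $\Hom(G(F/K_\q),E[p])$ this is immediate from $G(F/K_\q)\cong\Z/p$ and $\dim_{\F_p}E[p]=2$; for $\gamma_\q(\psi)$, since $\q\nmid p$ and $E$ has good reduction, the Kummer map $E^\psi(K_\q)/\p E^\psi(K_\q)\hookrightarrow H^1(K_\q,E^\psi[\p])$ is injective with source of dimension $\dim_{\F_p}E^\psi(K_\q)[\p]$, and via the canonical identification $E^\psi[\p]\cong E[p]$ combined with $E[p]\subset E(K_\q)$ this equals $2$.

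For the containment, the Kummer class of any $P\in E^\psi(K_\q)$ must restrict trivially to $G_F$. Via the Weil restriction, $P$ corresponds to an element of $\ker(N\colon E(F)\to E(K_\q))$, and the $\O$-action realizes $\p$-multiplication on $E^\psi$ as $1-\tau$, where $\tau$ generates $G(F/K_\q)$. Viewing $P$ in $E^\psi(\bar K_\q)\subset E(\bar K_\q)^p$ as the Galois-orbit tuple $(\sigma P)_{\sigma}$, I solve the telescoping system $Q_\sigma-Q_{\sigma\tau}=\sigma P$ (consistent precisely because $NP=0$) to produce a $\p$-divisor $Q$ of $P$; the resulting Kummer cocycle $g\mapsto gQ-Q$ restricts to zero on $G_F$ exactly when $Q$ can be chosen with all components in $E(F)$.

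The main obstacle is verifying this divisibility: by the explicit recursion it reduces to the $p$-divisibility in $E(F)$ of $\sum_{i=0}^{p-2}(p-1-i)\tau^iP$, equivalently to the vanishing of $\hat{H}^{-1}(G(F/K_\q),E(F))$ in the relevant piece. I expect to handle this by combining good reduction at $\q$ with the tame structure of $F/K_\q$ (since $p$ is coprime to the residue characteristic). A clean alternative finish is the Lagrangian argument: both $\gamma_\q(\psi)$ and $\Hom(G(F/K_\q),E[p])$ are self-orthogonal under the local Tate pairing — the latter because for $p>2$, cup products of degree-$1$ classes vanish in $H^2$ by the exterior-algebra structure of $H^*(\Z/p,\F_p)$ — so these are $2$-dimensional Lagrangian subspaces of the $4$-dimensional $H^1(K_\q,E[p])$, and equality then follows from any single nontrivial common element.
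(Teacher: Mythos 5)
The paper itself does not prove Lemma~\ref{4.6} -- it cites \cite[Lemma 5.7]{kmr1}, and Remark~\ref{canlem} records that the key input in that proof is $E^\psi(K_\q)[p^\infty]=E^\psi[\p]$ together with the $2$-dimensionality of $\Hom(G(F/K_\q),E[p])$. Your dimension count is essentially the same as theirs and is correct: $\q\in\P_2$ forces $G_{K_\q}$ to act trivially on $E[p]$, hence $E^\psi[\p]\subset E^\psi(K_\q)$, and for $\q\nmid p$ the Kummer source $E^\psi(K_\q)/\p E^\psi(K_\q)$ has $\F_p$-dimension equal to that of $E^\psi(K_\q)[\p]=E^\psi[\p]$, namely $2$.

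The rest of the argument, however, has two problems. First, for the containment $\gamma_\q(\psi)\subset\ker(H^1(K_\q,E[p])\to H^1(F,E[p]))$ you reduce to finding a $\p$-divisor of $P$ inside $E^\psi(F)$, observe this amounts to a $\hat H^{-1}$-type vanishing, and then write ``I expect to handle this by combining good reduction at $\q$ with the tame structure of $F/K_\q$.'' That step is the entire substance of the containment (it is the content of \cite[Lemma 7.4]{ds}, which this paper invokes elsewhere), and leaving it at ``I expect'' means the proposal does not actually prove the lemma. The telescoping reduction itself is fine as far as it goes, but the $p$-divisibility you need is not a formality and has to be established (e.g.\ via the formal group being $p$-divisible plus control of the component group / residue points), not asserted.

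Second, the ``clean alternative finish'' is wrong. In a $4$-dimensional symplectic $\F_p$-space two distinct Lagrangian subspaces can intersect in a line: with symplectic basis $e_1,e_2,f_1,f_2$ (so $\langle e_i,f_j\rangle=\delta_{ij}$), both $\langle e_1,e_2\rangle$ and $\langle e_1,f_2\rangle$ are Lagrangian yet meet only in $\langle e_1\rangle$. So ``equality follows from any single nontrivial common element'' is false. You would need a $2$-dimensional intersection, i.e.\ the full containment, which is exactly what the Lagrangian shortcut was supposed to avoid. (Your observation that $\Hom(G(F/K_\q),E[p])$ is isotropic for $p>2$ because $x^2=0$ in $H^*(\Z/p,\F_p)$ is correct, and the isotropy of $\gamma_\q(\psi)$ is standard -- the issue is purely that isotropy plus a shared nonzero vector does not give equality.)
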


\begin{proof}
    See \cite[Lemma 5.7]{kmr1}.
\end{proof}
\begin{rem} \label{canlem}
    In the proof of Lemma \ref{4.6}, the authors proved \begin{equation} \label{disp}
        E^\psi(K_\q)[p^\infty]=E^\psi[\p]\quad \text{and}\quad\dim_p(\Hom(G(F/K_\q),E[p]))=2.
    \end{equation} These results play a crucial role in increasing the Selmer rank.
\end{rem}

\begin{defn}\label{defnN}
For an elliptic curve $E/K$ and $\chi\in\cC(K)$, let $\bar{s}$ denote the image of $s \in \Sel_{p}(E/K,\chi)$ in the restriction map $$\Sel_{p}(E/K,\chi) \hookrightarrow H^1(K,E[p]) \too H^1(M,E[p])=\Hom(G_M,E[p]).$$
Let $L_{E,\chi}$ be the fixed field of $\cap_{s \in \Sel_{p}(E/K,\chi)} \ker(\bar{s})$. Let $N_E$ be the Galois closure of $L_{E,\chi} K(\sqrt[p]{\O_{K,\Sigma}^\times})$ over $K$. Observe that $N_E$ is a finite $p$-power extension of $M$.
\end{defn}

The following theorem appears as in \cite[Theorem 3.9]{msk}. For the reader’s convenience, we provide the full proof.
\begin{thm}\label{firstthm}
   Suppose that $E(K)[p]\ne 0$. Then, for every positive integer $n$, there exist infinitely many $\chi \in \cC(K)$ satisfying $r_p(E,\chi)=r_p(E)+2n$.
\end{thm}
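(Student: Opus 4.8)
The plan is to build the character $\chi$ locally at a controlled finite set of primes $\q_1,\dots,\q_n$ chosen in $\P_2(E)$, so that at each $\q_j$ the twist changes the local condition from $\gamma_{\q_j}(1_{\q_j})=\Hu(K_{\q_j},E[p])$ to the full $2$-dimensional space $\gamma_{\q_j}(\psi_{\q_j})=\Hom(G(F/K_{\q_j}),E[p])=H^1(K_{\q_j},E[p])$ (using Lemma \ref{4.6} and Remark \ref{canlem}), while keeping $\chi$ unramified (hence $\gamma_v(\chi_v)=\gamma_v(1_v)$) at every other place. At such a prime $\q\in\P_2$, Proposition \ref{localcond} gives $\dim_p\Hu(K_\q,E[p])=2$ already (since $\Frob_\q=1$ on $E[p]$ by Lemma \ref{plem}), so relaxing there enlarges the local condition by... wait — in fact the key point is that I want to relax at primes where $\dim_p H^1(K_\q,E[p])=4$ and $\dim_p\Hu=2$, i.e. primes in $\P_2$; relaxing the Selmer condition from the $2$-dimensional $\gamma_\q(1_\q)$ to a different $2$-dimensional $\gamma_\q(\psi_\q)$ inside the $4$-dimensional $H^1(K_\q,E[p])$.

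Concretely, first I would fix $n$ primes $\q_1,\dots,\q_n\in\P_2(E)$, disjoint from $\Sigma$ and from each other, chosen via Chebotarev (Remark \ref{zerorem}) and subject to the additional Chebotarev constraints needed below; pick nontrivial $\psi_j\in\cC(K_{\q_j})$ at each. Using Lemma \ref{cftlemma} (class field theory, with $\Sigma$ as fixed, so $\mathrm{Pic}(\O_{K,\Sigma})=0$) together with Lemma \ref{elem}, I would assemble a global $\chi\in\cC(K)=\Hom(G_K,\mu_p)$ that restricts to $\psi_j$ at each $\q_j$, is unramified outside $\Sigma\cup\{\q_1,\dots,\q_n\}$, and is trivial at the places of $\Sigma$; the product formula obstruction $\prod_v\psi_v(b)=1$ for $b\in\O_{K,\Sigma}^\times$ is handled exactly as in Lemma \ref{elem}(ii), which is why $\Sigma$ was enlarged so that $\O_{K,\Sigma}^\times/(\O_{K,\Sigma}^\times)^p\hookrightarrow\prod_{v\in\Sigma}K_v^\times/(K_v^\times)^p$ — the existence of $N_E\supset M$ from Definition \ref{defnN} and the freedom in choosing the $\q_j$'s with prescribed Frobenius in $G(N_E/K)$ let one kill this obstruction. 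Then I would compute $r_p(E,\chi)$ by a dévissage through the relaxed-then-strict Selmer groups at $\q_1,\dots,\q_n$: by Theorem \ref{dual} each relaxation at a $\q_j\in\P_2$ contributes $+\tfrac12\dim_p H^1(K_{\q_j},E[p])=+2$ to the difference between the relaxed and strict groups, and the global Poitou–Tate / duality bookkeeping shows the net effect of switching from the trivial local condition to $\gamma_{\q_j}(\psi_{\q_j})$ at all $n$ primes simultaneously is precisely $+2n$, provided the local classes being relaxed at each $\q_j$ do not already lie in $\Sel_p(E/K)$ and the successive images $\loc_{\q_j}$ are surjective onto the relevant $2$-dimensional quotients.

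The main obstacle is precisely this last transversality/surjectivity input: one must choose the $\q_j$ so that at each stage the localization map from the current relaxed Selmer group surjects onto $\gamma_{\q_j}(1_{\q_j})$ (equivalently, that the restriction to $H^1(K_{\q_j},E[p])$ of the classes not yet "used up" is large enough), which is where the hypothesis $E(K)[p]\ne 0$ enters: it guarantees a nonzero class in $H^1(K,E[p])$ (coming from $\Hom(G_M,E[p])^{G_K}\ne 0$, or from the image of $E(K)[p]$ under a coboundary) whose localizations at a positive-density set of primes are nonzero, so Chebotarev — applied inside the field $N_E$ of Definition \ref{defnN}, which records the splitting of all relevant Selmer classes — produces the needed $\q_j$. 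Assembling these choices consistently for all $n$ primes at once, while respecting both the duality constraints of Theorem \ref{dual} and the class-field-theoretic constraint of Lemma \ref{cftlemma}, is the technical heart; once the $\q_j$ and $\chi$ are in hand, the equality $r_p(E,\chi)=r_p(E)+2n$ follows from Theorem \ref{mod2} for the parity and from the exact sequences defining $\Sel_p(E/K,\chi)^{\q}$ and $\Sel_p(E/K,\chi)_\q$ for the exact count. Finally, since at each step there remain infinitely many admissible choices of $\q_j$ (positive Chebotarev density), one obtains infinitely many such $\chi$.
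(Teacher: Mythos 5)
Your general framework --- choosing primes in $\P_2(E)$ with Frobenius conditions controlled by the field $N_E$, assembling a global $\chi\in\cC(K)$ by class field theory via Lemmas \ref{cftlemma} and \ref{elem}, and using the relaxed/strict duality of Theorem \ref{dual} for the rank bookkeeping --- is the paper's framework. The paper twists at one prime at a time and inducts, rather than choosing all $n$ primes simultaneously as you propose; induction also sidesteps the ``assembling consistently'' worry you raise, since $\Sigma(\chi')$ and $N_E$ are allowed to grow at each stage. That is a secondary difference of organization.

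The genuine gap is in the step that forces the Selmer rank strictly upward. The prime $\q$ is chosen with $\Frob_\q|_{N_E}=1$, which \emph{kills} the localizations of all classes in the current Selmer group (it does not make the localization map surjective, as your phrasing suggests). After constructing $\chi$ ramified at $\q$, duality plus the parity formula of Theorem \ref{mod2} give only $r_p(E,\chi'\chi)-r_p(E,\chi')\in\{0,2\}$; one must still rule out $0$. You propose that $E(K)[p]\ne 0$ supplies a fixed nonzero class in $H^1(K,E[p])$ with nonzero localizations on a positive-density set of primes, to be hit by a Chebotarev choice of $\q$. This cannot be right: the class that is needed must lie in the \emph{twisted} Selmer group $\Sel_p(E/K,\chi'\chi)$, which does not exist until after $\q$ and $\chi$ have already been fixed, and every class of $\Sel_p(E/K,\chi')$ dies at $\q$ by construction. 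The paper's actual mechanism is local and structural: since $E^{\chi'\chi}[\p]\cong E[p]$ as $G_K$-modules, $E^{\chi'\chi}(K)[\p]\cong E(K)[p]\ne 0$, so there is a nonzero $P\in E^{\chi'\chi}(K)[\p]$ whose Kummer image lies in $\Sel_p(E/K,\chi'\chi)$; because $\q\in\P_2$ and $\chi_\q$ is ramified, Remark \ref{canlem} gives $E^{\chi'\chi}(K_\q)[p^\infty]=E^{\chi'\chi}[\p]$, so $E^{\chi'\chi}[\p]\to E^{\chi'\chi}(K_\q)/\p E^{\chi'\chi}(K_\q)$ is an isomorphism, forcing $\loc_\q(f(P))\ne 0$. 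That nonzero localization shows $\Sel_p(E/K,\chi')\subsetneq\Sel_p(E/K,\chi'\chi)$, giving the jump of exactly $+2$. Without this argument your proposal establishes at most that the rank change is in $\{0,2\}$ at each prime, not the claimed equality $r_p(E,\chi)=r_p(E)+2n$.
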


\begin{proof}
    Fix $\chi'\in\cC(K)$. We claim that there exist infinitely many $\chi''\in\cC(K)$ such that $$r_p(E,\chi'')=r_p(E,\chi')+2.$$ The desired result then follows by induction.
    Let $\Sigma(\chi'):=\Sigma\cup\{\p\mid \cond(\chi')\}$.
    Let $\bar{s}$ denote the image of $s \in \Sel_{p}(E/K,\chi')$ in the restriction map $$\Sel_{p}(E/K,\chi') \hookrightarrow H^1(K,E[p]) \too \Hom(G_{M},E[p]).$$
    Let $N_E$ be the Galois closure of $L_{E,\chi'}K(\sqrt[p]{\O_{K,\Sigma(\chi')}^\times})$ over $K$ where $L_{E,\chi'}$ is the fixed field of $\cap_{s\in\Sel_p(E/K,\chi')}\ker(\bar{s})$. Choose a prime $\q\notin\Sigma(\chi')$ such that $\Frob_\q|_{N_E}=1$. Then $\q\in\P_2(E)$ and $\loc_\q(\Sel_p(E/K,\chi'))=0$ by Lemma \ref{plem}.
    Put $\psi \in \prod_{v \in \Sigma(\chi')} \Hom(K_v^\times, \mu_p) \times \prod_{v \notin \Sigma(\chi')} \Hom(\O_v^\times, \mu_p)$ so that 
    \begin{itemize}
        \item $\psi_v=1_v$ for $v\in \Sigma(\chi')$,
        \item $\psi_\q$ is not trivial, and
        \item $\psi_\p$ is trivial for $\p\notin\Sigma(\chi')\cup\{\q\}$.
    \end{itemize}
    Since $K(\sqrt[p]{\O_{K,\Sigma(\chi')}^\times}) \subset N_E$ and $\Frob_\q|_{N_E}=1$, we have $\psi_\q(\O_{K,\Sigma(\chi')}^\times)=1$. Hence, by Lemma \ref{cftlemma}, there exists $\chi \in \cC(K)$ such that \begin{itemize}
        \item $\chi_v=1_v$ for $v\in\Sigma(\chi')$,
        \item $\chi_\q$ is ramified,
        \item $\chi_\p$ is unramified for $\p\notin\Sigma(\chi')\cup\{\q\}$.
    \end{itemize}
     Observe that $$\Sel_p(E/K,\chi')=\Sel_p(E/K,\chi')_\q \subset \Sel_p(E/K,\chi'\chi) \subset \Sel_p(E/K,\chi')^\q.$$
    Consider the following two exact sequences : 
    \begin{equation}
\raisebox{19pt}{
\xymatrix@C=12pt@R=7pt{
0 \ar[r] & \Sel_p(E/K,\chi') \ar[r] & \Sel_p(E/K,\chi')^{\q} \ar^-{\loc_\q}[rr] 
   && \displaystyle\frac{H^1(K_\q,E[p])}{\gamma_\q(\chi'_\q)} \\
0 \ar[r] & \Sel_p(E/K,\chi')_{\q} \ar[r] & \Sel_p(E/K,\chi') \ar^-{\loc_\q}[rr] 
   && \gamma_\q(\chi'_\q).
}}
\end{equation}
Then the images of the two right-hand maps are orthogonal complements of each other under the sum of the local Tate pairings, by Poitou-Tate global duality. (See \cite[Theorem 2.3.4]{kolysys}.)
Since $\q\in\P_2(E), $$$\dim_p(\Sel_p(E/K,\chi')^{\q}) - \dim_p(\Sel_p(E/K,\chi')_{\q}) = \displaystyle\frac{1}{2}\dim_p(H^1(K_\q,E[p]))=2.$$
On the other hand, if $v\ne\q$, then $\gamma_v(\chi_v)=\gamma_v(\chi_v'\chi_v)$ by Proposition \ref{localcond}. Note that $\chi'_\q$ is unramified. Hence, by and Lemma \ref{4.6} and \cite[Theorem 1.4]{alc}, 
\begin{align*}
    r_p(E,\chi'\chi)-r_p(E,\chi') &\equiv \sum_{v\in S} \dim_p(\gamma_v(\chi_v')/\gamma_v(\chi_v'\chi_v)\cap\gamma_v(\chi'_v)) &\pmod{2} \\
    &\equiv \dim_p(\gamma_\q(\chi_\q')/\gamma_\q(\chi'_\q\chi_\q)\cap\gamma_\q(\chi_\q')) &\pmod{2}\\ 
    &\equiv \dim_p(\gamma_\q(\chi'_\q)) \equiv 0, &\pmod{2}
\end{align*} where $S$ is a set of primes of $K$ containing all primes above $p$, all primes dividing $\cond(\chi')\cond(\chi)$, and all primes where $E$ has bad reduction.
By our construction of $\q$, $\Sel_p(E,\chi')_\q=\Sel_p(E,\chi')$, and hence either $$ r_p(E,\chi'\chi)=r_p(E,\chi')\quad \text{or}\quad r_p(E,\chi'\chi)=r_p(E,\chi')+2.$$
Let $\chi'':=\chi'\chi$. 
Let $f$ be the composition
$$f : E^{\chi''}(K) \too E^{\chi''}(K)/\p E^{\chi''}(K) \too H^1(K,E^{\chi''}[\p]).$$
Since $E[p]\cong E^{\chi''}[\p]$ as $G_K$-modules, there exists $P\in E^{\chi''}(K)[\p]$ such that $P\ne 0$.
Observe that the following diagram commutes : \begin{equation*}
    \begin{tikzcd}[column sep=small]
        E^{\chi''}(K)[\p] \arrow[r,"f"]\arrow[d, hook] & \Sel_p(E/K,{\chi''}) 
        \arrow[r, phantom, "="] & \Sel_p(E/K,{\chi''}) \arrow[r, hook] & H^1(K,E[p]) \arrow[d, "\loc_\q"] \\
        E^{\chi''}[\p] \arrow[r,"\sim"] & E^{\chi''}(K_\q)/\p E^{\chi''}(K_\q) 
        \arrow[r, hook] & H^1(K_\q,E^{\chi''}[\p]) \arrow[r,"\sim"] & H^1(K_\q,E[p])
    \end{tikzcd}
\end{equation*}
    Note that $E^{\chi''}[\p]\cong E^{\chi''}(K_\q)/\p E^{\chi''}(K_\q)$ canonically by Remark \ref{canlem}. By diagram chasing, $\loc_\q(f(P))\ne 0$ and thus $\Sel_p(E/K,\chi') \subsetneq \Sel_p(E,\chi'')$. Hence $r_p(E,\chi'')=r_p(E,\chi')+2$.
\end{proof}
\section{proof of Theorem \ref{tsncthm}}

For the reader's convenience, we restate Theorem \ref{tsncthm} below.
\begin{thm}
    Let $E_1$ and $E_2$ be elliptic curves over a number field $K$. Assume that there is a $G_K$-module isomorphism $E_1[p]\cong E_2[p]$ for $p\ge 3$. Then $E_1$ and $E_2$ are $p$-TSNC over $K$.
\end{thm}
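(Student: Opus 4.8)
The plan is to show that the difference $|r_p(E_1,\chi) - r_p(E_2,\chi)|$ is bounded independently of $\chi$, using the fixed $G_K$-isomorphism $\phi\colon E_1[p]\xrightarrow{\sim} E_2[p]$ to identify $H^1(K,E_1[p])$ with $H^1(K,E_2[p])$. Under this identification both twisted Selmer groups $\Sel_p(E_1/K,\chi)$ and $\Sel_p(E_2/K,\chi)$ sit inside the \emph{same} cohomology group $H^1(K,V)$, where $V:=E_1[p]\cong E_2[p]$, and they are cut out by local conditions $\gamma_v^{(1)}(\chi_v)$ and $\gamma_v^{(2)}(\chi_v)$ respectively. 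The key point is that the two families of local conditions agree at all but finitely many places, regardless of $\chi$.

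The key steps are as follows. First I would enlarge $\Sigma$ so that it contains all places of bad reduction for $E_1$ \emph{and} $E_2$, all places above $p\infty$, and enough places to satisfy the standing hypotheses on $\Sigma$. Then, for a place $v\notin\Sigma$: if $\chi_v$ is unramified, Proposition \ref{localcond} identifies $\gamma_v^{(i)}(\chi_v)$ with $\Hu(K_v,E_i[p])\cong E_i[p]/(\Frob_v-1)E_i[p]$ by evaluation at $\Frob_v$, and since $\phi$ is $G_K$-equivariant it carries $\Hu(K_v,E_1[p])$ isomorphically onto $\Hu(K_v,E_2[p])$; so $\gamma_v^{(1)}(\chi_v)=\gamma_v^{(2)}(\chi_v)$ under the identification. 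If $\chi_v$ is ramified at some $v\notin\Sigma$ — there are only finitely many such $v$ for a given $\chi$, but their number is unbounded as $\chi$ varies, so this case must be handled uniformly — then $v$ lies in $\P_0$, $\P_1$, or $\P_2$ for $E_1$ (equivalently, by Lemma \ref{plem} and the fact that $\phi$ identifies the Frobenius action, for $E_2$). In the $\P_0$ case both local conditions are $0$ by Remark \ref{zerorem}. In the $\P_2$ case Lemma \ref{4.6} shows $\gamma_v^{(i)}(\chi_v)=\Hom(G(F/K_v),E_i[p])$ where $F$ is the cyclic extension cut out by $\chi_v$, and again $\phi$ identifies these; so they agree. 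The genuinely delicate case is $v\in\P_1$ with $\chi_v$ ramified: here $\gamma_v^{(i)}(\chi_v)$ need not equal $\Hu$, but one can bound $\dim_p\gamma_v^{(i)}(\chi_v)$ and the discrepancy between the two conditions by a constant (each is contained in $H^1(K_v,E_i[p])$ which has bounded dimension, and in fact $\dim_p H^1(K_v,E_i[p])=2\dim_p E_i[p]^{\Frob_v=1}=2$ for $\P_1$-places), and — crucially — only finitely many places $v$ can be $\P_1$-places with $\chi_v$ ramified while simultaneously contributing to the defining conditions of either Selmer group, because at a $\P_1$-place the local condition has dimension $1$ rather than $1$... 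Actually I would instead argue via the relaxed/strict Selmer groups of Theorem \ref{dual}: relaxing both $\Sel_p(E_i/K,\chi)$ at the (finitely many, but $\chi$-dependent) set $S_\chi$ of $\P_1$-places where $\chi$ ramifies, and using that $\dim_p\Sel^{S_\chi}-\dim_p\Sel_{S_\chi}$ is controlled by $\sum_{v\in S_\chi}\dim_p H^1(K_v,V)$, which is \emph{not} bounded — so this naive approach fails, and one must use that at a $\P_1$-place $\gamma_v(\chi_v)$ and $\gamma_v(1_v)=\Hu$ have the \emph{same} dimension (namely $1$) and differ in a controlled way; combined with Theorem \ref{mod2} and the parity/orthogonality machinery, the net effect of each ramified $\P_1$-place on $r_p(E_i,\chi)-r_p(E_i)$ is the same for $i=1,2$ because it depends only on $h_v(\chi_v)$, which is computed from the $G_{K_v}$-module $E_i[p]\cong V$ alone.

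Putting this together: $r_p(E_i,\chi)-r_p(E_i)\equiv\sum_v h_v(\chi_v)\pmod 2$ by Theorem \ref{mod2}, and $h_v(\chi_v)$ depends only on $(E_i[p]$ as a $G_{K_v}$-module$)$ and $\chi_v$ — hence is equal for $i=1$ and $i=2$ at every $v$ — so the parities match: $r_p(E_1,\chi)-r_p(E_1)\equiv r_p(E_2,\chi)-r_p(E_2)\pmod 2$. For the actual (not just mod-$2$) bound, I would compare each $\Sel_p(E_i/K,\chi)$ to the common "$\Sigma$-relaxed, $\chi$-away-from-$\Sigma$" Selmer group: since the local conditions at $v\notin\Sigma$ agree between $E_1$ and $E_2$ under $\phi$ (by the case analysis above, using that a $\chi_v$-ramified $\P_1$-place imposes on $H^1(K,V)$ a condition determined by $V|_{G_{K_v}}$ and $\chi_v$ alone), the two Selmer groups $\Sel_p(E_1/K,\chi)$ and $\Sel_p(E_2/K,\chi)$ differ only through the local conditions at $v\in\Sigma$, a fixed finite set; each such local condition changes the dimension by at most $\dim_p H^1(K_v,V)$, a constant independent of $\chi$. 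Therefore $|r_p(E_1,\chi)-r_p(E_2,\chi)|\le C$ with $C:=\sum_{v\in\Sigma}\dim_p H^1(K_v,V)$ (or a similar explicit constant), which is exactly the $p$-TSNC condition.

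The main obstacle I anticipate is the uniform treatment of the ramified $\P_1$-places: the set of such places depends on $\chi$ and is unbounded in size, so one cannot simply absorb them into a finite exceptional set. The resolution is that although there are many such places, the discrepancy they introduce between the $E_1$- and $E_2$-Selmer conditions is \emph{zero} (not merely bounded) once one uses the $G_K$-isomorphism $\phi$, because the local condition $\gamma_v^{(i)}(\chi_v)\subset H^1(K_v,E_i[p])$ is intrinsic to the $G_{K_v}$-module $E_i[p]$ together with $\chi_v$ — and these data are literally identified by $\phi$ for $i=1,2$. Making this last identification precise (that $\gamma_v(\chi_v)$ is a functorial invariant of $(E[p],\chi_v)$ at places $v\nmid p\infty$ of good reduction, even when $\chi_v$ is ramified and $v\in\P_1$) is the technical heart of the argument, and it rests on the description of $E^\chi$ and its local points in Definition \ref{defnsel} together with the fact that $E^\chi[\p]\cong E[p]$ canonically.
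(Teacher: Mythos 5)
Your overall framework matches the paper's: you use $\phi$ to identify $H^1(K,E_1[p])$ with $H^1(K,E_2[p])$, argue that the local conditions $\gamma_v^{(1)}(\chi_v)$ and $\gamma_v^{(2)}(\chi_v)$ coincide outside a fixed finite set $\Sigma$ of bad places, and bound $|r_p(E_1,\chi)-r_p(E_2,\chi)|$ by $\sum_{v\in\Sigma}\dim_p H^1(K_v,E_1[p])$, a constant independent of $\chi$. Your treatment of the unramified, $\P_0$-ramified, and $\P_2$-ramified cases is sound. But there is a genuine gap at $\P_1$-places where $\chi_v$ is ramified: you run through several attempts (relaxing/stricting at the $\chi$-dependent set $S_\chi$, the parity formula of Theorem \ref{mod2}), correctly note they either fail or give only a mod-$2$ statement, and finally assert that ``$\gamma_v(\chi_v)$ is a functorial invariant of $(E[p],\chi_v)$'' as the resolution, calling it ``the technical heart'' without supplying the proof. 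That assertion is exactly what has to be established, and it is not a formality: $\gamma_v(\chi_v)$ is by definition the image of $E^{\chi}(K_v)/\p E^{\chi}(K_v)$ under the Kummer map, and the Weil restriction $E^\chi$ depends on the full curve $E$ over the corresponding extension, not merely on the $G_{K_v}$-module $E[p]$. The canonical isomorphism $E^\chi[\p]\cong E[p]$ identifies the targets of the two Kummer maps, but not their sources.

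The paper closes this gap with a short argument that also makes your case split by $\P_0/\P_1/\P_2$ unnecessary at ramified places. For any $v\notin\Sigma$ with $\chi_v$ ramified, cutting out $F/K_v$, one has $\gamma_v^{(i)}(\chi_v)\subseteq\ker\bigl(H^1(K_v,E_i[p])\to H^1(F,E_i[p])\bigr)$ by \cite[Lemma 7.4]{ds}. Since $v\nmid p$ and $E_i$ has good reduction at $v$, inertia acts trivially on $E_i[p]$, and $F/K_v$ is totally ramified, so $E_i[p]^{G_F}=E_i[p]^{G_{K_v}}$; inflation--restriction then identifies the kernel with $\Hom(G(F/K_v),E_i(K_v)[p])$, which visibly depends only on the $G_{K_v}$-module $E_i[p]$. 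The dimension count $\dim_p\gamma_v^{(i)}(\chi_v)=\dim_p E_i(K_v)[p]=\dim_p\Hom(G(F/K_v),E_i(K_v)[p])$ upgrades the inclusion to equality, and the identification under $\phi$ follows. This settles the $\P_1$ case (and $\P_2$) in one stroke; the parity digression in your draft can be dropped, and once $\gamma_v^{(1)}(\chi_v)=\gamma_v^{(2)}(\chi_v)$ for all $v\notin\Sigma$ is in hand, the $\Sigma$-relaxed comparison you describe at the end gives the stated bound.
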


\begin{proof}
Let $S$ be a finite set of places of $K$ containing all places where $E_1$ or 
$E_2$ has bad reduction and all places above $p$ and~$\infty$. 
Fix $\chi \in \mathcal C(K)$. For $E \in \{E_1,E_2\}$, define
\[
  \Sel_p(E/K,\chi)^S
  := 
  \ker\!\left(
    H^1(K,E[p]) \longrightarrow 
    \bigoplus_{v \notin S}
      H^1(K_v,E[p])/\gamma_v(\chi_v)
  \right).
\]
Then we have an exact sequence
\[
  0 \longrightarrow \Sel_p(E/K,\chi)
  \longrightarrow \Sel_p(E/K,\chi)^S
  \longrightarrow 
  \bigoplus_{v\in S} H^1(K_v,E[p])/\gamma_v(\chi_v).
\]

We claim that $\gamma_v^1(\chi_v)=\gamma_v^2(\chi_v)$ for all $v\notin S$, where
\[
  \gamma_v^i(\chi_v)
  :=
  \operatorname{im}\!\left(
    E_i^\chi(K_v)/\p E_i^\chi(K_v)
    \longrightarrow 
    H^1(K_v,E_i^\chi[\p])
    \cong H^1(K_v,E_i[p])
  \right).
\]
    Using the given $G_K$-isomorphism $E_1[p]\cong E_2[p]$, we identify both with 
$E[p]$. In particular,
\[
  H^1(K_v,E_i[p]) = H^1(K_v,E[p])
  \quad\text{and}\quad
  H^1(K,E_i[p]) = H^1(K,E[p])
\]
for $i=1,2$. Hence if 
$\gamma_v^1(\chi_v)=\gamma_v^2(\chi_v)$ for all $v\notin S$, then
\[
  \Sel_p(E_1/K,\chi)^S = \Sel_p(E_2/K,\chi)^S.
\]

Set
\[
  C := \prod_{v\in S} \lvert H^1(K_v,E[p]) \rvert.
\]
Then $C$ is independent of $\chi$, and therefore $E_1$ and $E_2$ are 
$p$-TSNC over $K$.

    Now, if $\chi_v$ is unramified, then $\gamma_v^i(\chi_v)=\Hu(K_v,E[p])$ for $i=1,2$. Assume that $\chi_v$ is ramified. However, by \cite[Lemma 7.4.]{ds}, \begin{equation} \label{gammaeq}
        \gamma_v^i(\chi_v)\subset \ker(H^1(K_v,E[p]) \to H^1(F,E[p])),
    \end{equation} where $F$ is the cyclic extension of $K_v$ corresponding to $\chi_v$. Since $E/K_v$ has good reduction, the inertia group acts trivially on $E[p]$, we have $E[p]^{G_F}=E[p]^{G_{K_v}}$. Thus, \begin{align*}
        \ker(H^1(K_v,E[p])\too H^1(F,E[p]))\\=H^1(F/K_v,E[p]^{G_{K_v}})=&\Hom(G(F/K_v),E(K_v)[p]).
    \end{align*} By \cite[Lemma 7.2.]{ds}, since $$\dim_p(\gamma^i_v(\chi_v))=\dim_p(E(K_v)[p])=\dim_p(\Hom(G(F/K_v),E(K_v)[p])),$$ the inclusion \eqref{gammaeq} is equal. This implies that $\gamma_v^i(\chi_v)$ is independent of $i$. Hence, $\gamma_v^1(\chi_v)=\gamma_v^2(\chi_v)$ for all $v\notin S.$
\end{proof}


\section{The case \texorpdfstring{$E_1[p]\subset E_1(K)$}{E1[p] subset E1(K)}}
For the rest of this paper, we assume that $\Sigma$ is a finite set of places of $K$ 
containing all places where $E_1$ or $E_2$ has bad reduction, all places dividing $p\infty$, 
and is sufficiently large so that $\Sigma$ satisfies conditions \eqref{Sigma1} and \eqref{Sigma2}.  
For $\chi\in\mathcal{C}(K)$, set
$$\Sigma(\chi) := \Sigma \cup \{\mathfrak{p} \mid \cond(\chi)\}.$$

In this section, we assume that $E_1[p]\subset E_1(K)$ and we let $M := K(E_2[p])$.  
Recall that for any $\chi\in \mathcal{C}(K)$, the element $\bar{t}$ denotes the image of 
$t \in \Sel_p(E_2/K, \chi)$ under the restriction map
$$\Sel_p(E_2/K,\chi) \hookrightarrow H^1(K, E_2[p]) \longrightarrow \Hom(G_M, E_2[p]).$$

\begin{prop} \label{4.3}
    Suppose that $[M:K]\nmid p$. Fix $\chi_0\in\cC(K)$. Then there are infinitely many $\chi\in\cC(K)$ such that $r_p(E_1,\chi\chi_0)=r_p(E_1,\chi_0)+2$ and $r_p(E_2,\chi\chi_0)=r_p(E_2,\chi_0)$.
\end{prop}

\begin{proof}
    Fix $\chi_0.$
    Let $N$ be the Galois closure of $L_{E_1,\chi_0}K(\sqrt[p]{\O_{K,\Sigma(\chi_0)}^\times})$ over $K$. Then $N/K$ is a finite $p$-power extension. Since $[M:K]\nmid p$, $N$ and $M$ are linearly disjoint. Choose a prime $\q\notin\Sigma(\chi_0)$ such that $\Frob_\q|_N=1$ and $(\Frob_\q|_M)^p\ne 1$. Then $\loc_\q(\Sel_p(E_1/K,\chi_0))=0$ and $\q\in\P_2(E_1)\cap\P_0(E_2)$ by Lemma \ref{plem}. Put $\psi \in \prod_{v \in \Sigma(\chi_0)} \Hom(K_v^\times, \mu_p) \times \prod_{v \notin \Sigma(\chi_0)} \Hom(\O_v^\times, \mu_p)$ so that 
    \begin{itemize}
        \item $\psi_v=1_v$ for $v\in \Sigma(\chi_0)$,
        \item $\psi_\q$ is not trivial, and
        \item $\psi_\p$ is trivial for $\p\notin\Sigma(\chi_0)\cup\{\q\}$.
    \end{itemize}
    Since $K(\sqrt[p]{\O_{K,\Sigma(\chi_0)}^\times}) \subset N$ and $\Frob_\q|_{N}=1$, we have $\psi_\q(\O_{K,\Sigma(\chi_0)}^\times)=1$. Hence, by Lemma \ref{cftlemma}, there exists $\chi \in \cC(K)$ such that \begin{itemize}
        \item $\chi_v=1_v$ for $v\in\Sigma(\chi_0)$,
        \item $\chi_\q$ is ramified,
        \item $\chi_\p$ is unramified for $\p\notin\Sigma(\chi_0)\cup\{\q\}$.
    \end{itemize} 
    Since $\loc_\q(\Sel_p(E_1/K,\chi_0))=0$, by the proof of Theorem \ref{firstthm}, 
    $$r_p(E_1,\chi\chi_0)=r_p(E_1,\chi_0)+2.$$
    By noting that $\q\in\P_0(E_2)$ and Remark \ref{zerorem}, we also have that $$r_p(E_2,\chi\chi_0)=r_p(E_2,\chi_0). \qedhere$$
\end{proof}

Moreover, using apply Proposition \ref{4.3} and using induction, we have following corollary.

\begin{cor} \label{cor1}
    If $[M:K]\nmid p$, then there are infinitely many $\chi\in\cC(K)$ such that $r_p(E_1,\chi)=r_p(E_1)+2d$ and $r_p(E_2,\chi)=r_p(E_2)$.
\end{cor}

\begin{prop} \label{4.1}
    Suppose that $[M:K]=p$. Fix $\chi_0\in\cC(K)$. Then there exist infinitely many $\chi\in\cC(K)$ such that \begin{enumerate}
        \item $r_p(E_1,\chi\chi_0)=r_p(E_1,\chi_0)+2$,
        \item $r_p(E_2,\chi\chi_0)=r_p(E_2,\chi_0)+2$,
        \item \label{kerprop1}there exists $t\in\Sel_p(E_2/K,\chi\chi_0)$ so that the fixed field of $\ker(\bar{t})$ is not contained in $K(\sqrt[p]{K^\times}).$ 
    \end{enumerate}
\end{prop}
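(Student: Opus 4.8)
The plan is to mimic the construction in the proof of Theorem~\ref{firstthm}, but to keep track of two curves simultaneously and, in addition, to arrange the extra ramification so that the new Selmer class for $E_2$ has a nontrivial restriction to $G_M$ of the right type. First I would set up the auxiliary field: let $N$ be the compositum of $M$, the field $K(\sqrt[p]{\O_{K,\Sigma(\chi_0)}^\times})$, and the fixed fields $L_{E_1,\chi_0}$, $L_{E_2,\chi_0}$ associated to the twisted Selmer groups of $E_1$ and $E_2$ (cf.\ Definition~\ref{defnN}); since $E_1[p]\subset E_1(K)$ we have $M_1 = K$, so $L_{E_1,\chi_0}/K$ is already abelian of exponent $p$. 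Because $[M:K]=p$, the key geometric input is that $M \not\subset K(\sqrt[p]{K^\times})$ is \emph{possible} to fail, and we must actually use Chebotarev on $G(N'/K)$ for a suitably chosen $N' \supset N$ that also records the field $K(\sqrt[p]{K^\times})\cdot M$ so that we can force a Frobenius that is trivial on $N$ but nontrivial on the subextension detecting (iii). Concretely I would pick a prime $\q \notin \Sigma(\chi_0)$ with $\Frob_\q|_N = 1$ (so $\q \in \P_2(E_1)\cap\P_2(E_2)$ by Lemma~\ref{plem}, using $M_1 = K$ and $\Frob_\q|_M = 1$), and then run the argument of Theorem~\ref{firstthm} twice: construct $\chi \in \cC(K)$ ramified exactly at $\q$, trivial on $\Sigma(\chi_0)$, unramified elsewhere, via Lemma~\ref{cftlemma}, and conclude $r_p(E_i/K,\chi\chi_0) = r_p(E_i/K,\chi_0) + 2$ for $i=1,2$ by the Poitou--Tate duality / parity computation exactly as there (the new class $f(P_i)$ with $P_i \in E_i^{\chi\chi_0}(K)[\p]$ nonzero has $\loc_\q(f(P_i)) \ne 0$).

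For part (iii), the new class $t := f(P_2) \in \Sel_p(E_2/K,\chi\chi_0)$ restricts nontrivially to $G_M$: indeed its image in $H^1(M,E_2[p]) = \Hom(G_M,E_2[p])$ is nonzero because $\loc_\q(t)\ne 0$ and $\Frob_\q$ splits completely in $M$, so the local-at-$\q$ restriction factors through $G_M$. It remains to see that the fixed field of $\ker(\bar t)$ is not inside $K(\sqrt[p]{K^\times})$. This is where I would exploit the ramification behavior: the class $t$ is, by construction of $\gamma_\q(\chi_\q)$ via $E_2^{\chi\chi_0}(K_\q)$, ramified at $\q$ in a way that the extension $M(\bar t)/M$ it cuts out is ramified at primes above $\q$ but $K(\sqrt[p]{K^\times})$ is (after adjusting $\Sigma$) unramified outside $\Sigma$; alternatively, and more robustly, I would choose the Chebotarev condition so that $\Frob_{\q}$ acts on the relevant $\F_p$-line detecting membership in $K(\sqrt[p]{K^\times})\cdot M$ nontrivially. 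Since $[M:K]=p$ and $M \subset K(\sqrt[p]{K^\times})$ would force $M$ to be one of the finitely many degree-$p$ Kummer subextensions, picking $\q$ to avoid the corresponding Frobenius conjugacy classes (which is compatible with $\Frob_\q|_N=1$ precisely because that subextension is \emph{not} contained in $N$ as an abstract constraint — we enlarge $N$ to include it and choose $\Frob_\q$ nontrivial there while trivial on the part of $N$ we need) gives the conclusion. Running this over infinitely many such $\q$ (positive density by Chebotarev) produces infinitely many $\chi$.

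The main obstacle I anticipate is precisely the compatibility in part (iii): we need a single Frobenius element that is simultaneously trivial on the large field $N$ (to get the rank jump for both curves and to ensure $\q \in \P_2$) and nontrivial on the extension that witnesses ``not contained in $K(\sqrt[p]{K^\times})$.'' This forces a careful analysis of how $L_{E_2,\chi\chi_0}$ sits relative to $M\cdot K(\sqrt[p]{K^\times})$ — in particular one must check that the new line spanned by $\bar t$ in $\Hom(G_M,E_2[p])$ does not land inside the subspace of homomorphisms factoring through $G(M\cdot K(\sqrt[p]{K^\times})/M)$, which is a statement about the image of $\loc_\q$ and the local Kummer theory at $\q$. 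I would handle this by noting that the homomorphism $\bar t$ is \emph{ramified} at a prime of $M$ above $\q$ while every class coming from $K(\sqrt[p]{K^\times})$ is unramified outside $\Sigma$ (having enlarged $\Sigma$ appropriately and chosen $\q\notin\Sigma$), which cleanly separates the two and yields (iii).
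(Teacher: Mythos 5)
Your treatment of parts (i) and (ii) matches the paper's: take $N$ to be the Galois closure of $L_{E_1,\chi_0}L_{E_2,\chi_0}K(\sqrt[p]{\O_{K,\Sigma(\chi_0)}^\times})$ over $K$, pick $\q$ with $\Frob_\q|_N=1$ so $\q\in\P_2(E_1)\cap\P_2(E_2)$ and $\loc_\q$ kills both $\chi_0$-Selmer groups, manufacture $\chi$ ramified exactly at $\q$ via Lemma~\ref{cftlemma}, and invoke the argument of Theorem~\ref{firstthm} for each curve. That part is fine.

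Part (iii) is where there is a genuine gap, and both fallback strategies you offer rest on false premises. First, $K(\sqrt[p]{K^\times})$ is \emph{not} unramified outside $\Sigma$: for any prime $\q$ of $K$ and any $\pi\in K^\times$ with $\ord_\q(\pi)=1$, the subfield $K(\sqrt[p]{\pi})\subset K(\sqrt[p]{K^\times})$ is ramified at $\q$; so $K(\sqrt[p]{K^\times})$ is ramified at every prime, and ``$\bar t$ is ramified at $\q$ while classes from $K(\sqrt[p]{K^\times})$ are unramified outside $\Sigma$'' does not separate anything. Second, $K(\sqrt[p]{K^\times})$ has \emph{infinitely} many degree-$p$ subextensions (one for each line in the infinite-dimensional $\F_p$-space $K^\times/(K^\times)^p$), so ``finitely many Kummer subextensions to avoid by Chebotarev'' is also wrong; and in any case the field cut out by $\bar t$ depends on $\chi$, which depends on $\q$, so you cannot impose a Chebotarev condition on $\q$ that refers to a field defined only after $\q$ is chosen.

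The paper's actual mechanism is different in kind. After getting the rank jump, one fixes $\sigma\in G_{K_\q}\subset G_M$ generating $G(L/K)$ (with $L$ the field of $\chi\chi_0$) and first shows, using Lemma~\ref{4.6} and the strict-Selmer exact sequence at $\q$, that some $t\in\Sel_p(E_2/K,\chi\chi_0)$ has $\bar t(\sigma)\ne 0$; the point is that if $\bar t(\sigma)=0$ for all $t$ then $\loc_\q(\Sel_p(E_2/K,\chi\chi_0))=0$, forcing $\Sel_p(E_2/K,\chi\chi_0)=\Sel_p(E_2/K,\chi_0)$, contradicting the rank jump. Then (iii) is proved by a group-theoretic computation exploiting two structural facts you do not use: (a) $G(K(\sqrt[p]{K^\times})/K)$ has exponent $p$, so if the fixed field of $\ker(\bar t)$ lay inside $K(\sqrt[p]{K^\times})$ one would have $\bar t(\tau^p)=0$ for \emph{every} $\tau\in G_K$; and (b) since $[M:K]=p$ and $E_2(K)[p]\ne 0$, any $\tau\in G_K$ with $\tau|_M\ne 1$ acts on $E_2[p]$ as a nontrivial unipotent matrix $\bigl(\begin{smallmatrix}1&a\\0&1\end{smallmatrix}\bigr)$. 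Expanding the cocycle relation for $t(\tau^p\tau^{-p}\sigma)$ and using $(1+\tau+\cdots+\tau^{p-1})$ annihilating the relevant vector then forces $\bar t(\sigma)=0$, contradicting the previous paragraph. None of this — neither the exponent-$p$ property of the full Kummer extension nor the unipotent matrix identity — appears in your sketch, and without it (iii) remains unproved.
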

\begin{proof}
    Fix $\chi_0.$
    Let $N$ be the Galois closure of $L_{E_1,\chi_0}L_{E_2,\chi_0}K(\sqrt[p]{\O_{K,\Sigma(\chi_0)}^\times})$ over $K$. Then $N/K$ is a finite $p$-power extension. Choose a prime $\q\notin\Sigma(\chi_0)$ such that $\Frob_\q|_N=1$. Then $\loc_\q(\Sel_p(E_i/K,\chi_0))=0$ for $i=1,2$ and $\q\in\P_2(E_1)\cap\P_2(E_2)$ by Lemma \ref{plem}. Put $$\psi \in \prod_{v \in \Sigma(\chi_0)} \Hom(K_v^\times, \mu_p) \times \prod_{v \notin \Sigma(\chi_0)} \Hom(\O_v^\times, \mu_p)$$ so that 
    \begin{itemize}
        \item $\psi_v=1_v$ for $v\in \Sigma(\chi_0)$,
        \item $\psi_\q$ is not trivial, and
        \item $\psi_\p$ is trivial for $\p\notin\Sigma(\chi_0)\cup\{\q\}$.
    \end{itemize}
    Since $K(\sqrt[p]{\O_{K,\Sigma(\chi_0)}^\times}) \subset N$ and $\Frob_\q|_{N}=1$, we have $\psi_\q(\O_{K,\Sigma(\chi_0)}^\times)=1$. Hence, by Lemma \ref{cftlemma}, there exists $\chi \in \cC(K)$ such that \begin{itemize}
        \item $\chi_v=1_v$ for $v\in\Sigma(\chi_0)$,
        \item $\chi_\q$ is ramified,
        \item $\chi_\p$ is unramified for $\p\notin\Sigma(\chi_0)\cup\{\q\}$.
    \end{itemize} Then, by the proof of Theorem \ref{firstthm}, $r_p(E_i/K,\chi\chi_0)=r_p(E_i/K,\chi_0)+2$.
    In this setting, by Lemma \ref{4.6}, we have the following exact sequence:
    $$0 \too \Sel_p(E_2/K,\chi_0)\too\Sel_p(E_2/K,\chi\chi_0)\too \Hom(G(L_\q/K_\q),E_2[p])\to 0,$$ where $L_\q/K_\q$ is the cyclic extension corresponding to $\chi_\q.$ 
    Since $$r_p(E_2,\chi\chi_0)=r_p(E_2,\chi_0)+2 \quad\text{and} \quad \dim_p(\Hom(G(L_\q/K_\q),E_2[p]))=2,$$ the right-hand map is surjective, clearly. Let $t\in\Sel_p(E_2/K,\chi\chi_0)$ be such that $\image(\loc_\q(t))$ contains $Q\in E_2[p]$, which is not in $E_2(K)[p]$ and let $F$ denote the fixed field of $\ker(\bar{t}).$ 
    Since $\Frob_\q|_M=1$, $\loc_\q(\bar{t})=\loc_\q(t)$. Thus, $\image(\bar{t})$ contains $Q$. 
    On the other hand, since $t\in H^1(K,E_2[p]),$
    $\bar{t}$ is $G_K$-equivariant, and in particular the image of $\bar{t}$ is stable under $G_K$. Thus, $\bar{t}$ is surjective. 
    Indeed, let $\{P_1, P_2\}$ be a basis of $E_2[p]$ with $E_2(K)[p]\cong \Z/p\Z\cong\langle P_1\rangle.$ Let $Q=aP_1+bP_2$ with $b\ne 0.$ Then, for $\tau=\begin{bmatrix}
        1 & 1 \\
        0 & 1
    \end{bmatrix}$ in $G(M/K)$, $\tau(Q)=(a+b)P_1+bP_2$ is in the image of $\bar{t}.$ This implies that the image of $\bar{t}$ contains $bP_1$. Thus, the image of $\bar{t}$ contains $P_1$. Therefore, $\image(\bar{t})=E_2[p].$

    Now, we have an exact sequence $$0\too E_2[p]\cong G(F/M)\too G(F/K) \too G(M/K)\cong \Z/p\Z\too 0.$$ We claim that $G(F/K)$ is nonabelian. Let $g\in G(F/K)$ be an element such that $\bar{g}=\tau\in G(M/K)$, where $\tau$ acts like $\begin{bmatrix}
        1 & 1 \\
        0 & 1
    \end{bmatrix}$ on $E_2[p]$ by choosing an $\F_p$-basis of $E_2[p]$. Consider $c_g : G(F/M)\too G(F/M), \sigma \mapsto g\sigma g^{-1}$. Since $G(F/M)$ is normal in $G(F/K)$, the map is well-defined. Let $\sigma_2$ be the element in $G(F/M)$ such that $\bar{t}(\sigma_2)=P_2.$
    Then $$\bar{t}(g\sigma_2 g^{-1})=g\cdot\bar{t}(\sigma_2)\ne P_2=\bar{t}(\sigma_2).$$ Thus, $$g\sigma_2g^{-1}\ne \sigma_2.$$ This implies that $G(F/K)$ is not abelian. Therefore, the fixed field $\ker(\bar{t})$ is not contained in $K(\sqrt[p]{K^\times}).$
\end{proof}

\begin{lem} \label{nonzeroloc}
    Let $E/K$ be an elliptic curve and let $\q\in\P_2(E).$ Assume that $\loc_\q(\Sel_p(E/K))\ne 0.$ Then for any ramified character $\psi_\q\in\cC(K)$, $$r_p(E,\psi_\q)\le r_p(E).$$
\end{lem}

\begin{proof}
    Note that $\Sel_p(E/K)_\q\subset\Sel_p(E/K),\Sel_p(E/K,\psi_\q)\subset\Sel_p(E/K)^\q.$ Let $$C:=\Sel_p(E/K)/\Sel_p(E/K)_\q \quad \text{and} \quad D:=\Sel_p(E/K,\psi_\q)/\Sel_p(E/K)_\q.$$ By Theorem \ref{dual}, $\dim_p(C+D)\le 2.$ By Proposition \ref{ramunr}, $\dim_p(C\cap D)=0.$ Since $\loc_\q(\Sel_p(E/K))\ne 0$, $\dim_p(C)\ge1$ and thus, $\dim_p(D)\le 1.$ Hence $r_p(E,\psi_\q)\le r_p(E).$
\end{proof}

\begin{prop} \label{4.2}
    Suppose that $[M:K]=p$. Fix $\chi_0\in\cC(K)$. Then there are infinitely many $\chi\in\cC(K)$ such that $r_p(E_1,\chi\chi_0)=r_p(E_1,\chi_0)+4$ and $r_p(E_2,\chi\chi_0)\le r_p(E_2,\chi_0)+2$.
\end{prop}

\begin{proof}
    Fix $\chi_0.$
    Let $\chi_1\in\cC(K)$ be the character as in Proposition \ref{4.1} and let $t\in\Sel_p(E_2/K,\chi_1\chi_0)$ be the element such that the fixed field $F$ of $\ker(\bar{t})$ is not contained in $K(\sqrt[p]{K^\times})$. Let $L$ be the Galois closure of $F$ over $K$. Let $L_{E_1}'$ be the fixed field of $\cap_{s\in\Sel_p(E_1/K,\chi_1\chi_0)}\ker({s})$ and let $N$ be the Galois closure of $L_{E_1}'K(\sqrt[p]{\O_{K,\Sigma(\chi_1\chi_0)}^\times})$ over $K$. Since $NM\subset K(\sqrt[p]{K^\times})$ and $L\not\subset K(\sqrt[p]{K^\times})$, $NM$ and $L$ are linearly disjoint. Choose a prime $\q\notin\Sigma(\chi_1\chi_0)$ such that $\Frob_\q|_{NM}=1$ and $\Frob_\q|_F\ne 1$. Then $\q\in\P_2(E_1)\cap\P_2(E_2)$, $\loc_\q(\Sel_p(E_1/K,\chi_1\chi_0))=0$ and $\loc_\q(\Sel_p(E_2/K,\chi_1\chi_0))\ne 0$. 
    Put $$\psi \in \prod_{v \in \Sigma(\chi_1\chi_0)} \Hom(K_v^\times, \mu_p) \times \prod_{v \notin \Sigma(\chi_1\chi_0)} \Hom(\O_v^\times, \mu_p)$$ so that 
    \begin{itemize}
        \item $\psi_v=1_v$ for $v\in \Sigma(\chi_1\chi_0)$,
        \item $\psi_\q$ is not trivial, and
        \item $\psi_\p$ is trivial for $\p\notin\Sigma(\chi_1\chi_0)\cup\{\q\}$.
    \end{itemize}
    Since $K(\sqrt[p]{\O_{K,\Sigma(\chi_1\chi_0)}^\times}) \subset N$ and $\Frob_\q|_{N}=1$, we have $\psi_\q(\O_{K,\Sigma(\chi_1\chi_0)}^\times)=1$. Hence, by Lemma \ref{cftlemma}, there exists $\chi \in \cC(K)$ such that \begin{itemize}
        \item $\chi_v=1_v$ for $v\in\Sigma(\chi_1\chi_0)$,
        \item $\chi_\q$ is ramified,
        \item $\chi_\p$ is unramified for $\p\notin\Sigma(\chi_1\chi_0)\cup\{\q\}$.
    \end{itemize} By the proof of Theorem \ref{firstthm}, \begin{align*}
        r_p(E_1,\chi_2\chi_1\chi_0)=r_p(E_1,\chi_1\chi_0)+2=r_p(E_1,\chi_0)+4.
        \end{align*}
    Since $\loc_\q(\Sel_2(E/K,\chi_1\chi_0))\ne 0$, by Lemma \ref{nonzeroloc}, one can show that
    $$r_p(E_2,\chi_2\chi_1\chi_0) \le r_p(E_2,\chi_1\chi_0)=r_p(E_2,\chi_0)+2.\qedhere$$
\end{proof}

\begin{thm} \label{2ndthm}
    Suppose that $E_1[p]\subset E_1(K)$ and that $M\ne K$. Then, for any positive integer $d$, there are infinitely many $\chi\in\cC(K)$ such that $$r_p(E_1,\chi)-r_p(E_2,\chi)\ge r_p(E_1)-r_p(E_2)+2d.$$
\end{thm}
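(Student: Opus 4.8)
The plan is to combine the three case-analysis propositions just proved (Propositions~\ref{4.1}--\ref{4.3}) into a single statement by splitting on the value of $[M:K]$, where $M = K(E_2[p])$ and $E_1[p]\subset E_1(K)$. Since $M\ne K$, there are exactly two possibilities: either $[M:K]=p$, or $[M:K]\nmid p$ (the case $[M:K]$ divisible by $p$ but not equal to $p$ also falls here, as does the case $p\nmid[M:K]$). In either case I want to produce, for every positive integer $d$, infinitely many $\chi\in\cC(K)$ for which $r_p(E_1/K,\chi)-r_p(E_2/K,\chi)$ exceeds $r_p(E_1/K)-r_p(E_2/K)$ by at least $2d$.

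First I would treat the case $[M:K]\nmid p$. Here Proposition~\ref{4.3}, applied with $\chi_0$ the trivial character and with the given $d$, immediately yields infinitely many $\chi\in\cC(K)$ with $r_p(E_1/K,\chi)=r_p(E_1/K)+2d$ and $r_p(E_2/K,\chi)=r_p(E_2/K)$; subtracting gives exactly the desired inequality (in fact equality). This case is essentially free.

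The case $[M:K]=p$ requires iterating Propositions~\ref{4.1} and~\ref{4.2}. The idea is to build up the character $\chi$ as a product $\chi = \chi_d\cdots\chi_1$ by applying Proposition~\ref{4.2} repeatedly: starting from $\chi_0=1$, each application of Proposition~\ref{4.2} (with the current partial product playing the role of $\chi_0$) multiplies by a new factor that raises $r_p(E_1/K,\cdot)$ by $4$ while raising $r_p(E_2/K,\cdot)$ by at most $2$, hence increasing the difference $r_p(E_1/K,\cdot)-r_p(E_2/K,\cdot)$ by at least $2$ at each step. After $d$ steps we gain at least $2d$ in the difference. I would want to check carefully that the hypotheses of Proposition~\ref{4.2} are genuinely stable under this iteration — the proposition is stated for an arbitrary fixed $\chi_0\in\cC(K)$, so formally each step is legitimate — and that the ``infinitely many'' conclusion persists, which it does since at the final step Proposition~\ref{4.2} itself already supplies infinitely many valid $\chi_d$. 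A bookkeeping point: I should track that $r_p(E_2/K,\chi)\le r_p(E_2/K)+2d$ throughout (each step adds at most $2$), so the lower bound on the difference is $r_p(E_1/K)+4d - (r_p(E_2/K)+2d) = r_p(E_1/K)-r_p(E_2/K)+2d$ as claimed.

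The main obstacle is making the iteration in the $[M:K]=p$ case fully rigorous: Proposition~\ref{4.2} guarantees $r_p(E_1/K,\chi\chi_0)=r_p(E_1/K,\chi_0)+4$ and $r_p(E_2/K,\chi\chi_0)\le r_p(E_2/K,\chi_0)+2$, but to chain $d$ applications I need to know that the quantity being incremented really is $r_p(E_i/K,\text{(current product)})$ and not something reset to the untwisted value — i.e. that ``$+4$'' and ``$\le +2$'' are measured relative to the incoming $\chi_0$, which is exactly how Proposition~\ref{4.2} is phrased. So the argument goes through cleanly; the only care needed is the inductive formulation, replacing $\chi_0$ at step $k$ by the product $\chi_{k-1}\cdots\chi_1\chi_0$ and verifying the cumulative bounds. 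I would also remark that when $d=1$ a single application of Proposition~\ref{4.1} already suffices, but using Proposition~\ref{4.2} uniformly is cleaner.
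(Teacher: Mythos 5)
Your proposal matches the paper's proof: split on $[M:K]=p$ versus $[M:K]\nmid p$, apply Proposition~\ref{4.3} directly in the latter case, and iterate Proposition~\ref{4.2} (replacing $\chi_0$ by the running product at each step) in the former, gaining at least $2$ in the difference $r_p(E_1/K,\cdot)-r_p(E_2/K,\cdot)$ per step, hence $\ge 2d$ after $d$ steps. One correction to your closing aside: a single application of Proposition~\ref{4.1} does \emph{not} suffice even for $d=1$, since it raises \emph{both} $r_p(E_1/K,\cdot)$ and $r_p(E_2/K,\cdot)$ by exactly $2$ and so leaves their difference unchanged; Proposition~\ref{4.1} serves only as the engine inside the proof of Proposition~\ref{4.2}, and it is Proposition~\ref{4.2} that is genuinely needed at every step.
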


\begin{proof}
    Suppose that $M\ne K$. If $[M:K]\nmid p$, apply Corollary \ref{cor1}. If $[M:K]\mid p$, apply Proposition \ref{4.2} and use induction. 
\end{proof}
\section{The case $E_i(K)[p]\cong \Z/p\Z$ for $i=1,2$}

For the rest of paper, let $M_1:=K(E_1[p])$ and let $M_2:=K(E_2[p])$. Note that $[M_i:K(\mu_p)]\mid p$ for $i=1,2$ if $E_i(K)[p]\cong \Z/p\Z.$

\begin{defn}
    Let $\tilde{t}$ denote the image of $t\in\Sel_p(E_2/K,\chi)$ in the restriction map $$\Sel_p(E_2/K,\chi) \hookrightarrow H^1(K,E_2[p]) \too \Hom(G_{M_1M_2},E_2[p]).$$
\end{defn}

\begin{prop} \label{5.1}
    Suppose that $M_1\ne M_2$ and $[M_1:K]\le [M_2:K]$.
    Fix $\chi_0\in\cC(K)$. Then there exist infinitely many $\chi\in\cC(K)$ such that \begin{enumerate}
        \item $r_p(E_1,\chi\chi_0)=r_p(E_1,\chi_0)+2$,
        \item $r_p(E_2,\chi\chi_0)=r_p(E_2,\chi_0)+2$,
        \item there exists $t\in\Sel_p(E_2/K,\chi\chi_0)$ so that the fixed field of $\ker(\tilde{t})$ is not contained in $M_1(\sqrt[p]{M_1^\times}).$ \label{kerprop2}
    \end{enumerate}
\end{prop}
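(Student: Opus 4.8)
The plan is to follow the architecture of Proposition~\ref{4.1}. Put $\Sigma(\chi_0):=\Sigma\cup\{\p\mid\cond(\chi_0)\}$ and let $N$ be the Galois closure over $K$ of $L_{E_1,\chi_0}L_{E_2,\chi_0}K(\sqrt[p]{\O_{K,\Sigma(\chi_0)}^\times})$, so that $N\supseteq M_1M_2$ and $N/K$ is finite. By the Chebotarev density theorem there are infinitely many primes $\q\notin\Sigma(\chi_0)$ with $\Frob_\q|_N=1$; for each such $\q$, Lemma~\ref{plem} gives $\q\in\P_2(E_1)\cap\P_2(E_2)$, and since $\Frob_\q$ is trivial on $L_{E_i,\chi_0}$ we have $\loc_\q(\Sel_p(E_i/K,\chi_0))=0$ for $i=1,2$. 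Because $E_i(K)[p]\cong\Z/p\Z\ne0$, running the character construction in the proof of Theorem~\ref{firstthm} (its input---$\chi$ trivial on $\Sigma(\chi_0)$, ramified at $\q$, unramified elsewhere---does not involve the curve, and its Selmer-rank computation applies to each $E_i$ once $\q\in\P_2(E_i)$ and $\loc_\q$ kills $\Sel_p(E_i/K,\chi_0)$) yields $\chi\in\cC(K)$, ramified exactly at $\q$ off $\Sigma(\chi_0)$, with $r_p(E_i/K,\chi\chi_0)=r_p(E_i/K,\chi_0)+2$ for $i=1,2$. This is (i) and (ii), and varying $\q$ gives infinitely many $\chi$.

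For (iii), let $L/K$ be the cyclic degree-$p$ extension cut out by $\chi\chi_0$; it is ramified at $\q$ since $\chi_0$ is unramified there and $\chi$ is ramified there. As $\Frob_\q|_{M_1M_2}=1$, the prime $\q$ splits completely in $M_1M_2$, so $G_{K_\q}\subseteq G_{M_1M_2}$; fix $\sigma\in G_{K_\q}$ whose image generates $G(L/K)=G(L_\q/K_\q)$. From $E_i(K)[p]\cong\Z/p\Z$, $M_1\ne M_2$, and $[M_1:K]\ge[M_2:K]$ one checks $[M_1:K(\mu_p)]=p$ and $M_1\cap M_2=K(\mu_p)$; in particular $M_1M_2/M_2$ is cyclic of degree $p$, so $M_1M_2\subseteq M_2(\sqrt[p]{M_2^\times})$ by Kummer theory, and $\bar s\in\Hom(G_{M_1},E_1[p])$ is defined for $s\in\Sel_p(E_1/K,\chi\chi_0)$. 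Now, by the exact sequence defining the strict Selmer group, together with $\Sel_p(E_1/K,\chi\chi_0)_\q=\Sel_p(E_1/K,\chi_0)$ and a dimension count using (ii), the map $\loc_\q\colon\Sel_p(E_1/K,\chi\chi_0)\to\gamma_\q((\chi\chi_0)_\q)$ is surjective; since $\gamma_\q((\chi\chi_0)_\q)=\Hom(G(L_\q/K_\q),E_1[p])$ (Lemma~\ref{4.6}, Remark~\ref{canlem}) is identified with $E_1[p]$ via $\phi\mapsto\phi(\sigma)$, we may choose $s\in\Sel_p(E_1/K,\chi\chi_0)$ with $\bar s(\sigma)=\loc_\q(s)(\sigma)\notin E_1(K)[p]$.

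It remains to show this $s$ satisfies (iii). Restriction of the cocycle makes $\bar s$ $G(M_1/K)$-equivariant, so $\ker\bar s\trianglelefteq G_K$, the field $F_s:=\overline{K}^{\ker\bar s}$ is Galois over $K$, and $\bar s(G_{M_1M_2})$ is a $G_K$-submodule of $E_1[p]$; since $[M_1:K(\mu_p)]=p$, the $G_K$-action on $E_1[p]$, in a basis extending a generator of the rational line $E_1(K)[p]$, runs through the full Borel, so the only $G_K$-submodules of $E_1[p]$ are $0$, $E_1(K)[p]$, and $E_1[p]$, and as $\bar s(G_{M_1M_2})\ni\bar s(\sigma)\notin E_1(K)[p]$ this forces $\bar s(G_{M_1M_2})=E_1[p]$. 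Suppose, for contradiction, $F_s\subseteq M_2(\sqrt[p]{M_2^\times})$. Then $\bar s|_{G_{M_1M_2}}$ factors through a $G_K$-equivariant surjection $\psi\colon\Gal(M_2(\sqrt[p]{M_2^\times})/M_1M_2)\twoheadrightarrow E_1[p]$. But $\Gal(M_2(\sqrt[p]{M_2^\times})/M_2)$ is abelian, so it acts trivially by conjugation on its subgroup $\Gal(M_2(\sqrt[p]{M_2^\times})/M_1M_2)$; hence $G_{M_2}$ acts trivially there, and by equivariance of $\psi$ it acts trivially on $E_1[p]$. Then $M_1=K(E_1[p])\subseteq M_2$, and together with $[M_1:K]\ge[M_2:K]$ this gives $M_1=M_2$, contradicting $M_1\ne M_2$. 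Therefore $F_s\not\subseteq M_2(\sqrt[p]{M_2^\times})$, which is (iii). The steps requiring the most care are the surjectivity of $\loc_\q$ at $\q$---which rests on (ii) and $\loc_\q(\Sel_p(E_1/K,\chi_0))=0$ and is what lets one produce a Selmer class whose value at $\sigma$ lies off the rational line, the reason the naive choice "$\bar s(\sigma)\ne0$" does not suffice---and the field-degree bookkeeping that pins down $[M_1:K(\mu_p)]=p$ and $M_1M_2\subseteq M_2(\sqrt[p]{M_2^\times})$ from the hypotheses; everything else transcribes directly from the preceding sections.
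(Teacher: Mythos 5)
Your proof is correct, and parts (i) and (ii) follow the paper verbatim (same auxiliary field $N$, same choice of $\q$ with $\Frob_\q|_N=1$, same appeal to the machinery of Theorem~\ref{firstthm}). For part~\ref{kerprop2}, however, you take a genuinely different route. The paper picks $s$ merely satisfying $\bar s(\sigma)\ne 0$ (obtained by showing $\loc_\q(\Sel_p(E_1/K,\chi\chi_0))\ne 0$) and then runs a cocycle identity in $\tau$ and $\sigma$ to derive a contradiction. You instead use the full surjectivity of $\loc_\q$ onto $\gamma_\q((\chi\chi_0)_\q)\cong E_1[p]$ (which the dimension count from (i) and $\Sel_p(E_1/K,\chi\chi_0)_\q=\Sel_p(E_1/K,\chi_0)$ certainly give) to pick $s$ with $\bar s(\sigma)\notin E_1(K)[p]$, observe that $\bar s(G_{M_1M_2})$ is then the whole $G_K$-module $E_1[p]$ (since $[M_1:K(\mu_p)]=p$ forces the unipotent in the image of $G_K\to\GL_2(\F_p)$, so the only $G_K$-submodules are $0$, $E_1(K)[p]$, $E_1[p]$), and then get a contradiction purely from $G_K$-equivariance: if $F_s\subseteq M_2(\sqrt[p]{M_2^\times})$ then $\bar s|_{G_{M_1M_2}}$ factors through the quotient $\Gal(M_2(\sqrt[p]{M_2^\times})/M_1M_2)$ on which $G_{M_2}$ acts trivially by conjugation, forcing $G_{M_2}$ to act trivially on $E_1[p]$ and hence $M_1\subseteq M_2$, contradicting $M_1\ne M_2$ and $[M_1:K]\ge[M_2:K]$. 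Your module-theoretic argument buys transparency: the paper's closing cocycle computation involves the operator $1+\tau+\cdots+\tau^{p-1}$, which already vanishes on $E_1[p]$ for a nontrivial unipotent $\tau$ when $p\ge 3$, so the identity there needs a more careful reading to see where the nontrivial content lies; your route avoids that delicacy entirely, at the mild cost of needing the sharper choice of $s$ rather than any $s$ with $\bar s(\sigma)\ne 0$. One small wording point: the image of $G_K$ in $\GL_2(\F_p)$ is contained in the Borel and contains the full unipotent, but it need not be the \emph{full} Borel (the top-left entry is $1$ since the rational line is fixed pointwise, and the bottom-right entry ranges over the image of the cyclotomic character, which may be a proper subgroup of $\F_p^\times$); what you actually need, and correctly use, is only that the nontrivial unipotent is in the image, so the list of $G_K$-submodules is as you state.
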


\begin{proof}
    Fix $\chi_0.$
    This follows by a similar argument as in the proof of Proposition \ref{4.1}. Let $N$ be the Galois closure of $L_{E_1,\chi_0}L_{E_2,\chi_0}K(\sqrt[p]{\O_{K,\Sigma(\chi_0)}^\times})$ over $K$. Choose a prime $\q\notin\Sigma(\chi_0)$ such that $\Frob_\q|_N=1$. Then $\q\in\P_2(E_1)\cap\P_2(E_2)$ and $\loc_\q(\Sel_p(E_i/K,\chi_0))=0$ for $i=1,2$. Then, as in the proof of Proposition \ref{4.1}, there exists a global character $\chi\in\cC(K)$ such that $\chi$ is ramified at $\q$ and that $r_p(E_i/K,\chi\chi_0)=r_p(E_i/K,\chi_0)+2$ for $i=1,2$. 
    
    Recall that we have the following exact sequence: $$0 \too \Sel_p(E_2/K,\chi_0)\too \Sel_p(E_2/K,\chi\chi_0) \too \Hom(G(L_\q/K_\q),E_2[p]) \too 0.$$ Let $t\in \Sel_p(E_2/K,\chi\chi_0)$ be an element such that $\image(\loc_\q(t))$ contains $P_2\in E_2[p]$ which is not in $E_2(K)[p]$. By a similar argument as in the proof of Proposition \ref{4.1}, since $\tilde{t}$ is $G_{M_1}$-equivariant (under the action of $G_K$), $\tilde{t}$ is a surjective map onto $E_2[p]$. Note that $G(M_1M_2/M_1)\cong G(M_2/M_1\cap M_2).$ Now, let $F$ be the fixed field of $\ker(\tilde{t})$. Then we have the following exact sequence $$0 \too E_2[p]\cong G(F/M_1M_2) \too G(F/M_1) \too G(M_1M_2/M_1) \too 0.$$ 
    Using a similar argument as in the proof of Proposition \ref{4.1}, one can show that $F/M_1$ is not abelian.
    Thus, $F\not\subset M_1(\sqrt[p]{M_1^\times}).$
\end{proof}

\begin{prop} \label{5.2}
    Suppose that $M_1\ne M_2$ and that $[M_1:K]\le[M_2:K].$ Fix $\chi_0\in\cC(K)$. Then there are infinitely many $\chi\in\cC(K)$ such that $$r_p(E_1,\chi\chi_0)=r_p(E_1,\chi_0)+4 \quad\text{and} \quad r_p(E_2,\chi\chi_0)\le r_p(E_2,\chi_0)+2.$$
\end{prop}

\begin{proof}
    Fix $\chi_0.$
    Let $\chi_1\in\cC(K)$ be the character as in Proposition \ref{5.1} and let $t\in\Sel_p(E_2/K,\chi_1\chi_0)$ be the element such that the fixed field $F$ of $\ker(\tilde{t})$ is not contained in $M_1(\sqrt[p]{M_1^\times})$. Let $L$ be the Galois closure of $F$ over $K$. Let $N$ be the Galois closure of $L_{E_1,\chi_1\chi_0}K(\sqrt[p]{\O_{K,\Sigma(\chi_1\chi_0)}^\times})$ over $K$. Since $NM_2\subset M_1(\sqrt[p]{M_1^\times})$ and $L\not\subset M_1(\sqrt[p]{M_1^\times})$, $NM_2$ and $L$ are linearly disjoint. Choose a prime $\q\notin\Sigma(\chi_1\chi_0)$ such that $\Frob_\q|_{NM_2}=1$ and $\Frob_\q|_F\ne 1$. Then $\q\in\P_2(E_1)\cap\P_2(E_2)$, $\loc_\q(\Sel_p(E_1/K,\chi_1\chi_0))=0$ and $\loc_\q(\Sel_p(E_2/K,\chi_1\chi_0))\ne0$. As in the proof of Proposition \ref{4.2}, let $\chi_2\in\cC(K)$ be the character such that \begin{itemize}
        \item $\chi_v=1_v$ for $v\in\Sigma(\chi_1\chi_0)$,
        \item $\chi_\q$ is ramified,
        \item $\chi_\p$ is unramified for $\p\notin\Sigma(\chi_1\chi_0)\cup\{\q\}$.
    \end{itemize} Then \begin{align*}
   &r_p(E_1,\chi_2\chi_1\chi_0)=r_p(E_1,\chi_0)+4\quad \text{and}\\&r_p(E_2,\chi_2\chi_1\chi_0) \le r_p(E_2,\chi_1\chi_0)=r_p(E_2,\chi_0)+2. \qedhere
    \end{align*} 
\end{proof}

\begin{thm} \label{3rdthm}
    Suppose that $M_1\ne M_2$ and $[M_1:K]\le [M_2:K]$. Then, for any positive integer $d$, there are infinitely many $\chi\in\cC(K)$ such that $$r_p(E_1,\chi)-r_p(E_2,\chi)\ge r_p(E_1)-r_p(E_2)+2d.$$
\end{thm}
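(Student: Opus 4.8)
The proof of Theorem \ref{3rdthm} should follow the exact template already used for Theorem \ref{2ndthm}: combine the two preceding propositions with an induction on $d$. The plan is to first observe that, under the standing hypotheses $M_1\ne M_2$ and $[M_1:K]\ge[M_2:K]$, Proposition \ref{5.2} produces, for any fixed $\chi_0\in\cC(K)$, infinitely many $\chi\in\cC(K)$ for which the quantity $r_p(E_2/K,\chi\chi_0)-r_p(E_1/K,\chi\chi_0)$ exceeds $r_p(E_2/K,\chi_0)-r_p(E_1/K,\chi_0)$ by at least $4-2=2$. This gives the base step ($d=1$, taking $\chi_0=1$) and, more importantly, the inductive mechanism.

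Next I would set up the induction on $d$. Suppose we have already found some $\chi_0\in\cC(K)$ with
\begin{equation*}
r_p(E_2/K,\chi_0)-r_p(E_1/K,\chi_0)\ge r_p(E_2/K)-r_p(E_1/K)+2d.
\end{equation*}
Applying Proposition \ref{5.2} to this $\chi_0$ yields infinitely many $\chi\in\cC(K)$ with $r_p(E_2/K,\chi\chi_0)=r_p(E_2/K,\chi_0)+4$ and $r_p(E_1/K,\chi\chi_0)\le r_p(E_1/K,\chi_0)+2$; for each such $\chi$, setting $\chi_0':=\chi\chi_0$ gives
\begin{equation*}
r_p(E_2/K,\chi_0')-r_p(E_1/K,\chi_0')\ge \bigl(r_p(E_2/K,\chi_0)-r_p(E_1/K,\chi_0)\bigr)+2\ge r_p(E_2/K)-r_p(E_1/K)+2(d+1),
\end{equation*}
which completes the inductive step. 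Since Proposition \ref{5.2} supplies infinitely many such $\chi$ at each stage (and distinct choices at the final stage produce distinct characters, because their conductors can be taken to involve new primes $\q$), we obtain infinitely many $\chi\in\cC(K)$ realizing the desired inequality for the given $d$.

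One should check two bookkeeping points. First, that Proposition \ref{5.1} (on which \ref{5.2} depends) only requires $M_1\ne M_2$ and $[M_1:K]\ge[M_2:K]$, both of which are preserved throughout the argument since $M_1,M_2$ do not depend on $\chi_0$ — so the hypotheses remain valid at every stage of the induction. Second, that the "infinitely many" claim genuinely propagates: at the last application of Proposition \ref{5.2}, infinitely many valid $\chi$ exist, each giving a character $\chi$ (or $\chi\chi_0$) of the required type, and these are pairwise distinct because one may always choose the auxiliary prime $\q\notin\Sigma(\chi_1\chi_0)$ outside any prescribed finite set, forcing the conductors to differ.

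The argument is entirely routine once Propositions \ref{5.1} and \ref{5.2} are in hand; there is no real obstacle. If anything, the only subtlety is the same one that already appears in Theorem \ref{2ndthm}: making sure the inductive hypothesis is stated in the twisted form "there exists $\chi_0$ with the gap $\ge r_p(E_2/K)-r_p(E_1/K)+2d$" rather than only in the untwisted form, so that Proposition \ref{5.2} can be re-applied to $\chi_0$. With that formulation the induction closes immediately.

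\begin{proof}
Apply Proposition \ref{5.2} and use induction on $d$, exactly as in the proof of Theorem \ref{2ndthm}. The base case $d=1$ is Proposition \ref{5.2} with $\chi_0=1$. For the inductive step, given $\chi_0\in\cC(K)$ with $r_p(E_2/K,\chi_0)-r_p(E_1/K,\chi_0)\ge r_p(E_2/K)-r_p(E_1/K)+2d$, Proposition \ref{5.2} produces infinitely many $\chi\in\cC(K)$ with $r_p(E_2/K,\chi\chi_0)=r_p(E_2/K,\chi_0)+4$ and $r_p(E_1/K,\chi\chi_0)\le r_p(E_1/K,\chi_0)+2$, so that $\chi\chi_0$ satisfies the inequality with $d$ replaced by $d+1$. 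Since the auxiliary prime in Proposition \ref{5.2} may be chosen outside any given finite set, the resulting characters are pairwise distinct, giving infinitely many $\chi\in\cC(K)$ as claimed. The hypotheses $M_1\ne M_2$ and $[M_1:K]\ge[M_2:K]$ are independent of the twist and hence remain valid at each stage.
\end{proof}
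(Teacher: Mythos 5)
Your proposal is correct and follows essentially the same route as the paper, whose entire proof is the one-liner ``Apply Propositions \ref{5.1} and \ref{5.2} and use induction.'' You have simply unpacked that line: Proposition \ref{5.2} (which internally invokes Proposition \ref{5.1}, valid under the standing hypotheses $M_1\ne M_2$ and $[M_1:K]\ge[M_2:K]$) advances the difference $r_p(E_2/K,\chi\chi_0)-r_p(E_1/K,\chi\chi_0)$ by at least $2$ at each stage, and since it supplies infinitely many such $\chi$ at the final stage, multiplication by the fixed $\chi_0$ yields infinitely many distinct characters achieving the bound for the given $d$. Your bookkeeping remarks (that the hypotheses are twist-independent, and that distinctness follows from the freedom to choose the auxiliary prime $\q$ outside any finite set) are exactly the implicit points the paper relies on.
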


\begin{proof}
    Suppose that $M_1 \ne M_2$. Apply Proposition \ref{5.2} and use induction.
\end{proof}
\section{The case $E_1(K)[p]\cong\Z/p\Z$ and $K(\mu_p)\subsetneq M_2.$}

Recall that if $E_1(K)[p]\cong\Z/p\Z$, then $[M_1:K(\mu_p)]\mid p$.

\begin{prop} \label{5ththm}
    Assume that $[M_2:K(\mu_p)]\nmid p$. Then, for any positive integer $d$, there exist infinitely many $\chi\in\cC(K)$ such that $$r_p(E_1,\chi)=r_p(E_1)+2d \quad\text{and} \quad r_p(E_2,\chi)=r_p(E_2).$$
\end{prop}

\begin{proof}
    We claim that for $\chi'\in\cC(K)$, there exist infinitely many $\chi\in\cC(K)$ such that \begin{align*}
        &r_p(E_1,\chi\chi')=r_p(E_1,\chi')+2\quad \text{and} \\
        &r_p(E_2,\chi\chi')=r_p(E_2,\chi').
    \end{align*}
    Then the result follows by induction.
    Fix $\chi'\in\cC(K)$. Let $\Sigma(\chi'):=\Sigma\cup\{\p\mid \cond(\chi')\}$.
    Let $\bar{s}$ denote the image of $s \in \Sel_{p}(E_1/K,\chi')$ in the restriction map $$\Sel_{p}(E_1/K,\chi') \hookrightarrow H^1(K,E_1[p]) \too \Hom(G_{M_1},E_1[p]).$$
    Let $N_{E_1}$ be the Galois closure of $L_{E_1,\chi'}K(\sqrt[p]{\O_{K,\Sigma(\chi')}^\times})$ over $K$. Observe that $N_{E_1}$ and $M_2$ are linearly disjoint.
    Choose a prime $\q\notin\Sigma(\chi')$ such that $\Frob_\q|_{N_{E_1}}=1$ and that $(\Frob_\q|_{M_2})^p\ne1$. Then $\q\in\P_2(E_1)\cap\P_0(E_2)$. As in the proof of Proposition \ref{4.3}, there exists $\chi \in \cC(K)$ such that \begin{itemize}
        \item $\chi_v=1_v$ for $v\in\Sigma(\chi')$,
        \item $\chi_\q$ is ramified,
        \item $\chi_\p$ is unramified for $\p\notin\Sigma(\chi')\cup\{\q\}$.
    \end{itemize} Then $r_p(E_1,\chi\chi')=r_p(E_1,\chi')+2$ and $r_p(E_2,\chi\chi')=r_p(E_2,\chi')$.
\end{proof}

\begin{prop}\label{pprop}
    Suppose that $M_1\ne M_2.$ Assume $[M_2:K(\mu_p)]= p.$ 
    Fix $\chi_0\in\cC(K)$. Then there exist infinitely many $\chi\in\cC(K)$ such that $$r_p(E_1,\chi\chi_0)=r_p(E_1,\chi_0)+4 \quad\text{and} \quad r_p(E_2,\chi\chi_0)\le r_p(E_2,\chi_0)+2.$$
\end{prop}

\begin{proof}
    Fix $\chi_0$. Let $N$ be the Galois closure of $L_{E_1,\chi_0}L_{E_2,\chi_0}K(\sqrt[p]{\O_{K,\Sigma(\chi_0)}^\times})$ over $K$. Choose a prime $\q\notin\Sigma(\chi_0)$ so that $\Frob_\q|_N=1.$ Then $\q\in\P_2(E_1)\cap\P_2(E_2)$ and $\loc_\q(\Sel_p(E_i/K,\chi_0))=0$ for $i=1,2$. Let ${\chi_1}$ be the global character such that \begin{itemize}
        \item ${\chi_1}_v=1_v$ for $v\in\Sigma(\chi_0)$,
        \item ${\chi_1}_\q$ is ramified,
        \item ${\chi_1}_\p$ is unramified for $\p\notin\Sigma(\chi_0)\cup\{\q\}$.
        \end{itemize} Since $E_1(K)[p]\ne 0$, we have that $r_p(E_1,{\chi_1}\chi_0)=r_p(E_1,\chi_0)+2$. By Theorem \ref{mod2} and Theorem \ref{dual}, we also have that $$r_p(E_2,{\chi_1}\chi_0)=r_p(E_2,\chi_0) \quad \text{or} \quad r_p(E_2,\chi_0)+2.$$
        If $r_p(E_2,{\chi_1}\chi_0)=r_p(E_2,\chi_0)$, repeating this process, we have that there exists $\chi\in\cC(K)$ such that 
    \begin{align*}
        &r_p(E_1,\chi\chi_1\chi_0)=r_p(E_1,\chi_0)+4 \quad \text{and}\\
        &r_p(E_2,\chi\chi_1\chi_0)\le r_p(E_2,\chi_0)+2.    
    \end{align*}
    Now, suppose that $r_p(E_2,\chi_1\chi_0)=r_p(E_2,\chi_0)+2.$
    Then, by Lemma \ref{4.6}, we have the following exact sequence:
    \begin{align*}
        0 \to \Sel_p(E_2/K,\chi_0) \to \Sel_p(E_2/K,\chi_1\chi_0) \to \Hom(G(L_\q/K_\q),E_2[p])\to 0.
    \end{align*}
    Let $\{P_1,P_2\}$ be a basis of $E_2[p]$ such that $E_2(K(\mu_p))=\langle P_1\rangle.$
    Let $t\in\Sel_p(E_2/K,\chi_1\chi_0)$ be such that $\image(\loc_\q(t))$ contains $P_2.$ Let $\tilde{t}$ be the image of $t$ in $H^1(M_1M_2,E_2[p])=\Hom(G_{M_1M_2},E_2[p]).$ Since $G(M_1M_2/M_1)\cong G(M_2/M_1\cap M_2)$ acts like $\begin{bmatrix}
        1 & 1 \\
        0 & 1
    \end{bmatrix}$ on $E_2[p]$ (with respect to basis $\{P_1, P_2\}$), and since $\tilde{t}$ is $G_K$-equivariant, $\tilde{t}$ is surjective.
    Let $F$ be the fixed field of $\ker(\tilde{t}).$ Then we have the following exact sequence $$0 \too E_2[p]\cong G(F/M_1M_2) \too G(F/M_1) \too G(M_1M_2/M_1) \too 0.$$ Note that $G(M_1M_2/M_1)\cong G(M_2/M_1\cap M_2).$
    Using a similar argument as in the proof of Proposition \ref{4.1}, one can show that $G(F/M_1)$ is not abelian. Thus, $F\not\subset M_1(\sqrt[p]{M_1^\times}).$ 
    Now, let $\chi':=\chi_1\chi_0$ and let $N$ be the Galois closure of $L_{E_1,\chi'} K( \sqrt[p]{\O_{K,\Sigma(\chi')}^\times}).$ Then $NM_2\subset M_1(\sqrt[p]{M_1^\times})$ and $F\not\subset M_1(\sqrt[p]{M_1^\times}).$ Choose $\q_1\notin\Sigma(\chi')$ such that $\Frob_{\q_1}|_{NM_2}=1$ and $\Frob_{\q_1}|_F\ne 1.$ Then $\loc_{\q_1}(\Sel_p(E_1/K,\chi'))=0$ and $\loc_{\q_1}(\Sel_p(E_2/K,\chi'))\ne 0.$ Let $\chi$ be the global character such that 
    \begin{itemize}
        \item ${\chi}_v=1_v$ for $v\in\Sigma(\chi')$,
        \item ${\chi}_{\q_1}$ is ramified,
        \item ${\chi}_\p$ is unramified for $\p\notin\Sigma(\chi')\cup\{\q\}$.
        \end{itemize}
    Then $r_p(E_1,\chi\chi')=r_p(E_1,\chi')+2$ and $r_p(E_2,\chi\chi')\le r_p(E_2,\chi').$
    Therefore, there exist infinitely many characters $\chi_2\in \cC(K)$ such that 
    \begin{align*}
        &r_p(E_1,\chi_2\chi_0)=r_p(E_1,\chi_0)+4 \quad \text{and}
        \\
        &r_p(E_2,\chi_2\chi_0)\le r_p(E_2,\chi_0)+2. \qedhere
    \end{align*}
\end{proof}

\begin{thm}
    Assume that $E_1(K)[p]\cong \Z/p\Z$ and $K(\mu_p)\subsetneq M_2$. Suppose that $M_1\ne M_2.$  Then, for any positive integer $d$, there are infinitely many $\chi\in\cC(K)$ such that $$r_p(E_1,\chi)-r_p(E_2,\chi)\ge r_p(E_1)-r_p(E_2)+2d.$$
\end{thm}
\begin{proof}
    If $[M_2:K(\mu_p)]\nmid p$, then apply Proposition \ref{5ththm} and use induction. If $[M_2:K(\mu_p)]=p$, apply Proposition \ref{pprop} and use induction.
\end{proof}

\section{The case $[M_1:K(\mu_p)], [M_2:K(\mu_p)]\nmid p$}

Recall that $M_i=K(E_i[p])$ for $i=1,2$. In this section, we assume that $[M_1:K(\mu_p)], [M_2:K(\mu_p)]\nmid p$.
\begin{defn}
    Define \begin{align*}
        \A_1:=& \ker(K^\times/{(K^\times)^p} \rightarrow M_1^\times/{(M_1^\times)^p}),\\
        \A_2:=& \ker(K^\times/{(K^\times)^p} \rightarrow M_2^\times/{(M_2^\times)^p}),\\
        \A_3:=&\ker(\O_{K,\Sigma}^\times/{(\O_{K,\Sigma}^\times)^p} \rightarrow \prod_{\q\in\P_0} \O_\q^\times/{(\O_\q^\times)^p}),
    \end{align*} where $\P_0:=\P_0(E_1)\cap\P_0(E_2)$.
\end{defn}

\begin{rem}
    Since there is a natural injection $\O_{K,\Sigma}^\times/{(\O_{K,\Sigma}^\times)^p} \rightarrow  K^\times/{(K^\times)^p}$, we identify $\O_{K,\Sigma}^\times/{(\O_{K,\Sigma}^\times)^p}$ with its image in $K^\times/{(K^\times)^p}$.
    By \cite[Lemma 6.2]{kmr1}, for $i=1,2$, $\A_i$ is generated by an element $\Delta_i\in\O_{K,\Sigma}^\times$.
\end{rem}

\begin{lem} \label{Alem}
    Suppose that $M_1 \ne M_2$. Then $\A_3\subset \A_1\A_2$.
\end{lem}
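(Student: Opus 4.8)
The plan is to show that every element of $\A_3$ actually lies in $\A_1\A_2$ by exploiting the structure of these kernels under Kummer theory together with the constraint coming from $\P_0 = \P_0(E_1)\cap\P_0(E_2)$. Since each $\A_i$ ($i=1,2$) is cyclic, generated by some $\Delta_i\in\O_{K,\Sigma}^\times$ (or is trivial), the group $\A_1\A_2$ has dimension at most $2$ as an $\F_p$-vector space inside $K^\times/(K^\times)^p$. So it suffices to prove $\dim_p\A_3 \le 2$ and that $\A_3$ is ``captured'' by the fields $M_1,M_2$. I would first reinterpret $\A_i$ via Kummer theory: $\A_i$ corresponds to the subgroup of $K^\times/(K^\times)^p$ whose associated degree-$p$ (or trivial) extensions $K(\sqrt[p]{\delta})$ are contained in $M_i$; since $[M_i:K(\mu_p)]\nmid p$ but $M_i/K(\mu_p)$ contains the Galois-stable line $E_i[p]$-fixed part, the relevant Kummer classes are exactly those pulled back from $E_i(K)[p]$, which is why $\A_i$ is at most one-dimensional.

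Next I would unwind the definition of $\A_3$. An element $\delta\in\O_{K,\Sigma}^\times$ lies in $\A_3$ precisely when $\delta$ is a $p$-th power in $\O_\q^\times$ for every $\q\in\P_0$, i.e. when $K(\sqrt[p]{\delta})/K$ is split (or trivial) at every prime in $\P_0$. By Lemma \ref{plem}, $\q\in\P_0(E_i)$ iff $\Frob_\q$ has order not dividing $p$ in $G(M_i/K)$; so $\q\in\P_0$ iff $\Frob_\q^p\ne 1$ in $G(M_1M_2/K)$ (more precisely, $\Frob_\q$ avoids the ``small'' conjugacy classes in both $G(M_1/K)$ and $G(M_2/K)$). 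The key point is then a Chebotarev argument: if $K(\sqrt[p]{\delta})$ were \emph{not} contained in $M_1M_2$, then by linear disjointness one could find, via Chebotarev, a prime $\q$ with $\Frob_\q$ trivial on $\sqrt[p]{\delta}$ being non-split while simultaneously lying in $\P_0(E_1)\cap\P_0(E_2)$ — contradicting $\delta\in\A_3$. Hence $K(\sqrt[p]{\delta})\subset M_1M_2$, and then a second argument (using that $\delta\in K^\times$, not merely $M_1M_2^\times$, and the $G_K$-action on the Kummer class) forces $K(\sqrt[p]\delta)\subset M_1$ or $\subset M_2$, i.e. $\delta\in\A_1\A_2$.

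Concretely, the steps in order: (1) recall $\A_i = \langle\Delta_i\rangle$ and translate $\A_1\A_2$ into a Kummer-theoretic subgroup of $K^\times/(K^\times)^p$; (2) fix $\delta\in\A_3$ and consider the field $K(\mu_p,\sqrt[p]{\delta})$, which is abelian of exponent $p$ over $K(\mu_p)$; (3) observe that being a local $p$-th power at all $\q\in\P_0$ means the Frobenius of such $\q$ kills $\delta$; (4) apply the Chebotarev density theorem to the compositum of $M_1M_2$ with $K(\mu_p,\sqrt[p]\delta)$ to show that if $\delta\notin (M_1M_2^\times)^p\cdot(\text{something in }M_1\text{ or }M_2)$ one produces a prime in $\P_0$ at which $\delta$ is not a $p$-th power; (5) deduce $K(\sqrt[p]\delta)\subseteq M_1$ or $\subseteq M_2$; (6) conclude $\delta\in\A_1\cup\A_2\subseteq\A_1\A_2$, so $\A_3\subseteq\A_1\A_2$. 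The hypothesis $M_1\ne M_2$ is used in step (5)/(6) to rule out the degenerate situation where $G(M_1M_2/K)$ collapses; and the standing assumption $[M_i:K(\mu_p)]\nmid p$ is what keeps each $\A_i$ at most one-dimensional so that the dimension count $\dim_p\A_1\A_2\le 2$ is available.

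The main obstacle I anticipate is step (5): passing from ``$K(\sqrt[p]\delta)\subset M_1M_2$'' to ``$K(\sqrt[p]\delta)\subset M_1$ or $K(\sqrt[p]\delta)\subset M_2$''. A priori a Kummer line inside $M_1M_2$ could be ``diagonal'', sitting in neither factor. To rule this out I would use that $\delta\in K^\times$ so the corresponding character $G_K\to\mu_p$ factors through $G(M_1M_2/K)$ and is fixed (up to the cyclotomic twist) by the relevant semidirect-product action; combined with the very constrained structure of $G(M_i/K)$ as a subgroup of $\GL_2(\F_p)$ when $[M_i:K(\mu_p)]\nmid p$ — in particular that the $G_K$-invariant homomorphisms $G(M_i/K)\to\mu_p$ are exactly those coming from $E_i(K)[p]$ — this pins the Kummer class down to one of the two fields. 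Getting this compatibility exactly right, tracking the $\mu_p$-twist throughout, is the delicate part; everything else is a fairly standard Chebotarev-plus-Kummer-theory manipulation.
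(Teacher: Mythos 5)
Your core idea---apply Chebotarev to the compositum of $M_1M_2$ with $K(\mu_p,\sqrt[p]{\delta})$ to manufacture a prime in $\P_0=\P_0(E_1)\cap\P_0(E_2)$ at which $\delta$ fails to be a local $p$-th power---is the engine of the paper's proof. But your plan inserts an intermediate goal, steps (5)--(6), that is both unnecessary and false, and this is where the genuine gap lies.

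You aim to deduce $K(\sqrt[p]{\delta})\subset M_1$ or $K(\sqrt[p]{\delta})\subset M_2$, i.e.\ $\delta\in\A_1\cup\A_2$. But the statement only asks for $\delta\in\A_1\A_2$, which is strictly larger than $\A_1\cup\A_2$: for instance $\Delta_1\Delta_2$ (when $\A_1\ne\A_2$ and both are nontrivial) lies in $\A_1\A_2$ but in neither $\A_i$, and $K(\mu_p,\sqrt[p]{\Delta_1\Delta_2})$ sits inside $M_1M_2$ but in neither $M_i$. Such an element can perfectly well lie in $\A_3$, so your step (5) cannot be salvaged; the ``delicate part'' you anticipate is not delicate but simply unattainable in general.

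The fix is to drop steps (5)--(6) altogether and run Chebotarev in the contrapositive with the correct subgroup from the outset, as the paper does: assume $x\in\A_3\setminus\A_1\A_2$ (rather than your vague ``$\delta\notin(M_1M_2^\times)^p\cdot(\dots)$''), observe that $K(\mu_p,\sqrt[p]{x})$ is then suitably independent of $M_1$ and $M_2$, and choose $\q\notin\Sigma$ with $\Frob_\q(\sqrt[p]{x})=\zeta\sqrt[p]{x}$ nontrivial and $(\Frob_\q|_{M_i})^p\ne 1$ for $i=1,2$ (the latter is possible because $[M_i:K(\mu_p)]\nmid p$). Then $\q\in\P_0$ while $x$ is not a local $p$-th power at $\q$, contradicting $x\in\A_3$. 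The subgroup $\A_1\A_2$ is exactly the right thing to put on the other side of the contrapositive; there is no need---and no way---to descend further to $\A_1\cup\A_2$.
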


\begin{proof}
    Let $x\in\A_3 \setminus \A_1\A_2$. Then, for $i=1,2$, $K(\mu_p,\sqrt[p]{x}),M_1$ and $M_2$ are linearly disjoint. Since $[M_i:K(\mu_p)]\nmid p$ for $i=1,2$, choose a prime $\q\notin\Sigma$ such that $\Frob_\q(\sqrt[p]{x})=\zeta\sqrt[p]{x}$ and that $(\Frob_\q|_{M_i})^p\ne1$ for $i=1,2$. Then $\q\in\P_0(E_1)\cap\P_0(E_2)$.
    However, since $x\in\A_3$, $\Frob_\q(\sqrt[p]{x})=\sqrt[p]{x}$. This is a contradiction.
\end{proof}

\begin{prop} \label{7prop1}
    Assume $[M_1:K(\mu_p)]\le [M_2:K(\mu_p)]$. Suppose $[M_2:M_1\cap M_2]\ne p$.
    Then, for any positive integer $d$, there exist infinitely many $\chi\in\cC(K)$ such that $$r_p(E_1,\chi)=r_p(E_1)+2d \quad\text{and} \quad r_p(E_2,\chi)=r_p(E_2).$$
\end{prop}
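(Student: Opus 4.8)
The plan is to imitate the proof of Theorem \ref{firstthm}, increasing $r_p(E_1/K,\chi)$ by $2$ at each step while keeping $r_p(E_2/K,\chi)$ fixed, and then induct on $d$. Concretely, I would fix $\chi'\in\cC(K)$ (to be $\chi_0$ at the first step, then the character produced at the previous step), set $\Sigma(\chi'):=\Sigma\cup\{\p\mid\cond(\chi')\}$, and form $N_{E_1}$, the Galois closure over $K$ of $L_{E_1,\chi'}K(\sqrt[p]{\O_{K,\Sigma(\chi')}^\times})$, as in Definition \ref{defnN}. The goal is to choose a prime $\q\notin\Sigma(\chi')$ with $\Frob_\q|_{N_{E_1}}=1$ — so that, as in Theorem \ref{firstthm}, the associated ramified character $\chi$ gives $r_p(E_1/K,\chi\chi')=r_p(E_1/K,\chi')+2$ and $\q\in\P_2(E_1)$ — while simultaneously forcing $\q\in\P_0(E_2)$, i.e. $(\Frob_\q|_{M_2})^p\ne 1$ by Lemma \ref{plem}; by Remark \ref{zerorem} this makes $\gamma_\q(1_\q)=\gamma_\q(\chi_\q)=0$ for $E_2$, so $\Sel_p(E_2/K,\chi\chi')=\Sel_p(E_2/K,\chi')$ and the $E_2$-rank is unchanged.

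The crux is therefore a linear-disjointness statement: I must check that the condition ``$\Frob_\q$ acts trivially on $N_{E_1}$'' is compatible with ``$\Frob_\q$ acts with order $>p$ on $M_2$'', so that Chebotarev produces infinitely many such $\q$. Since $N_{E_1}$ is a finite $p$-power extension of $M_1=K(E_1[p])$, and the Galois image on $E_1[p]$ is contained in $\left(\begin{smallmatrix}1&*\\0&*\end{smallmatrix}\right)$ (as $E_1(K)[p]\cong\Z/p\Z$), the obstruction to disjointness of $N_{E_1}$ and $M_2$ lives in abelian $p$-extensions of $M_1\cap M_2$. The relevant Kummer-theoretic input is exactly Lemma \ref{Alem}: because $[M_2:M_1\cap M_2]\ne p$ (this is where the hypothesis is used), $M_2$ contributes no ``new'' $p$-th-root direction that already sits inside $N_{E_1}$, so $\A_3\subset\A_1\A_2$ controls the $K(\sqrt[p]{\O_{K,\Sigma}^\times})$ part, and the hypothesis $[M_2:K(\mu_p)]\nmid p$ together with $[M_2:M_1\cap M_2]\ne p$ lets one separate the $M_2$-conjugacy data from $G(N_{E_1}/M_1\cap M_2)$. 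I expect this compatibility argument — carefully writing $N_{E_1}M_2$ as a compositum and checking that an element of $\Gal(N_{E_1}M_2/K)$ restricting to $1$ on $N_{E_1}$ can still restrict to an order-$>p$ element on $M_2$ — to be the main obstacle; everything downstream is a direct quotation of the proof of Theorem \ref{firstthm}.

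Once such $\q$ is found, I would invoke the proof of Theorem \ref{firstthm} verbatim: the Poitou--Tate duality sequences of Theorem \ref{dual} (with $\chi'$ in place of the trivial character), the fact that $\dim_p H^1(K_\q,E_1[p])=4$ for $\q\in\P_2(E_1)$, and the diagram-chase using $E_1^{\chi\chi'}(K)[\p]\ne 0$ (which holds since $E_1(K)[p]\ne 0$ forces a nonzero point in $E_1^{\chi\chi'}[\p]\cong E_1[p]$) to conclude $r_p(E_1/K,\chi\chi')=r_p(E_1/K,\chi')+2$ rather than merely $\le$. Then $r_p(E_1/K,\chi\chi')-r_p(E_1/K)$ has grown by $2$ and $r_p(E_2/K,\chi\chi')=r_p(E_2/K)$, and iterating $d$ times — at each stage enlarging $\Sigma$ to swallow the conductor of the accumulated character, which does not affect the disjointness input since all the new primes are harmless — yields infinitely many $\chi$ with $r_p(E_1/K,\chi)=r_p(E_1/K)+2d$ and $r_p(E_2/K,\chi)=r_p(E_2/K)$, as claimed. \qedhere
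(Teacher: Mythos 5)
There is a genuine gap in your approach, and it is not cosmetic. You write ``as $E_1(K)[p]\cong\Z/p\Z$,'' and later rely on ``$E_1(K)[p]\ne 0$ forces a nonzero point in $E_1^{\chi\chi'}[\p]$'' to run the diagram chase from Theorem~\ref{firstthm}. But in the setting of Proposition~\ref{7prop1} we are in the section where $[M_1:K(\mu_p)]\nmid p$ and $[M_2:K(\mu_p)]\nmid p$, and $[M_1:K(\mu_p)]\nmid p$ \emph{forces} $E_1(K)[p]=0$: if $E_1(K)[p]\ne 0$ then $G(M_1/K)$ fixes a vector, so $G(M_1/K(\mu_p))\subset\left(\begin{smallmatrix}1&*\\0&1\end{smallmatrix}\right)$ and $[M_1:K(\mu_p)]\mid p$. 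So there is no nonzero $P\in E_1^{\chi\chi'}(K)[\p]$, the final step of the proof of Theorem~\ref{firstthm} does not apply, and you only get $r_p(E_1/K,\chi\chi')\in\{r_p(E_1/K,\chi'),\,r_p(E_1/K,\chi')+2\}$ — not the exact increase of $2$ that the proposition requires. The paper replaces the global-point argument by a local one: it observes that among the local characters $\psi_\q\in\cC(K_\q)$ there are $(p-1)$ choices for which $r_p(E_1/K,\chi',\psi_\q)=r_p(E_1/K,\chi')+2$, and then builds the global $\chi$ so that $\chi_\q$ equals one of these prescribed $\psi_\q$'s.

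This points to the second gap: you want to realize a prescribed $\chi_\q$ globally while keeping $\Frob_\q|_{N_{E_1}}=1$, but your $N_{E_1}$ is the Galois closure of $L_{E_1,\chi'}K(\sqrt[p]{\O_{K,\Sigma(\chi')}^\times})$, which in general is \emph{not} linearly disjoint from $M_2$: the generator $\Delta_2$ of $\A_2=\ker(K^\times/(K^\times)^p\to M_2^\times/(M_2^\times)^p)$ lies in $\O_{K,\Sigma}^\times$, so $K(\mu_p,\sqrt[p]{\Delta_2})\subset N_{E_1}\cap M_2$ and this is typically a proper extension of $K(\mu_p)$. Then the Chebotarev condition ``$\Frob_\q$ trivial on $N_{E_1}$ and of order $>p$ on $M_2$'' may have empty support, and even when nonempty you cannot freely prescribe $\chi_\q$ while forcing $\chi$ unramified off $\Sigma(\chi')\cup\{\q\}$, since Lemma~\ref{cftlemma} imposes $\prod_v\chi_v(b)=1$ for $b\in\O_{K,\Sigma(\chi'),\q}^\times$. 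The paper handles both at once: it replaces $K(\sqrt[p]{\O_{K,\Sigma}^\times})$ by the \emph{single} field $K(\sqrt[p]{\Delta_2})$ (whose Galois closure $N$ \emph{is} linearly disjoint from $M_2$), and then relaxes the unramifiedness requirement at primes of $\P_0=\P_0(E_1)\cap\P_0(E_2)$ (harmless by Remark~\ref{zerorem}), using Lemma~\ref{elem} to reduce the existence of a suitable global $\chi$ to the single constraint $\psi(\A_3)=1$, which is exactly what Lemma~\ref{Alem} ($\A_3\subset\A_1\A_2$) supplies together with $\psi_\q(\Delta_1)=\psi_\q(\Delta_2)=1$. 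Without both of these adjustments your induction step does not go through.
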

\begin{proof}
    We claim that for $\chi'\in\cC(K)$, there exist infinitely many $\chi\in\cC(K)$ such that \begin{align*}
        &r_p(E_1,\chi\chi')=r_p(E_1,\chi')+2\quad \text{and} \\
        &r_p(E_2,\chi\chi')=r_p(E_2,\chi').
    \end{align*}
    Then the result follows by induction.

    Let $N$ be the Galois closure of $L_{E_1,\chi'}K(\sqrt[p]{\Delta_2})$ over $K$, where $\Delta_2$ is a generator of $\A_2$. Observe that $N$ and $M_2$ are linearly disjoint.
    Choose a prime $\q\notin\Sigma(\chi')$ such that $\Frob_{\q}|_{N}=1$ and $(\Frob_\q|_{M_2})^p\ne 1$. Then $\q\in\P_2(E_1)\cap\P_0(E_2)$ and $\loc_{\q}(\Sel_p(E_1/K,\chi'))=0$. Then, by \cite[Proposition 7.2]{kmr2}, there exist $(p-1)$-characters $\psi_{\q}\in\cC(K_{\q})$ such that $r_p(E_1,\chi',\psi_{\q})=r_p(E_1,\chi')+2$, where $\Sel_p(E_1,\chi',\psi_{\q})$ is defined by the following exact sequence: $$0 \rightarrow \Sel_p(E_1,\chi',\psi_{\q}) \rightarrow H^1(K,E_1[p]) \rightarrow {\frac{H^1(K_{\q},E_1[p])} {\gamma_{\q}(\chi'_{\q}\psi_{\q})}}\dirsum{v\ne\q}{\displaystyle\frac{H^1(K_v,E_1[p])} {\gamma_v(\chi'{_v})}}.$$
    Let $\Sigma(\chi',\q):=\Sigma(\chi')\cup\{\q\}$.
    Define \begin{itemize}
        \item $Q:=(\Sigma(\chi',\q)\cup\P_0)^c,$
        \item $J:=\O_{K,\Sigma(\chi',\q)}^\times,$
        \item $G:=\prod_{\p\in\P_0}\O_\p^\times,$
        \item $ H:=\prod_{\p\in Q} \O_\p^\times \times \prod_{v\in\Sigma(\chi',\q)}K_v^\times.$
    \end{itemize} By Lemma \ref{elem}, the image of map $$\cC(K) \too \prod_{\p\in Q}\Hom(\O_\p^\times, \mu_p) \times \prod_{v\in \Sigma(\chi',\q)} \Hom(K_v^\times, \mu_p)$$ is equal to $$\{f\in\prod_{\p \in Q} \Hom(\O_\p^\times, \mu_p) \times \prod_{v \in \Sigma(\chi',\q)} \Hom(K_v^\times, \mu_p) : f(\A_3)=1 \},$$ where $$\A_3:=\ker(\O_{K,\Sigma(\chi',\q)}^\times/{(\O_{K,\Sigma(\chi',\q)}^\times)^p} \rightarrow \prod_{\q\in\P_0} \O_\q^\times/{(\O_\q^\times)^p}).$$
    Let $$\psi=(\psi_v) \in \prod_{\p \in Q} \Hom(\O_\p^\times, \mu_p) \times \prod_{v \in \Sigma(\chi',\q)} \Hom(K_v^\times, \mu_p)$$ be such that
    \begin{itemize}
        \item $\psi_\p$ is trivial for $\p\in Q$,
        \item $\psi_v=1_v$ for $v\in\Sigma(\chi')$,
        \item $\psi_{\q}$ is the character chosen above.
    \end{itemize}
    Then $\psi(\Delta_i)=1$ for $i=1,2$. By Lemma \ref{Alem}, $\psi(\A_1\A_2)=1$ implies that $ \psi(\A_3)=1.$ Thus, there exists ${\chi} \in \cC(K)$ such that
    \begin{itemize}
        \item ${\chi}_\p$ is unramified for $\p\in Q$,
        \item ${\chi}_v=1_v$ for $v\in\Sigma(\chi')$,
        \item ${\chi}_{\q}=\psi_{\q}$.
    \end{itemize} 
    Then $\Sel_p(E_1/K,\chi\chi')=\Sel_p(E_1/K,\chi',\psi_\q)$. Thus, $$r_p(E_1,\chi\chi')=r_p(E_1,\chi')+2.$$ Since $\q\in\P_0(E_2)$, $r_p(E_2,\chi\chi')=r_p(E_2,\chi')$.
\end{proof}

\begin{prop} \label{7prop2}
    Assume $1\ne[M_1:M_1\cap M_2]< [M_2:M_1\cap M_2]=p$.
    Then, for any positive integer $d$, there exist infinitely many $\chi\in\cC(K)$ such that $$r_p(E_2,\chi)=r_p(E_2)+2d \quad\text{and} \quad r_p(E_1,\chi)=r_p(E_1).$$
\end{prop}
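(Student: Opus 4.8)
The plan is to mirror the inductive architecture used throughout the paper: it suffices to show that for any $\chi'\in\cC(K)$ there exist infinitely many $\chi\in\cC(K)$ with $r_p(E_2/K,\chi\chi')=r_p(E_2/K,\chi')+2$ and $r_p(E_1/K,\chi\chi')=r_p(E_1/K,\chi')$, after which the full statement follows by induction on $d$. To do this we need a prime $\q\notin\Sigma(\chi')$ lying in $\P_2(E_2)\cap\P_0(E_1)$ — so that twisting at $\q$ can raise the $E_2$-Selmer rank (via the mechanism of Theorem \ref{firstthm}, which applies since $E_2(K)[p]\ne 0$ whenever $[M_2:K(\mu_p)]$ is a proper divisor structure forcing a fixed point, though in fact here we only need $\q\in\P_2(E_2)$ to run that argument) while leaving the $E_1$-Selmer group untouched because $\q\in\P_0(E_1)$ makes all local conditions at $\q$ vanish (Remark \ref{zerorem}).

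First I would use Lemma \ref{plem} to translate the Chebotarev conditions into statements about $\Frob_\q$ in $G(M_1M_2/K)$: I want $\Frob_\q|_{M_2}=1$ (to get $\q\in\P_2(E_2)$) and $(\Frob_\q|_{M_1})^p\ne 1$ (to get $\q\in\P_0(E_1)$). The compatibility of these two conditions is exactly where the hypothesis $1\ne[M_1:M_1\cap M_2]<[M_2:M_1\cap M_2]=p$ enters. Since $\Frob_\q|_{M_2}=1$ forces $\Frob_\q$ to lie in $G(M_1M_2/M_2)\cong G(M_1/M_1\cap M_2)$, and the latter group has order $[M_1:M_1\cap M_2]\in\{2,\dots,p-1\}$ (a proper nontrivial divisor-type quantity, in particular not divisible by $p$), no nontrivial element of it has $p$-th power equal to the identity automatically — rather, \emph{every} nontrivial element $g$ of a group of order coprime to $p$ satisfies $g^p\ne 1$. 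So I would pick $\Frob_\q$ to be any nontrivial element of $G(M_1/M_1\cap M_2)$, viewed inside $G(M_1M_2/K)$ via the isomorphism with $G(M_1M_2/M_2)$; this simultaneously gives $\Frob_\q|_{M_2}=1$ and $(\Frob_\q|_{M_1})^p\ne 1$. One also must incorporate the auxiliary field $N$, the Galois closure of $L_{E_2,\chi'}K(\sqrt[p]{\O_{K,\Sigma(\chi')}^\times})$ over $K$, which is a $p$-power extension of $M_2$; since $[M_1:M_1\cap M_2]$ is prime to $p$, the field $M_1$ is linearly disjoint from $N$ over $M_1\cap M_2$, so the condition $\Frob_\q|_N=1$ (needed to kill $\loc_\q$ on $\Sel_p(E_2/K,\chi')$ and apply Theorem \ref{firstthm}) is compatible with the above choice on $M_1$. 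Chebotarev then produces infinitely many such $\q$.

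Given such a $\q$, I would build the global character $\chi$ exactly as in the proof of Theorem \ref{firstthm} (or as in Proposition \ref{7prop1}, using Lemma \ref{cftlemma} and Lemma \ref{elem} to realize the prescribed ramified local character $\psi_\q$ at $\q$ and trivial local behavior elsewhere in $\Sigma(\chi')$); since $\q\in\P_2(E_2)$ the Poitou–Tate duality computation in Theorem \ref{firstthm} yields a point $P\in E_2^{\chi\chi'}(K)[\p]$ whose class has nonzero image under $\loc_\q$, forcing $r_p(E_2/K,\chi\chi')=r_p(E_2/K,\chi')+2$. Meanwhile, because $\q\in\P_0(E_1)$, Remark \ref{zerorem} gives $\gamma_\q^1(1_\q)=\gamma_\q^1(\chi_\q)=0$, so adjoining $\q$ does not change the $E_1$-Selmer condition at all and $r_p(E_1/K,\chi\chi')=r_p(E_1/K,\chi')$. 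The main obstacle is the bookkeeping in the previous paragraph — verifying that the three requirements on $\Frob_\q$ (namely the behavior on $M_1$, on $M_2$, and on $N$) are jointly realizable — which rests entirely on the prime-to-$p$ nature of $[M_1:M_1\cap M_2]$; once that linear disjointness is in hand, the Selmer-rank computations are routine applications of the machinery already assembled in Sections 3 and 4.
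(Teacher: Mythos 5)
Your Chebotarev step is correct, and in fact a little cleaner than the paper's: you take $N$ to be the Galois closure of $L_{E_2,\chi'}K(\sqrt[p]{\O_{K,\Sigma(\chi')}^\times})$, which is a $p$-power extension of $M_2$, and you correctly observe that since $[M_1:M_1\cap M_2]$ is a nontrivial proper divisor of $p!$ (in particular coprime to $p$) one has $N\cap M_1=M_1\cap M_2$, so $G(NM_1/N)\cong G(M_1/M_1\cap M_2)$ has order coprime to $p$ and any nontrivial element of it simultaneously gives $\Frob_\q|_N=1$ and $(\Frob_\q|_{M_1})^p\ne 1$, hence $\q\in\P_2(E_2)\cap\P_0(E_1)$ and $\loc_\q(\Sel_p(E_2/K,\chi'))=0$. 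The paper instead takes the smaller field $L_{E_2,\chi'}K(\sqrt[p]{\Delta_1})$ and compensates with the $\A_1,\A_2,\A_3$ machinery (Lemmas \ref{elem} and \ref{Alem}), which permits ramification at primes of $\P_0$; your choice lets you invoke the plain class field theory statement (Lemma \ref{cftlemma}) and, as a bonus, gives you the freedom to realize \emph{any} local character $\psi_\q\in\cC(K_\q)$ globally, trivial at $\Sigma(\chi')$ and unramified elsewhere.

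The gap is in the rank-raising step. You write that ``the Poitou--Tate duality computation in Theorem \ref{firstthm} yields a point $P\in E_2^{\chi\chi'}(K)[\p]$ whose class has nonzero image under $\loc_\q$.'' That final step of Theorem \ref{firstthm} uses the hypothesis $E(K)[p]\ne 0$ --- one needs $E_2^{\chi\chi'}(K)[\p]\cong E_2(K)[p]$ to be nonzero in order to produce the point $P$ --- and this hypothesis is \emph{not} available in Section 8. Indeed, under the standing assumption $[M_2:K(\mu_p)]\nmid p$ and the Proposition's hypothesis $[M_2:M_1\cap M_2]=p$, the group $G(M_2/K(\mu_p))$ has order a proper multiple of $p$, and nothing forces it to fix a vector of $E_2[p]$; so $E_2(K)[p]$ may well vanish. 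Your own parenthetical hedge (``whenever $[M_2:K(\mu_p)]$ is a proper divisor structure forcing a fixed point, though in fact here we only need $\q\in\P_2(E_2)$'') is a tell: $\q\in\P_2(E_2)$ alone is \emph{not} enough to run the rational-torsion argument of Theorem \ref{firstthm}. What the paper actually uses here is a different mechanism (asserted without proof in Proposition \ref{7prop1}): for $\q\in\P_2(E_2)$ with $\loc_\q(\Sel_p(E_2/K,\chi'))=0$, there exists a \emph{ramified} $\psi_\q\in\cC(K_\q)$ with $r_p(E_2/K,\chi',\psi_\q)=r_p(E_2/K,\chi')+2$. This can be seen without any assumption on $E_2(K)[p]$: Poitou--Tate applied to the pair $(\Sel_p(E_2/K,\chi')^\q,\Sel_p(E_2/K,\chi')_\q)$ shows that $\loc_\q(\Sel_p(E_2/K,\chi')^\q)$ is a $2$-dimensional self-annihilating (Lagrangian) subspace of the $4$-dimensional $H^1(K_\q,E_2[p])$, transverse to $\Hu(K_\q,E_2[p])$; since the local pairing on $H^1(K_\q,E_2[p])$ is symmetric, the Lagrangians transverse to $\Hu$ are exactly the $p$ subspaces $\gamma_\q(\psi_\q)$ for ramified $\psi_\q$ (Lemma \ref{4.6}), so $\loc_\q(\Sel_p(E_2/K,\chi')^\q)$ coincides with one of them, and for that $\psi_\q$ the relaxed Selmer group is the whole of $\Sel_p(E_2/K,\chi')^\q$. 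Since your construction lets you realize any prescribed $\psi_\q$ globally, replacing the appeal to Theorem \ref{firstthm} with this observation (or with an explicit citation for it) closes the gap; as written, the appeal to a point of $E_2^{\chi\chi'}(K)[\p]$ is unjustified.
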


\begin{proof}
    Fix $\chi'$. The proof is similar to the proof of Proposition \ref{7prop1}.
    Let $N$ be the Galois closure of $L_{E_2,\chi'}K(\sqrt[p]{\Delta_1})$ over $K$, where $\Delta_1$ is a generator of $\A_1$. Observe that $N$ and $M_1$ are linearly disjoint.
    Choose a prime $\q\notin\Sigma(\chi')$ such that $\Frob_{\q}|_{N}=1$ and $(\Frob_\q|_{M_1})^p\ne 1$. Then $\q\in\P_0(E_1)\cap\P_2(E_2)$ and $\loc_{\q}(\Sel_p(E_2/K,\chi'))=0$. By \cite[Proposition 7.2]{kmr2}, choose a local character $\psi_\q$ so that $r_p(E_2,\chi',\psi_\q)=r_p(E_2,\chi')+2$. Since $\q\in\P_0(E_1)$, $r_p(E_1,\chi',\psi_\q)=r_p(E_1,\chi')$. Since $\psi_\q(\Delta_i)=1$ for $i=1,2$, as in the proof of Proposition \ref{7prop1}, let $\chi$ be the global character such that \begin{itemize}
        \item ${\chi}_\p$ is unramified for $\p\in Q$,
        \item ${\chi}_v=1_v$ for $v\in\Sigma(\chi')$,
        \item ${\chi}_{\q}=\psi_{\q}$.
    \end{itemize} Then $r_p(E_2,\chi'\chi)=r_p(E_2,\chi')+2$ and $r_p(E_1,\chi'\chi)= r_p(E_1,\chi').$
\end{proof}

\begin{prop} \label{lastprop}
    Suppose that $M_1 \ne M_2$ and that $[M_i:K(\mu_p)]\nmid p$ for $i=1,2.$ Assume that $[M_2 : M_1\cap M_2]=p$ and that $[M_1:M_1\cap M_2]\mid p.$  Fix $\chi_0.$ Then there exist infinitely many characters $\chi\in\cC(K)$ such that $$r_p(E_1,\chi\chi_0)=r_p(E_1,\chi_0)+4 \quad \text{and}\quad r_p(E_2,\chi\chi_0)\le r_p(E_2,\chi_0)+2.$$ 
\end{prop}

\begin{proof}
    Fix $\chi_0$. Let $N$ be the Galois closure of $L_{E_1,\chi_0}L_{E_2,\chi_0}K(\sqrt[p]{\Delta_1},\sqrt[p]{\Delta_2})$ over $K$. Choose a prime $\q\notin\Sigma(\chi_0)$ such that $\Frob_\q|_N=1.$ Then $\q\in\P_2(E_1)\cap\P_2(E_2)$ and $\loc_\q(\Sel_p(E_i/K,\chi_0))=0$ for $i=1,2.$ Then, by \cite[Proposition 7.2]{kmr2}, choose a local character $\psi_\q$ so that $r_p(E_1,\chi_0,\psi_\q)=r_p(E_1,\chi_0)+2$. Since $\psi_\q(\Delta_i)=1$ for $i=1,2$, as in the proof of Proposition \ref{7prop1}, let $\chi_1$ be the global character such that \begin{itemize}
        \item ${\chi_1}_\p$ is unramified for $\p\in Q$,
        \item ${\chi_1}_v=1_v$ for $v\in\Sigma(\chi_0)$,
        \item ${\chi_1}_{\q}=\psi_{\q}$.
    \end{itemize} Then $r_p(E_1,\chi_1\chi_0)=r_p(E_1,\chi_0)+2.$ By Theorem \ref{mod2} and Theorem \ref{dual}, either $$r_p(E_2,\chi_1\chi_0)=r_p(E_2,\chi_0) \quad\text{or}\quad r_p(E_2,\chi_1\chi_0)=r_p(E_2,\chi_0)+2.$$
    Suppose that $r_p(E_2,\chi_1\chi_0)=r_p(E_2,\chi_0).$ Repeating this process, we have that there exists $\chi\in\cC(K)$ such that 
    \begin{align*}
        &r_p(E_1,\chi\chi_0)=r_p(E_1,\chi_0)+4 \quad \text{and}\\
        &r_p(E_2,\chi\chi_0)\le r_p(E_2,\chi_0)+2.    
    \end{align*}
    Now, suppose that $r_p(E_2,\chi_1\chi_0)=r_p(E_2,\chi_0)+2.$ Then, by Lemma \ref{4.6}, we have the following exact sequence:
    \begin{align*}
        0 \to \Sel_p(E_2/K,\chi_0) \to \Sel_p(E_2/K,\chi_1\chi_0) \to \Hom(G(L_\q/K_\q),E_2[p])\to 0.
    \end{align*}
    Let $\{P_1,P_2\}$ be a basis of $E_2[p]$ such that $E_2(M_1\cap M_2)=\langle P_1\rangle.$
    Let $t\in\Sel_p(E_2/K,\chi_1\chi_0)$ be such that $\image(\loc_\q(t))$ contains $P_2.$ Let $\tilde{t}$ be the image of $t$ in $H^1(M_1M_2,E_2[p])=\Hom(G_{M_1M_2},E_2[p]).$ Since $G(M_1M_2/M_1)\cong G(M_2/M_1\cap M_2)$ acts like $\begin{bmatrix}
        1 & 1 \\
        0 & 1
    \end{bmatrix}$ on $E_2[p]$ (with respect to basis $\{P_1, P_2\}$), and since $\tilde{t}$ is $G_K$-equivariant, $\tilde{t}$ is surjective.
    Let $F$ be the fixed field of $\ker(\tilde{t}).$ Then we have the following exact sequence $$0 \too E_2[p]\cong G(F/M_1M_2) \too G(F/M_1) \too G(M_1M_2/M_1) \too 0.$$ Note that $G(M_1M_2/M_1)\cong G(M_2/M_1\cap M_2).$
    Using a similar argument as in the proof of Proposition \ref{4.1}, one can show that $G(F/M_1)$ is not abelian. Thus, $F\not\subset M_1(\sqrt[p]{M_1^\times}).$ 
    Now, let $\chi':=\chi_1\chi_0$ and let $N$ be the Galois closure of $L_{E_1,\chi'} K( \sqrt[p]{\Delta_2}).$ Then $NM_2\subset M_1(\sqrt[p]{M_1^\times})$ and $F\not\subset M_1(\sqrt[p]{M_1^\times}).$ Choose $\q_1\notin\Sigma(\chi')$ such that $\Frob_{\q_1}|_{NM_2}=1$ and $\Frob_{\q_1}|_F\ne 1.$ Then $\loc_{\q_1}(\Sel_p(E_1/K,\chi'))=0$ and $\loc_{\q_1}(\Sel_p(E_2/K,\chi'))\ne 0.$ Choose a local character $\psi_{\q_1}$ such that $r_p(E_1,\chi',\psi_{\q_1})=r_p(E_1,\chi')+2$ and $r_p(E_2,\chi',\psi_{\q_1})\le r_p(E_2,\chi').$ Since $\psi_{\q_1}(\Delta_i)=1$ for $i=1,2$, let $\chi$ be the global character such that \begin{itemize}
        \item ${\chi}_\p$ is unramified for $\p\in Q$,
        \item ${\chi}_v=1_v$ for $v\in\Sigma(\chi')$,
        \item ${\chi}_{\q_1}=\psi_{\q_1}$.
    \end{itemize} 
    Then $r_p(E_1,\chi\chi')=r_p(E_1,\chi')+2$ and $r_p(E_2,\chi\chi')\le r_p(E_2,\chi').$
    Therefore, there exist infinitely many characters $\chi\in \cC(K)$ such that $$r_p(E_1,\chi\chi_0)=r_p(E_1,\chi_0)+4 \quad \text{and}\quad r_p(E_2,\chi\chi_0)\le r_p(E_2,\chi_0)+2.$$
\end{proof}

\begin{thm} \label{thm4}
    Suppose that $M_1\ne M_2$, and that $[M_i:K(\mu_p)]\ne p$ for $i=1,2$. Then, for any positive integer $d$, there are infinitely many $\chi\in\cC(K)$ such that $$|r_p(E_1,\chi)-r_p(E_2,\chi)|\ge |r_p(E_1)-r_p(E_2)|+2d.$$
\end{thm}

\begin{proof}
    Assume that $[M_1:K(\mu_p)] \le [M_2:K(\mu_p)].$ If $[M_2:M_1\cap M_2]\ne p$, then apply Proposition \ref{7prop1} and use induction. Suppose that $[M_2:M_1\cap M_2]=p$. If $[M_1:M_1\cap M_2]\nmid p$, apply Proposition \ref{7prop2} and use induction. If $[M_1:M_1\cap M_2]\mid p$, apply Proposition \ref{lastprop} and use induction. 
\end{proof}

\section*{Acknowledgments}
The author is grateful to his PhD advisor, Myungjun Yu, for the guidance and assistance provided during the course of this research. The author also thanks Hyungmin Jang for many helpful discussions and valuable comments. Minseok Kim was supported by the National Research Foundation of Korea (NRF) grant funded by the Korea government (MSIT) (RS-2025-23525445).


\begin{thebibliography}{10}

\bibitem{sscc}
C.-H. Chiu.
\newblock Strong {S}elmer companion elliptic curves.
\newblock {\em J. Number Theory}, 217:376--421, 2020.

\bibitem{msk}
M.~Kim.
\newblock Increasing the {$p$}-{S}elmer rank by twisting.
\newblock {\em Int. J. Number Theory}, 21(10):2573--2585, 2025.

\bibitem{kmr1}
Z.~Klagsbrun, B.~Mazur, and K.~Rubin.
\newblock Disparity in {S}elmer ranks of quadratic twists of elliptic curves.
\newblock {\em Ann. of Math. (2)}, 178(1):287--320, 2013.

\bibitem{kmr2}
Z.~Klagsbrun, B.~Mazur, and K.~Rubin.
\newblock A {M}arkov model for {S}elmer ranks in families of twists.
\newblock {\em Compos. Math.}, 150(7):1077--1106, 2014.

\bibitem{kolysys}
B.~Mazur and K.~Rubin.
\newblock Kolyvagin systems.
\newblock {\em Mem. Amer. Math. Soc.}, 168(799):viii+96, 2004.

\bibitem{alc}
B.~Mazur and K.~Rubin.
\newblock Finding large {S}elmer rank via an arithmetic theory of local constants.
\newblock {\em Ann. of Math. (2)}, 166(2):579--612, 2007.

\bibitem{scc}
B.~Mazur and K.~Rubin.
\newblock Selmer companion curves.
\newblock {\em Trans. Amer. Math. Soc.}, 367(1):401--421, 2015.

\bibitem{ds}
B.~Mazur and K.~Rubin.
\newblock Diophantine stability.
\newblock {\em Amer. J. Math.}, 140(3):571--616, 2018.
\newblock With an appendix by Michael Larsen.

\bibitem{MRS}
B.~Mazur, K.~Rubin, and A.~Silverberg.
\newblock Twisting commutative algebraic groups.
\newblock {\em J. Algebra}, 314(1):419--438, 2007.

\bibitem{serre}
J.-P. Serre.
\newblock Propri\'et\'es galoisiennes des points d'ordre fini des courbes elliptiques.
\newblock {\em Invent. Math.}, 15(4):259--331, 1972.

\bibitem{2snc}
M.~Yu.
\newblock 2-{S}elmer near-companion curves.
\newblock {\em Trans. Amer. Math. Soc.}, 372(1):425--440, 2019.

\end{thebibliography}
\end{document}